\documentclass[reqno]{amsart}

\usepackage{amssymb,amsfonts,amsmath,amsbsy,amscd}
\usepackage{amsthm}
\usepackage{dsfont}

\pagestyle{headings}

\newcommand{\ol}[1]{\overline{#1}}

\numberwithin{equation}{section}

\newcommand{\R}{\ensuremath{\mathbb{R}}}

\newcommand{\Rn}{\ensuremath{\mathbb{R}^n}}

\newcommand{\N}{\ensuremath{\mathbb{N}}}

\newcommand{\dist}{\operatorname{dist}}
\newcommand{\Id}{\operatorname{Id}}
\newcommand{\sym}{\operatorname{sym}}

\newcommand{\sd}{\, d}

\newcommand{\eps}{\ensuremath{\varepsilon}}
\newcommand{\weight}[1]{\langle #1\rangle}

\newcommand{\Div}{\operatorname{div}}

\newcommand{\Tr}{\operatorname{Tr}}

\newcommand{\E}{\mathbb{E}}

\newcommand{\STC}{\sigma}
\newcommand{\Jump}[1]{[\![ #1]\!]}
\newcommand{\MC}{H}

\newcommand{\sx}{{\sf x}}
\newcommand{\sy}{{\sf y}}
\newcommand{\U}{\mathcal{U}}

\newenvironment{proof*}[1]{{\noindent\bf Proof
#1:}}{\hspace*{\fill}\rule{1.2ex}{1.2ex}\\ }



\overfullrule=1pt

\numberwithin{equation}{section}

\newtheorem{theorem}{Theorem}[section]
\newtheorem{proposition}[theorem]{Proposition}
\newtheorem{corollary}[theorem]{Corollary}
\newtheorem{lemma}[theorem]{Lemma}

\theoremstyle{definition}

\theoremstyle{definition} 
\newtheorem{remark}[theorem]{Remark}



\DeclareMathOperator{\diver}{div}
\DeclareMathOperator{\dive}{div}



\newcommand{\beq}{\begin{equation}}
\newcommand{\eeq}{\end{equation}}
\newcommand{\bea}{\begin{eqnarray}}
\newcommand{\eea}{\end{eqnarray}}

\newcommand{\cA}{{\mathcal A}}

\newcommand{\cE}{{\mathcal E}}


\def\eps{\varepsilon}


\DeclareMathSymbol{\complement}{\mathord}{AMSa}{"7B}


\def\vv<#1>{\langle #1\rangle}
\def\Vv<#1>{\bigl\langle #1\bigr\rangle}


\begin{document}

\setlength{\arraycolsep}{0.5ex}

\title[Two-Phase Navier-Stokes-Mullins-Sekerka Equations]
{Well-Posedness and Qualitative Behaviour of Solutions for a
Two-Phase Navier-Stokes-Mullins-Sekerka System}

\author[Helmut Abels]{Helmut Abels}
\address{Fakult\"at f\"ur Mathematik \\  Universit\"at Regensburg  \\ Universit\"atsstra\ss e 31\\  D-93053 Regensburg, Germany}
\email{helmut.abels@mathematik.uni-regensburg.de}

\author[Mathias Wilke]{Mathias Wilke}
\address{Institut f\"ur Mathematik  \\
         Martin-Luther-Universit\"at Halle-Witten\-berg\\
         Theodor-Lieser-Str.~5\\
         D-06120 Halle, Germany}
\email{mathias.wilke@mathematik.uni-halle.de}

\date{\today}


\begin{abstract}
We consider a two-phase problem for two incompressible, viscous and immiscible fluids which are separated by a sharp interface. The problem arises as a sharp interface limit of a diffuse interface model. We present results on local existence of strong solutions and on the long-time behavior of solutions which start close to an equilibrium. To be precise, we show that as time tends to infinity, the velocity field converges to zero and the interface converges to a sphere at an exponential rate.
\end{abstract}
\maketitle
{\small\noindent
{\bf Mathematics Subject Classification (2000):}\\
Primary: 35R35; Secondary  35Q30, 76D27,  76D45, 76T99.\vspace{0.1in}\\

{\bf Key words:} Two-phase flow, Navier-Stokes system, Free boundary problems,
  Mullins-Sekerka equation, convergence to equilibria. \vspace{0.1in}\\
\date{today}}


\section{Introduction}
We study the flow of two incompressible, viscous and immiscible fluids inside a bounded domain $\Omega\subset \Rn$,
$n=2,3$. The fluids fill domains $\Omega^+(t)$ and $\Omega^-(t)$, $t>0$,
respectively, with a common interface $\Gamma(t)$ between both
fluids. The flow is
described in terms of the velocity $v\colon  (0,\infty)\times \Omega \to \R^n$ and
the pressure $p\colon (0,\infty)\times \Omega  \to \R$ in both fluids in Eulerian coordinates.
We assume the fluids to be of Newtonian type, i.e., the stress tensors of the fluids are of
the form $T (v,p)= 2\mu^\pm Dv -pI$ in $\Omega^\pm(t)$ with constant viscosities $\mu^\pm>0$ and
$2Dv= \nabla v +\nabla v^T$. Moreover, we consider the case with surface
tension at the interface. In this model  the densities of the fluids are assumed to be the same and for simplicity set to one.
For the evolution of the phases we take diffusional
effects into account and consider a contribution to the flux that is
proportional to the negative gradient of the chemical potential $\mu$.
Precise assumptions are made below. This is motivated e.g. from studies of spinodal decomposition in certain polymer mixtures, cf. \cite{SpinodalDecompFluids}.

To formulate our model we introduce some notation first. Denote by $\nu_{\Gamma(t)}$ the unit
normal of $\Gamma(t)$ that points outside $\Omega^+(t)$ and by $V$ and
$H$ the normal velocity and scalar mean curvature of $\Gamma(t)$
with respect to $\nu_{\Gamma(t)}$. By $\Jump{\cdot}$ we denote
the jump of a quantity across the interface in direction of $\nu_{\Gamma(t)}$, i.e.,
\begin{equation*}
  [\![f]\!](x)= \lim_{h\to 0}(f(x+h\nu_{\Gamma(t)})-f(x-h\nu_{\Gamma(t)}))\quad \text{for}\ x\in \Gamma(t).
\end{equation*}
Then our model is described by the following system
\begin{alignat}{2}\label{eq:1}
  \partial_t v + v\cdot \nabla v - \Div T(v,p) &= 0 &\quad & \text{in}\
  \Omega^\pm(t)\ \text{for}\ t>0, \\\label{eq:2}
\Div v &= 0 &\ & \text{in}\ \Omega^\pm(t)\ \text{for}\ t>0, \\\label{eq:2'}
m\Delta \mu &= 0 &\ & \text{in}\ \Omega^\pm(t)\ \text{for}\  t>0, \\\label{eq:3}
 -\nu_{\Gamma(t)}\cdot [\![T(v,p)]\!] &= \STC H \nu_{\Gamma(t)}  &\ & \text{on}\ \Gamma(t)\ \text{for}\  t>0,
  \\\label{eq:4}
V-\nu_{\Gamma(t)}\cdot v|_{\Gamma(t)} &=  - m[\![\nu_{\Gamma(t)}\cdot \nabla\mu ]\!] && \text{on}\ \Gamma(t)\ \text{for}\ t>0,\\\label{eq:4'}
\mu|_{\Gamma(t)} & = \STC H && \text{on}\ \Gamma(t)\ \text{for}\ t>0,
\end{alignat}
together with the initial and boundary conditions
\begin{alignat}{2}\label{eq:5}
v|_{\partial\Omega} &= 0 &\ & \text{on}\ \partial\Omega\ \text{for}\ t>0, \\\label{eq:5'}
 \nu_\Omega\cdot m\nabla  \mu|_{\partial\Omega}  &= 0 &\ & \text{on}\ \partial\Omega\ \text{for}\ t>0,\\
\Omega^+(0)&= \Omega_0^+,&& \label{eq:6'}\\\label{eq:6}
v|_{t=0} &= v_0 &\ & \text{in}\ \Omega,
\end{alignat}
where $v_0, \Omega_0^+$ are given initial data satisfying
$\partial\Omega_0^+\cap \partial\Omega=\emptyset$ and where $\STC,m>0$ are
a surface tension and a mobility constant, respectively. Here and in the following it is
assumed that $v$ and $\mu$ do not jump across $\Gamma(t)$,
i.e.,
$$
[\![v]\!]=[\![\mu]\!]=0\qquad \text{on}\ \Gamma(t)\ \text{for}\  t>0.
$$
Equations (\ref{eq:1})-(\ref{eq:2}) describe the conservation of linear
momentum and mass in both fluids and (\ref{eq:3}) is the balance of forces
at the boundary. 
The equations for $v$ are
complemented by the non-slip condition \eqref{eq:5} at the boundary of
$\Omega$.
The conditions \eqref{eq:2'}, \eqref{eq:5'} describe together with
\eqref{eq:4} a continuity equation for the masses of the phases, and
\eqref{eq:4'} relates the chemical potential $\mu$ to the $L_2$-gradient
of the surface area, which is given by the mean curvature of the
interface.

For $m=0$ the velocity field $v$ is independent of $\mu$. In this case,
(\ref{eq:4}) describes the usual kinematic condition that the
interface is transported by the flow of the surrounding fluids and
(\ref{eq:1})-(\ref{eq:6}) reduces to the classical model of a two-phase
Navier--Stokes flow as for example studied by  Denisova and
Solonnikov~\cite{DenisovaTwoPhase} and K\"ohne et al.~\cite{KPW11},
where short time existence of strong
solutions is shown. 
On the other hand, if $m>0$, the equations (\ref{eq:2'}), (\ref{eq:4'}), (\ref{eq:5'})
with $v=0$ define the Mullins--Sekerka flow of a family of interfaces.
This evolution describes the gradient flow for the surface area functional
with respect to the $H^{-1}(\Omega)$ inner product.
Therefore we will also call (\ref{eq:1})-(\ref{eq:6})
the Navier-Stokes/Mullins-Sekerka system.

The motivation to consider (\ref{eq:1})-(\ref{eq:6}) with $m>0$ is twofold:
First of all, the modified system gives a regularization of the
classical model $m=0$ since the transport equation for the evolution of the interface is replaced by a third order parabolic evolution equation
(cf. also the effect of $m>0$ in \eqref{eq:NSCH3} below).
Secondly, (\ref{eq:1})-(\ref{eq:6}) appears as sharp
interface limit of the following diffuse interface model, introduced by
Hohenberg and Halperin~\cite{HohenbergHalperin} and rigorously derived by
Gurtin et al.~\cite{GurtinTwoPhase}:
\begin{alignat}{2}\label{eq:NSCH1}
  \partial_t v + v\cdot \nabla v - \dive (2\mu (c)Dv) + \nabla p &= -\eps\dive
  (\nabla c \otimes \nabla c)
  &\qquad&\text{in}\ \Omega\times (0,\infty), \\\label{eq:NSCH2}
  \dive v &=0 &\qquad&\text{in}\ \Omega\times (0,\infty), \\\label{eq:NSCH3}
  \partial_t c + v\cdot\nabla c &= m\Delta \mu &\qquad&\text{in}\ \Omega\times (0,\infty), \\\label{eq:NSCH4}
\mu &= \eps^{-1}f'(c) - \eps\Delta c&\qquad& \text{in}\ \Omega\times (0,\infty),\\\label{eq:NSCH5}
  v|_{\partial\Omega} &=0 &\qquad& \text{on}\ \partial\Omega\times (0,\infty),\\\label{eq:NSCH6}
  \partial_n c|_{\partial\Omega} = \partial_n \mu|_{\partial\Omega} &= 0
  &\qquad& \text{on}\ \partial\Omega\times (0,\infty),\\\label{eq:NSCH7}
  (v,c)|_{t=0} &= (v_0,c_0) &\qquad& \text{in}\ \Omega.
\end{alignat}
Here $c$ is the concentration of one of the fluids, where we note that a
partial mixing of both fluids is assumed in the model, and $f$ is a suitable
``double-well potential'' e.g. $f(c)=c^2(1-c)^2$. Moreover,
$\eps>0$ is a small parameter related to the interface thickness, $\mu$
is the so-called chemical potential and $m>0$ is the mobility. We refer to
\cite{ModelH,BoyerModelH} for some analytic results for this model and to \cite{NonNewtonianModelH,GrasselliPrazak} for results for a non-Newtonian variant of this model.
For some results on the sharp interface limit  of \eqref{eq:NSCH1}-\eqref{eq:NSCH7} we refer to A. and R\"oger~\cite[Appendix]{NSMS} and A., Garcke, and Gr\"un~\cite{AbelsGarckeGruen2}.

The purpose of this paper is to prove existence of strong solutions of \eqref{eq:1}-\eqref{eq:6} locally in time. Moreover, we will prove stability of  spheres, which are equilibria for the systems. (More precisely, we show dynamic stability of the solutions $v\equiv 0$, $\mu,p \equiv \text{const.}$, and $\Omega^+(t)= B_R(x)\subset \Omega$ for all $t>0$.)
Existence of weak solutions for large times and general initial data was shown in \cite{NSMS}.

In the following we will assume that $\Omega\subset \Rn$, $n=2,3$, is a bounded domain with $C^4$-boundary and that $\mu^\pm, m,\sigma>0$ are constants. One essential feature of \eqref{eq:1}-\eqref{eq:6} is the coupling of lower order between the velocity field $v$ and the chemical potential $\mu$ in equation \eqref{eq:4}. Indeed, we will obtain functions in the regularity classes $\mu\in L_p(J;W_p^2(\Omega\backslash\Gamma(\cdot)))$ and
$$v\in H_2^1(J;L_2(\Omega)^n)\cap L_2(J;H_2^2(\Omega\backslash\Gamma(\cdot))^n).$$
Taking the trace to $\Gamma(t)$ yields $\nabla\mu|_{\Gamma}\in L_p(J;W_p^{1-1/p}(\Gamma(\cdot))^n)$ and by complex interpolation and Sobolev embeddings we obtain
$$v\in H_2^1(J;L_2(\Omega)^n)\cap L_2(J;H_2^2(\Omega\backslash\Gamma(\cdot))^n)\hookrightarrow L_q(J;W_p^{1}(\Omega\backslash\Gamma(\cdot))^n),$$
where $q>p$ and $p\le 2(n+2)/n$. This shows that the trace
$$v|_{\Gamma}\in L_q(J;W_p^{1-1/p}(\Gamma(\cdot))^n)$$
possesses more regularity with respect to time compared to $\nabla\mu|_{\Gamma}$. We make essential use of this fact by applying the following strategy for the proof of local-in-time well-posedness. After parameterizing the free interface $\Gamma(t)$ via the Hanzawa transform by a height function $h$, the basic idea is to reduce \eqref{eq:1}-\eqref{eq:6} to a single equation for $h$. To this end we first assume that the interface, hence $h$, is given. Then we solve the (transformed) two-phase Navier-Stokes equations to obtain a solution operator $v=S_{NS}(h)$. Doing the same for the (transformed) two-phase Mullins-Sekerka equations, this yields a solution operator $\mu=S_{MS}(h)$. Finally, we consider the transformed evolution equation \eqref{eq:4} for the height function $h$ and replace $v$ and $\mu$ by $S_{NS}(h)$ and $S_{MS}(h)$, respectively, to obtain a single equation for $h$. This quasilinear parabolic equation in turn can be solved by parabolic theory. The only point one has to take care of is that the solution operator $S_{NS}$ in nonlocal in time and space. Therefore one has to deal with a parabolic equation with local leading part and lower order perturbations which are nonlocal (in time and space). Having solved the single equation for $h$ readily computes the velocity, the pressure and the chemical potential by the solution operators obtained before.

Let us comment on the choice of an $L^2$-setting for the Navier-Stokes part, while the equations for the height function $h$ and the chemical potential $\mu$ are treated by an $L^p$-theory, $p>2$. One advantage is that the optimal regularity result for the two-phase Navier-Stokes equations with a given interface (see Theorem \ref{thm:MaxReg}) is more or less easy to prove since it relies solely on resolvent estimates in $L^2$. Another benefit is the reduction of the regularity of the initial velocity and the compatibility conditions at $t=0$. For instance, if $p=2$, then there is no compatibility condition for the initial value $v_0$ coming from the jump of the stress tensor, that is equation \eqref{eq:3}.

The structure of the paper is as follows: First we introduce some basic notation and auxiliary results in Section~\ref{sec:Prelim}. Then we will prove that for a given sufficiently smooth interface $\Gamma(t)$ the Navier-Stokes part of the system, i.e., \eqref{eq:1}-\eqref{eq:2}, \eqref{eq:3}, \eqref{eq:5}, \eqref{eq:6} possesses for sufficiently small times a unique strong solution $v$ in $L^2$-Sobolev spaces, which are second order in space and first order in time. This result is proved using a coordinate transformation to the initial domains $\Omega_0^\pm$ which goes back to Hanzawa and applying the contraction mapping principle. A key tool in our analysis will be a maximal $L^2$-regularity result for the linearized Stokes system, which is proved in the appendix. Afterwards in Section~\ref{sec:local} we prove that the full system possesses a strong solution locally in time for sufficiently smooth initial data by reducing the whole system to a single equation for the height function $h$ (see above). Then in Section~\ref{sec:Global} we prove stability of the stationary solutions that are given by $v\equiv 0$, $\mu,p\equiv const.$ and $\Gamma(t)\equiv \partial B_r(x_0)\subset \Omega$ and we show that $(v(t),\Gamma(t))$ converges to an equilibrium as $t\to\infty$ at an exponential rate.


\section{Preliminaries}\label{sec:Prelim}
\subsection{Notation and Function Spaces}

If $X$ is a Banach space, $r>0$, $x\in X$, then $B_X(x,r)$ denotes the (open) ball in $X$ around $x$ with radius $r$. We will often write simply $B(x,r)$ instead of $B_X(x,r)$ if $X$ is well known from the context.

The usual $L^p$-Sobolev spaces are denoted by $W^k_p(\Omega)$ for $k\in\N_0,1\leq p\leq \infty$, and $H^k(\Omega)= W^k_2(\Omega)$. Moreover $W^k_{p,0}(\Omega)$ and $H^k_0(\Omega)$ denote the closure of $C_0^\infty(\Omega)$ in $W^k_p(\Omega)$, $H^k(\Omega)$, respectively. The vector-valued variants are denoted by $W^k_p(\Omega;X)$ and $H^k(\Omega;X)$, where $X$ is a Banach space. 
The usual Besov spaces are denoted by $B^s_{p,q}(\Rn)$, $s\in \R$, $1\leq p,q\leq \infty$, cf. e.g. \cite{Interpolation,Triebel1}. If $\Omega\subseteq\Rn$ is a domain, $B^s_{p,q}(\Omega)$ is defined by restriction of the elements of $B^s_{p,q}(\Rn)$ to $\Omega$, equipped with the quotient norm. We refer to \cite{Interpolation,Triebel1} for the standard results on interpolation of Besov spaces and Sobolev embeddings. We only note that $B^s_{p,q}(\Omega)$ and $W^k_p(\Omega)$ are retracts of $B^s_{p,q}(\Rn)$ and $W^k_p(\Rn)$, respectively, because of the extension operator constructed in Stein~\cite[Chapter VI, Section 3.2]{Stein:SingInt} for bounded Lipschitz domains. In particular, we have
\begin{equation}\label{eq:InterpolationSobolevBesov}
(W^k_{p_0}(\Omega), W^{k+1}_{p_1}(\Omega))_{\theta,p}= B^{k+\theta}_{p,p}(\Omega)\qquad \text{if}\ \frac1p=\frac{1-\theta}{p_0}+\frac{\theta}{p_1},k\in \N_0,
\end{equation}
for all $\theta\in (0,1)$, cf. \cite[Section~2.4.2 Theorem~1]{Triebel1}. We also denote $W^{k+\theta}_p(\Omega)=B^{k+\theta}_{p,p}(\Omega)$ for $k\in\N_0$, $\theta\in (0,1)$, $1\leq p\leq \infty$.
Furthermore, we define
\begin{eqnarray*}
  L^2_{(0)}(\Omega) &=& \left\{f\in L^2(\Omega): \int_{\Omega} f(x) \sd x=0\right\},\\
  L^2_\sigma(\Omega)&=&\overline{\left\{f\in C^\infty_0(\Omega)^n:\Div f =0\right\}}^{L^2(\Omega)^n}.
\end{eqnarray*}

In order to derive some suitable estimates  we will use vector-valued Besov spaces $B^s_{q,\infty}(I;X)$,
where $s\in (0,1)$, $1\leq q\leq\infty$, $I$ is an interval, and $X$ is a Banach space. They are defined as
\begin{eqnarray*}
  B^s_{q,\infty}(I;X)&=&\left\{f\in L^q(I;X): \|f\|_{B^s_{q,\infty}(I;X)}<\infty\right\},\\
    \|f\|_{B^s_{q,\infty}(I;X)} &=&\|f\|_{L^q(I;X)}+
    \sup_{0<h\leq 1}\|\Delta_h f(t)\|_{L^q(I_h;X)},
\end{eqnarray*}
where $\Delta_h f(t)= f(t+h)-f(t)$ and $I_h=\{t\in I: t+h\in I\}$.
Moreover, we set $C^s(I;X)= B^s_{\infty,\infty}(I;X)$, $s\in (0,1)$.
Now let $X_0,X_1$ be two Banach spaces. Using
$f(t) - f(s) = \int_s^t \frac{d}{dt} f(\tau)\sd \tau$
it is easy to show that for $1\leq q_0 <q_1\leq \infty$
\begin{equation}\label{eq:BesovEmbedding}
  W^1_{q_1}(I;X_1)\cap L^{q_0}(I;X_0) \hookrightarrow B^\theta_{q,\infty}(I;X_\theta),\qquad \frac1q = \frac{1-\theta}{q_0}+
\frac{\theta}{q_1},
\end{equation}
where $\theta \in (0,1)$ and $X_\theta = (X_0,X_1)_{[\theta]}$ or $X_\theta =(X_0,X_1)_{\theta,r}$, $1\leq r\leq \infty$. Furthermore,
\begin{equation}\label{eq:BesovSobolev}
  B^\theta_{q,\infty}(I;X)\hookrightarrow C^{\theta-\frac1q}(I;X)
\quad \text{for all}\ 0<\theta<1,1\leq q\leq \infty\ \text{with}\ \theta-\frac1q>0,
\end{equation}
cf. e.g. \cite{SimonBesovEmbeddings}.
 Furthermore, for $s\in (0,1)$ we define $ H^s(0,T;X)= B^s_{2,2}(0,T;X)$, where $f\in B^s_{2,2}(0,T;X)$ if and only if $f\in L^2(0,T;X)$ and
 \begin{equation*}
   \|f\|_{B^s_{2,2}(0,T;X)}^2= \|f\|_{L^2(0,T;X)}^2 + \int_0^T\int_0^T\frac{\|f(t)-f(\tau)\|_{X}^2}{|t-\tau|^{2s+1}}\sd t\sd \tau <\infty.
 \end{equation*}
In the following we will use that
\begin{eqnarray*}
  \lefteqn{\int_0^T\int_0^T\frac{\|f(t)-f(\tau)\|_{X}^2}{|t-\tau|^{2s+1}}\sd t\sd \tau}\\
&\leq& \int_0^T\int_0^T|t-\tau|^{2(s'-s)-1}\sd t\sd \tau \|f\|_{C^{s'}([0,T];X)}^2
\leq C_{s',s}T^{2(s'-s)+1} \|f\|_{C^{s'}([0,T];X)}^2
\end{eqnarray*}
for all $0<s<s'\leq 1$, which implies
\begin{equation}\label{eq:VecHEstim}
  \|f\|_{H^{s}(0,T;X)}\leq C_{s,s'}T^{\frac12}\|f\|_{C^{s'}([0,T];X)}\quad \text{for all}\ f\in C^{s'}([0,T];X)
\end{equation}
provided that $0<s<s'\leq 1$, $0<T\leq 1$.

Furthermore, we note that the space of bounded $k$-times continuously differentiable functions $f\colon U\subset X\to Y$ with bounded derivatives are denoted by $BC^k(U;Y)$, where $X,Y$ are Banach spaces and $U$ is an open set. Moreover, $f\in C^k(U;Y)$ if for every $x\in U$ there is some neighborhood $V$ of $x$ such that $f|_{V}\in BC^k(V;Y)$.

We will frequently use the following multiplication result for Besov spaces:
\begin{equation}\label{eq:BesovProd}
  \|fg\|_{B^s_{p, \max(q_1,q_2)}}\leq C_{r,s,p,q}\|f\|_{B^r_{p_1,q_1}}\|g\|_{B^s_{p,q_2}}
\end{equation} 
for all $ f\in B^r_{p_1,q_1}(\Rn),g\in B^s_{p,q_2}(\Rn)$ provided that  $1\leq p\leq p_1\leq \infty$, $1\leq q_1,q_2\leq \infty$, $r>\frac{n}{p_1}$, and
$$-r+n\left(\tfrac{1}{p_1}+\tfrac{1}{p}-1\right)_+ < s\leq r,$$
cf. \cite[Theorem~6.6]{JohnsenPointwiseMultipliers}. Since $W^s_p(\R^{n})= B^s_{p,p}(\R^{n})$ for every $s\in (0,\infty)\setminus \N$,
 this implies that
\begin{equation}\label{eq:AlgebraEstim}
  \|fg\|_{W^s_{p}(\R^n)}\leq C_{s,p} \|f\|_{W^s_{p}(\R^n)} \|g\|_{W^s_{p}(\R^n)}\qquad \text{for all}\ f,g\in W^s_p(\Rn)
\end{equation}
provided that $s-\frac{n}p>0$, $1\leq p\leq \infty$. Concerning composition operators, we note that
\begin{equation}\label{eq:CompOp}
G(f)\in B^s_{p,q}(\Rn)\qquad \text{for all}\ G\in C^\infty(\R)\ \text{with}\ G(0)=0,f\in B^s_{p,q}(\Rn)
\end{equation}
 provided that again $s-\frac{n}p>0$, $1\leq p,q\leq \infty$.
This implies that $f^{-1}\in B^s_{p,q}(\Omega)$ for all $f\in  B^s_{p,q}(\Omega)$ such that $|f|\geq c_0>0$ if $\Omega$ is a bounded Lipschitz domain. Moreover, the mappings $f\mapsto G(f)$ is bounded on $B^s_{p,q}(\R^n)$ under the previous conditions. We refer to Runst \cite{RunstCompOp} for an overview, further results, and references. Furthermore, using the boundedness of $f\mapsto G(f)$ one can easily derive that
\begin{equation*}
  G(\cdot) \in C^1(B^s_{p,q}(\Rn))
\end{equation*}
for any $G\in C^\infty(\R)$ with $G(0)=0$.
To this end one uses
\begin{equation*}
  G(f(x)+h(x))= G(f(x))+ G'(f(x))+\int_0^1 G''(f(x)+th(x))\sd t\, h(x)^2
\end{equation*}
together with \eqref{eq:AlgebraEstim} and the fact that $(G''(f+th))_{t\in [0,1]}$ is bounded in $B^s_{p,q}(\Rn)$.

Finally, by standard methods these results directly carry over to $W^s_p(\Sigma), B^s_{p,q}(\Sigma)$ if $\Sigma$ is an $n$-dimensional smooth compact manifold. Then $G(0)=0$ is no longer required since constant functions are in $B^s_{p,q}(\Sigma)$.

\medskip

\subsection{Coordinate Transformation and Linearized Curvature Operator}
~\\
In the following let $\Sigma\subset \Omega$ be a smooth, oriented, compact and $(n-1)$-dimensional (reference) manifold with normal vector field $\nu_\Sigma$. Moreover, for a given measurable ``height function'' $h\colon \Sigma\to \R$ let
\begin{equation*}
  \theta_h\colon \Sigma \to \Rn\colon x\mapsto x+h(x)\nu_\Sigma(x).
\end{equation*}
Then $\theta_h$ is injective provided that $\|h\|_{L^\infty}\leq a$ for some sufficiently small $a>0$, where $a$ depends on the maximal curvature of $\Sigma$. Moreover, we choose $a$ so small that $3a <\dist(\Sigma,\partial\Omega)$. Then the so-called \emph{Hanzawa transformation} is defined as
\begin{equation}\label{eq:Hansawa}
  \Theta_h(x,t)= x+\chi(d_{\Sigma}(x)/4a)h(t,\Pi(x))\nu_\Sigma(\Pi(x)),
\end{equation}
where $d_\Sigma$ is the signed distance function with respect to $\Sigma$, $\Pi(x)$ is the orthogonal projection onto $\Sigma$, $\chi\in C^\infty(\R)$ such that $\chi(s)=1$ for $|s|<\frac13$ and $\chi(s)=0$ for $|s|>\frac23$ as well as $|\chi'(s)|\leq 4$ for all $s\in\R$, and $\|h\|_{L^\infty}<a$.
It is well-known that $\Theta_h(.,t)\colon\Omega\to\Omega$ is a $C^1$-diffeomorphism. Hence $\Gamma_h:=\Theta_h(\Sigma)=\theta_h(\Sigma)$ is an oriented, compact $C^k$-manifold if $h\in C^k(\Sigma)$ with $\|h\|_{L^\infty(\Sigma)}<a$.

For the following let
\begin{eqnarray}\label{eq:U}
  \U&=& \left\{h\in W^{4-\frac4p}_p(\Sigma): \|h\|_{L^\infty} <a\right\},\\\nonumber
  \mathbb{E}_{1,T}&=& L^p(0,T;W^{4-\frac1p}_p(\Sigma))\cap W^1_p(0,T;W^{1-\frac1p}_p(\Sigma))
\end{eqnarray}
where $3< p \leq \frac{2(n+2)}n$, $0<T<\infty$.
Furthermore, let
\begin{equation}\label{eq:Curvature}
  K(h):= H_{h}\circ \theta_h,
\end{equation}
where $H_h\colon \Gamma_h\to \R$ denotes the mean curvature of $\Gamma_h= \theta_h(\Sigma)$, i.e., it is the sum of all principal curvatures.
\begin{lemma}\label{lem:LCO1}
  Let $3< p\leq \frac{2(n+2)}n$ and $\U\subset W^{4-\frac4p}_p(\Sigma)$ be as above. Then there are functions
  \begin{alignat*}{1}
    P&\in C^1(\U,\mathcal{L}(W^{4-\frac1p}_p(\Sigma),W^{2-\frac1p}_p(\Sigma))),
  \quad  Q\in C^1(\U,W^{2-\frac1p}_p(\Sigma))
  \end{alignat*}
  such that
  \begin{equation*}
    K(\rho)= P(\rho)\rho +Q(\rho)\qquad \text{for all}\ \rho \in \U\cap W^{4-\frac1p}_p(\Sigma).
  \end{equation*}
  Moreover, if $\Sigma=S_R:=\partial B_R(0)$, then
  \begin{equation}\label{eq:DK0}
    DK(0)= D:=D_{S_R}:=-\frac1{n-1} \left(\frac{n-1}{R^2} +\Delta_{S_R}\right).
  \end{equation}
\end{lemma}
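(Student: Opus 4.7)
My plan is to compute $K(\rho)$ in local coordinates, extract its quasilinear structure, and then verify the required smoothness using the composition and multiplier estimates \eqref{eq:AlgebraEstim}--\eqref{eq:CompOp} from the preliminary section. First, I would fix a finite atlas $\{\varphi_j\colon U_j \subset \R^{n-1} \to \Sigma\}$ with subordinate partition of unity. In such a chart, $\theta_\rho$ is the explicit map $y \mapsto \varphi_j(y) + \rho(\varphi_j(y))\,\nu_\Sigma(\varphi_j(y))$, and the tangent vectors, the induced metric $g_{ij}(\rho)$, its inverse $g^{ij}(\rho)$ (via Cramer's rule), the outward unit normal to $\Gamma_\rho$, and the second fundamental form $b_{ij}(\rho)$ are all explicit rational expressions in $(\rho, \nabla_\Sigma \rho, \nabla_\Sigma^2 \rho)$, with denominators bounded away from zero as long as $\|\rho\|_{L^\infty} \le a$. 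Crucially, $b_{ij}(\rho)$ is affine in $\nabla_\Sigma^2 \rho$ with coefficients depending smoothly on $(\rho,\nabla_\Sigma\rho)$, while $g^{ij}(\rho)$ depends smoothly on $(\rho, \nabla_\Sigma \rho)$ alone. Contracting yields a local representation
$$
K(\rho) = a^{ij}(\rho, \nabla_\Sigma \rho)\, \partial_i \partial_j \rho + b(\rho, \nabla_\Sigma \rho),
$$
and gluing via the partition of unity gives the global decomposition $K(\rho) = P(\rho)\rho + Q(\rho)$, where $P(\rho)$ is the resulting second order linear operator and $Q(\rho) = b(\rho,\nabla_\Sigma\rho)$ is the lower order remainder.

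For the claimed mapping properties and $C^1$-regularity, I would use that for $\rho \in \U$ we have $\nabla_\Sigma \rho \in W^{3-4/p}_p(\Sigma)$, and since $3 - 4/p > (n-1)/p$ for $p > 3$ and $n \le 3$, this space is a Banach algebra embedding continuously into $C(\Sigma)$. Then \eqref{eq:CompOp}, together with its $C^1$-version stated right after it, ensures that $a^{ij}(\rho, \nabla_\Sigma \rho)$ and $b(\rho, \nabla_\Sigma \rho)$ lie in $W^{3-4/p}_p(\Sigma)$ and depend $C^1$ on $\rho \in \U$. The multiplier estimate \eqref{eq:BesovProd} further yields $W^{3-4/p}_p(\Sigma) \cdot W^{2-1/p}_p(\Sigma) \hookrightarrow W^{2-1/p}_p(\Sigma)$ for $p \ge 3$, so $P(\rho)\xi = a^{ij}(\rho, \nabla_\Sigma \rho)\, \partial_i\partial_j \xi$ maps $W^{4-1/p}_p(\Sigma)$ continuously into $W^{2-1/p}_p(\Sigma)$ with $C^1$-dependence on $\rho$; the analogous argument works for $Q(\rho)$.

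Finally, for $\Sigma = S_R$ the linearization $DK(0)$ is computed by differentiating the local formula above at $\rho = 0$: the only surviving contributions come from the second order term evaluated at the round metric on $S_R$ and from a zeroth order term produced by differentiating the normal and the shape operator. Equivalently, this is the classical first variation of the mean curvature along the normal deformation $\theta_{t\phi}$; in the scalar mean curvature convention used here one has $DH_\Sigma(0)\phi = -\tfrac{1}{n-1}(\Delta_\Sigma \phi + |A_\Sigma|^2 \phi)$, and together with $|A_{S_R}|^2 = (n-1)/R^2$ this delivers \eqref{eq:DK0}. The main obstacle is the regularity bookkeeping in the second step, and it is precisely this that forces the restriction $p > 3$; once the quasilinear decomposition is secured, the identification of $DK(0)$ is a standard geometric computation.
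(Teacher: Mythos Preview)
Your proposal is correct and follows essentially the same route as the paper: compute $K(\rho)$ in local charts to expose its quasilinear structure, then push the coefficients through the composition result \eqref{eq:CompOp} and the multiplier estimate \eqref{eq:BesovProd} to land in $W^{3-4/p}_p(\Sigma)$, and finally invoke the classical first variation of mean curvature for \eqref{eq:DK0}. The only cosmetic difference is that the paper (following \cite{ES98}) keeps the first-order part $p_i(\rho)\partial_i$ inside $P(\rho)$, whereas you absorb it into $Q(\rho)=b(\rho,\nabla_\Sigma\rho)$; both decompositions satisfy the lemma, and the regularity bookkeeping is identical since in either case the coefficients sit in $W^{3-4/p}_p(\Sigma)\hookrightarrow W^{2-1/p}_p(\Sigma)$ for $p>3$.
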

\begin{proof}
  The proof follows essentially from the proof of \cite[Lemma~3.1]{ES98} and \cite[Remark~3.2 a.]{ES98}. To this end
  let $\{(U_l,\varphi_l): 1\leq l\leq L\}$ be a localization system for $\Sigma$, i.e., $\Sigma=\bigcup_{l=1}^L U_l$ and $\varphi_l\colon (-a,a)^{n-1}\to U_l$ is a smooth local parametrization of $U_l$ for all $l=1,\ldots,L$. Moreover, let $s=(s_1,\ldots, s_{n-1})$ be the local coordinates of $U_l$ with respect to this parametrization and
  \begin{equation*}
    \rho_l(s):= \rho(\varphi_l(s)),\qquad X_l(s,r):= X(\varphi_l(s),r),\qquad (s,r)\in (-a,a)^n
  \end{equation*}
be the local representations of $\rho,X$, where $X\colon \Sigma\times (-a,a)\to\Rn$ with $X(s,r)=s+r\nu_{\Sigma}(s)$ and $\rho\in  U\subset W^{4-\frac4p}_p(\Sigma)$. Then it follows from \cite[Equations (3.4), (3.5), Remark~3.2 a.]{ES98} that $K(\rho)=P(\rho)\rho +Q(\rho)$, where $P(\rho),Q(\rho)$ have the local representations
\begin{equation*}
  P_l(\rho)= \frac{1}{n-1}\left(\sum_{j,k=1}^{n-1}p_{jk}(\rho)\partial_{s_j}\partial_{s_k} + \sum_{i=1}^{n-1} p_i(\rho)\partial_{s_i}\right),\quad
Q_l(\rho)= \frac{1}{n-1} q(\rho),
\end{equation*}
where
\begin{eqnarray*}
p_{jk}(\rho) &=& \frac1{l^3_\rho}\left(-l^2_\rho w^{jk}(\rho)+ \sum_{l,m=1}^{n-1}w^{jl}(\rho)w^{km}(\rho)\partial_{s_l}\rho \partial_{s_m}\rho \right)\\
  p_{i}(\rho) &=& \frac1{l^3_\rho}\left(l^2_\rho \sum_{j,k=1}^{n-1}w^{jk}\Gamma_{jk}^i+ \sum_{j,l=1}^{n-1} w^{jl}w^{ki}\Gamma_{jk}^n\partial_{s_l}\rho+ \sum_{k,m=1}^{n-1}2 w^{km}\Gamma_{nk}^i\partial_{s_m}\rho\right.\\
&&\left. -\sum_{j,k,l,m=1}^{n-1}w^{jl}w^{km}\Gamma_{jk}^i\partial_{s_l}\rho\partial_{s_m}\rho\right),\\
q(\rho)&=& -\frac1{l_\rho} \sum_{j,k=1}^{n-1} w^{jk}(\rho)\Gamma_{jk}^n(\rho),\quad
l_\rho = \sqrt{1+ \sum_{j,k=1}^{n-1}w^{jk}(\rho)\partial_{s_j}\rho\partial_{s_k}\rho},\\
\Gamma^i_{jk}(\rho)&=& \sum_{m=1}^{n-1}w^{im}(\rho)\partial_{s_j}\partial_{s_k} X\cdot \partial_{s_m} X|_{(s,\rho(s))},\quad i\neq n,\\
\Gamma^n_{jk}(\rho)&=& \partial_{s_j}\partial_{s_k} X\cdot \partial_{s_n} X|_{(s,\rho(s))},\quad
w_{jk}(\rho)(s)= \partial_{s_j} X\cdot \partial_{s_k} X|_{(s,\rho(s))}
\end{eqnarray*}
and $(w^{jk}(\rho)(s))_{j,k=1}^{n-1}$ is the inverse of $(w_{jk}(\rho)(s))_{j,k=1}^{n-1}$.

Since $\Sigma$ is smooth, $X$ and $\partial_{s_j} X\cdot \partial_{s_j}X$ are smooth. Therefore
$w_{jk}(\rho)\in W^{4-\frac4p}_p(\Sigma)$ because of \eqref{eq:CompOp}. Since $\det ((w_{jk})_{j,k=1}^{n-1})\geq c_0 >0$ by construction, we obtain $w^{jk}(\rho)\in W^{4-\frac4p}_p(\Sigma)$ for all $j,k=1,\ldots,n-1$ because of \eqref{eq:CompOp}.

 Moreover, $\partial_{s_j}\rho \in W^{3-\frac4p}_p(\Sigma)$ and therefore
\begin{equation*}
   \sum_{j,k=1}^{n-1}w^{jk}(\rho)\partial_{s_j}\rho\partial_{s_k}\rho\in W^{3-\frac4p}_p(\Sigma)
\end{equation*}
due to \eqref{eq:AlgebraEstim}. Using \eqref{eq:CompOp} again, we obtain  $l_\rho\in  W^{3-\frac4p}_p(\Sigma)$. Proceeding this way, we finally obtain that $p_{jk}(\rho),p_i(\rho), q(\rho)\in W^{3-\frac4p}_p(\Sigma)$ for all $\rho\in \U$. Now \eqref{eq:BesovProd} implies that
\begin{equation*}
  \|au\|_{W^{2-\frac1p}_p(\Sigma)}\leq C_p \|a\|_{W^{3-\frac4p}_p(\Sigma)} \|u\|_{W^{2-\frac1p}_p(\Sigma)}
\end{equation*}
for all $a\in W^{3-\frac4p}_p(\Sigma), u\in W^{2-\frac1p}_p(\Sigma)$. Hence
\begin{alignat*}{1}
  P&\in C^1(\U,\mathcal{L}(W^{4-\frac1p}_p(\Sigma), W^{2-\frac1p}_p(\Sigma)),\\
Q&\in C^1(\U,\mathcal{L}(W^{2-\frac1p}_p(\Sigma))
\end{alignat*}
since the operators are compositions of $C^1$-mappings. Moreover, \eqref{eq:DK0} follows directly from the observations in the proof of \cite[Lemma~3.1]{ES98}.
\end{proof}
\begin{corollary}
  \label{cor:Curvature}
  Let $K$ be as in \eqref{eq:Curvature}. Then
  \begin{equation*}
    K\in C^1(\mathbb{E}_{1,T}\cap \U; H^{\frac14}(0,T;L_2(\Sigma))\cap L_2(0,T;H^{\frac12}(\Sigma))).
  \end{equation*}
  Moreover, for every $\eps>0, 0<T_0<\infty$ there is some $C>0$ such that
  \begin{equation*}
    \|K\|_{BC^1(\mathbb{E}_{1,T}\cap \U_\eps;H^{\frac14}(0,T;L_2(\Sigma))\cap L_2(0,T;H^{\frac12}(\Sigma))) }\leq C
  \end{equation*}
  for all $0<T\leq T_0$, where $\U_\eps =\{a\in \U:\|a\|_{L^\infty(\Sigma)}\leq a-\eps\}$. 
\end{corollary}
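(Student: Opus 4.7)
The plan is to decouple the claim into three pieces: show $K(\rho)\in L_2(0,T;H^{1/2}(\Sigma))$, show $K(\rho)\in H^{1/4}(0,T;L_2(\Sigma))$, and then upgrade both to $C^1$-regularity with the stated uniform $BC^1$-bound. The main ingredients are Lemma~\ref{lem:LCO1} together with the embedding results \eqref{eq:BesovEmbedding}--\eqref{eq:CompOp} and \eqref{eq:VecHEstim}.

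For the spatial part I would evaluate the decomposition $K(\rho(t))=P(\rho(t))\rho(t)+Q(\rho(t))$ of Lemma~\ref{lem:LCO1} pointwise in time. Since $P$ and $Q$ are $C^1$ on $\U$, their norms are uniformly bounded over $\U_\eps$; applied to $\rho\in L^p(0,T;W^{4-1/p}_p(\Sigma))$ this yields $K(\rho)\in L^p(0,T;W^{2-1/p}_p(\Sigma))$. The standard Sobolev embedding on the compact manifold $\Sigma$ of dimension $n-1\le 2$ gives $W^{2-1/p}_p(\Sigma)\hookrightarrow H^{1/2}(\Sigma)$ because $p>3$, and $L^p(0,T)\hookrightarrow L_2(0,T)$ on the bounded interval $(0,T)$ closes this step.

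For the temporal part the idea is to trade a bit of spatial regularity for Hölder regularity in time. Combining \eqref{eq:BesovEmbedding} with \eqref{eq:BesovSobolev}, for every $\theta\in(1/p,1)$ one has
$$\mathbb{E}_{1,T}\hookrightarrow C^{\theta-1/p}([0,T];W^{4-1/p-3\theta}_p(\Sigma)).$$
Writing $\theta=1/p+\delta$ and $s=4-4/p-3\delta$, I would pick $\delta>1/4$ such that both the algebra condition $s-1>(n-1)/p$ and the Sobolev embedding $W^{s-2}_p(\Sigma)\hookrightarrow L_2(\Sigma)$ hold; a direct calculation shows this is possible whenever $3<p\le 2(n+2)/n$, $n\in\{2,3\}$. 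Rereading the construction of $p_{jk}(\rho),p_i(\rho),q(\rho)$ in the proof of Lemma~\ref{lem:LCO1} with the weaker ambient space $W^s_p(\Sigma)$, the algebra/composition/product estimates \eqref{eq:CompOp}, \eqref{eq:AlgebraEstim}, \eqref{eq:BesovProd} show that these coefficients still lie in $W^{s-1}_p(\Sigma)$ and depend in a $C^1$-fashion on $\rho$, so $K$ extends to a $C^1$-map $\U\cap W^s_p(\Sigma)\to L_2(\Sigma)$. Composition with the time-Hölder embedding gives $K(\rho)\in C^\delta([0,T];L_2(\Sigma))$, and since $\delta>1/4$, \eqref{eq:VecHEstim} converts this into the desired $H^{1/4}(0,T;L_2(\Sigma))$-bound, with a constant proportional to $T_0^{1/2}$.

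The $C^1$-statement and the uniform $BC^1$-bound then follow by differentiating pointwise,
$$DK(\rho)[\eta]=(DP(\rho)[\eta])\rho+P(\rho)\eta+DQ(\rho)[\eta],$$
and running the same embeddings and algebra estimates on each of the three terms, with constants uniform for $\rho\in\U_\eps$ and $T\le T_0$. The main obstacle I anticipate is the simultaneous index bookkeeping: one must choose $\delta>1/4$ (to beat the $1/2$-loss in \eqref{eq:VecHEstim}) while preserving both the algebra condition $s-1>(n-1)/p$ and the embedding $W^{s-2}_p(\Sigma)\hookrightarrow L_2(\Sigma)$, uniformly across the admissible range $3<p\le 2(n+2)/n$, $n\in\{2,3\}$.
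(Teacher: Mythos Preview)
Your argument is correct but follows a different path from the paper for the $H^{1/4}(0,T;L_2(\Sigma))$ part. The paper does not rerun Lemma~\ref{lem:LCO1} at a lower spatial scale; instead it exploits the semilinear structure $K(h)=\sum_{|\alpha|\le 2}a_\alpha(x,h,\nabla_s h)\,\partial_s^\alpha h$ and distributes the available regularity asymmetrically. Using $\mathbb{E}_{1,T}\hookrightarrow B^{2/3}_{p,\infty}(0,T;W^{2-1/p}_p(\Sigma))\cap B^{1/3}_{p,\infty}(0,T;W^{3-1/p}_p(\Sigma))$, the coefficients $a_\alpha$ (which depend only on $h,\nabla_s h$) land in $C^{1/3}([0,T];C^0(\Sigma))$, while the top-order factor $\partial_s^\alpha h$ lands in $B^{1/3}_{p,\infty}(0,T;L_p(\Sigma))$; a single product estimate then gives $K(h)\in B^{1/3}_{p,\infty}(0,T;L_p(\Sigma))\hookrightarrow H^{1/4}(0,T;L_2(\Sigma))$, and the only hypothesis used is $p>3$. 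Your route --- treating $K$ as a fully nonlinear map $W^s_p(\Sigma)\to L_2(\Sigma)$ and recovering time regularity by composition --- is more generic (it would apply to operators without this semilinear form), but at the price of the simultaneous index check you flagged, and it implicitly needs $DK$ to be locally Lipschitz on $W^s_p$ in order to push $C^1$-dependence through the H\"older composition into $C^\delta([0,T];L_2(\Sigma))$. That extra ingredient is available here since the coefficients $p_{jk},p_i,q$ are smooth in $\rho$, but it is worth making explicit.
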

\begin{proof}
  We use that
  \begin{equation*}
    K(h)= \sum_{|\alpha|\leq 2} a_\alpha(x,h,\nabla_s h)\partial_s^\alpha h
  \end{equation*}
  for all $h\in C^2(\Sigma)$, where $a_\alpha\colon \Sigma\times \R\times \R^{n-1}\to \R$ is smooth. Since
  \begin{equation*}
    \mathbb{E}_{1,T} \hookrightarrow B^{\frac23}_{p,\infty}(0,T;W^{2-\frac1p}_p(\Sigma))\cap B^{\frac13}_{p,\infty}(0,T;W^{3-\frac1p}_p(\Sigma))
  \end{equation*}
  due to \eqref{eq:BesovEmbedding}
  and
  \begin{equation*}
    B^{\frac23}_{p,\infty}(0,T;W^{2-\frac1p}_p(\Sigma))\hookrightarrow C^{\frac13}([0,T];C^0(\Sigma))
  \end{equation*}
due to \eqref{eq:BesovSobolev} and $p>3$,
  we conclude that
  \begin{equation*}
    a_\alpha(x,h,\nabla_s h)\in C^{\frac13}([0,T];C^0(\Sigma))\quad\text{for all}\ h\in  \mathbb{E}_{1,T}\cap \U
  \end{equation*}
  and for all $|\alpha|\leq 2$.
  Moreover, the mapping
  \begin{equation*}
   \U\cap \mathbb{E}_{1,T}\ni h\mapsto  a_\alpha(x,h,\nabla_s h)\in C^{\frac13}([0,T];C^0(\Sigma))
  \end{equation*}
  is $C^1$ since $a_\alpha$ are smooth. Furthermore, we conclude that
  \begin{eqnarray*}
   \lefteqn{\|a_\alpha(x,h,\nabla_s h)\partial_s^\alpha v\|_{H^{\frac14}(0,T;L_2(\Sigma))}}\\
&\leq & C_\eps\|a_\alpha(x,h,\nabla_s h)\partial_s^\alpha v\|_{B^{\frac13}_{p,\infty}(0,T;L_p(\Sigma))}\\
&\leq & C_\eps\|a_\alpha(x,h,\nabla_s h)\|_{C^{\frac13}([0,T];C^0(\Sigma))}\| v\|_{B^{\frac13}_{p,\infty}(0,T;W^{1-\frac1p}_p(\Sigma))}\\
&\leq & C_\eps\|a_\alpha(x,h,\nabla_s h)\|_{C^{\frac13}([0,T];C^0(\Sigma))}\| v\|_{\mathbb{E}_{1,T}}
  \end{eqnarray*}
for all $|\alpha|\leq 2$, $v\in \mathbb{E}_{1,T}$, $h\in \mathbb{E}_{1,T}\cap \U_\eps$, $\eps>0$.
Since multiplication is smooth (if bounded), it follows that
\begin{equation*}
     K\in BC^1(\mathbb{E}_{1,T}\cap \U_\eps; H^{\frac14}(0,T;L_2(\Sigma)))
\end{equation*}
for any $\eps>0$.
Finally, we use that $a_\alpha(x,h,\nabla_s h)\in BUC([0,T];C^1(\Sigma))$ and
\begin{equation*}
\mathbb{E}_{1,T}\cap \U_\eps \ni h\mapsto a_\alpha(x,h,\nabla_s h)\in BUC([0,T];C^1(\Sigma))
\end{equation*}
is in $C^1$ with bounded derivative. Hence
\begin{equation*}
  a_\alpha(x,h,\nabla_s h)\nabla_s^\alpha h\in L_p(0,T;W^{1-\frac1p}_p(\Sigma))\hookrightarrow L_2(0,T;H^{\frac12}(\Sigma))
\end{equation*}
for every $h\in \U_\eps\cap \mathbb{E}_{1,T}$, $\eps>0$ and the mapping $h\mapsto K(h)$ is in $BC^1$ with respect to the corresponding spaces. Altogether we have proved the corollary.
\end{proof}

\section{Two-Phase Navier-Stokes System for given Interface}

In this section we assume that the family of interfaces $\{\Gamma(t)\}_{t>0}$ is known and we will solve the system \eqref{eq:1}, \eqref{eq:2}, \eqref{eq:3}, \eqref{eq:5}, \eqref{eq:6} together with the jump condition $\Jump{v}=0$.

For the following let $\Sigma\subseteq\Omega$ be a smooth compact $(n-1)$-dimensional reference manifold as in the previous section. Moreover, we assume that there is a domain $\widetilde{\Omega}_0^+\subset\subset \Omega$ such that $\Sigma=\partial\widetilde{\Omega}_0^+$. Moreover, we assume that
\begin{equation*}
  \Gamma(t)= \{x+h(t,x)\nu_{\Sigma}(x):x\in\Sigma\}=: \Gamma_{h(t)}
\end{equation*}
for some $h\in \U\cap \mathbb{E}_{1,T}$, where
\begin{equation*}
\mathbb{E}_{1,T}:=W^1_p(J; X_0)\cap L_p(J,X_1),
\end{equation*}
 $J=[0,T]$, and
\begin{equation*}
  X_0 = W^{1-\frac1p}_p(\Sigma),\qquad X_1= W^{4-\frac1p}_p(\Sigma)
\end{equation*}
for $p>\max(\tfrac{n+3}2,3)=3$, $n=2,3$, and $\nu_{\Sigma}(x)$ is the exterior normal on $\partial\widetilde{\Omega}_0^+= \Sigma$. Here $\U$ is as in \eqref{eq:U}.

For given $h\in \mathbb{E}_{1,T}$ let $\tilde{h}=Eh\in \widetilde{\E}_{1,T}$, where
$$
E\colon {\E}_{1,T}\to \widetilde{\E}_{1,T}:=W^1_p(J;W^1_p(\Sigma_a))\cap L_p(J; W^4_p(\Sigma_a))
$$
is a continuous extension operator and $\Sigma_a=\{x\in \Omega:\dist(x,\Sigma)< a\}$.
Then by Lion's trace method of real interpolation, we have
\begin{eqnarray}\label{eq:BUCEmbedding}
  \widetilde{\mathbb{E}}_{1,T}\hookrightarrow BUC([0,T]; \widetilde{X}_\gamma),\qquad \widetilde{X}_\gamma= W^{4-\frac3p}_p(\Sigma_a)\hookrightarrow C^2(\ol{\Sigma_a})
\end{eqnarray}
since $p>\frac{n+3}2$.
Moreover, if we equip $\mathbb{E}_{1,T}$ and $\widetilde{\mathbb{E}}_{1,T}$ with the norms
\begin{eqnarray*}
  \|u\|_{\mathbb{E}_{1,T}}&=& \|u\|_{W^1_p(J; W^{1-\frac1p}_p(\Sigma))\cap L_p(J,W^{4-\frac1p}_p(\Sigma))} + \|u(0)\|_{{X}_\gamma},\\
  \|u\|_{\widetilde{\mathbb{E}}_{1,T}}&=& \|u\|_{W^1_p(J; W^1_p(\Sigma_a))\cap L_p(J,W^4_p(\Sigma_a))} + \|u(0)\|_{\widetilde{X}_\gamma},
\end{eqnarray*}
then the operator norm of the embedding (\ref{eq:BUCEmbedding}) is bounded in $T>0$.
Additionally, we have
\begin{equation*}
  \widetilde{\mathbb{E}}_{1,T}\hookrightarrow C^{1-\frac1p}([0,T];W^1_p(\Sigma_a)).
\end{equation*}
Interpolation with \eqref{eq:BUCEmbedding} implies
\begin{equation*}
  \widetilde{\mathbb{E}}_{1,T}\hookrightarrow C^{\tau}([0,T];B^{2+\frac{n}p}_{p,1}(\Sigma_a))\hookrightarrow C^{\tau}([0,T];C^2(\ol{\Sigma_a}))
\end{equation*}
for some $\tau>0$ since $p>\frac{n+3}2$. Here again all operator norms of the embeddings are bounded in $T>0$. We will need the following technical lemma:
\begin{lemma}
  For every $\eps>0$ the extension operator $E$ above can be chosen such that for every $0<T<\infty$
  \begin{equation*}
    \sup_{0\leq t\leq T}\|h(t,\Pi(\cdot))-E{h}(t,\cdot)\|_{C^1(\Sigma_a)}\leq \eps\|h\|_{\mathbb{E}_{1,T}}.
  \end{equation*}
\end{lemma}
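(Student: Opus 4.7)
The strategy is to obtain $E$ by contracting an arbitrary base extension in the normal direction. Fix any continuous extension operator $E_0\colon \mathbb{E}_{1,T}\to \widetilde{\mathbb{E}}_{1,T}$ with trace $(E_0 h)|_\Sigma=h$; such an $E_0$ exists by standard lifting results, since $\mathbb{E}_{1,T}$ is precisely the trace space of $\widetilde{\mathbb{E}}_{1,T}$ on $\Sigma$. Using the tubular coordinates $(y,s)\in\Sigma\times(-a,a)$ on $\Sigma_a$, where $y=\Pi(x)$ and $s=d_\Sigma(x)$, I would set, for a small parameter $\delta\in(0,1]$ to be chosen below,
\[
(E_\delta h)(t,y,s):=(E_0 h)(t,y,\delta s).
\]
Since $E_0h|_\Sigma=h$ we still have $E_\delta h|_\Sigma=h$, so $E_\delta$ is an admissible candidate; intuitively the dilation compresses the normal variation of $E_0h$ into a thin shell about $\Sigma$ while keeping the trace intact.

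The verification then splits into two steps. First, a routine change of variables in the Sobolev integrals (with Jacobian $\delta$ in the normal variable) shows that $E_\delta\in\mathcal{L}(\mathbb{E}_{1,T},\widetilde{\mathbb{E}}_{1,T})$ for each $\delta\in(0,1]$, with operator norm of order $\delta^{-1/p}$; only boundedness for the fixed $\delta$ eventually chosen is needed. Second, I would invoke the embedding $\widetilde{\mathbb{E}}_{1,T}\hookrightarrow BUC([0,T];C^2(\overline{\Sigma_a}))$ noted after \eqref{eq:BUCEmbedding} (valid because $p>(n+3)/2$), whose operator norm is bounded uniformly in $T$, to obtain
\[
\sup_{t\in[0,T]}\|E_0 h(t,\cdot)\|_{C^2(\overline{\Sigma_a})}\leq C\,\|h\|_{\mathbb{E}_{1,T}}
\]
with $C$ independent of $T$. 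Because $(E_0 h)(t,y,0)=h(t,y)=h(t,\Pi(x))$ and $h\circ\Pi$ is independent of $s$, the fundamental theorem of calculus gives
\[
(E_\delta h)(t,y,s)-h(t,\Pi(x))=\int_0^{\delta s}(\partial_\tau E_0 h)(t,y,\tau)\,d\tau,
\]
with a completely analogous identity after applying a tangential derivative $\nabla_y$, while $\partial_s(E_\delta h)(t,y,s)=\delta\,(\partial_\tau E_0 h)(t,y,\delta s)$ and $\partial_s(h\circ\Pi)=0$. Estimating the integrands by the $C^2$-bound above yields
\[
\sup_{t\in[0,T]}\|(E_\delta h)(t,\cdot)-h(t,\Pi(\cdot))\|_{C^1(\overline{\Sigma_a})}\leq C\,\delta\,\|h\|_{\mathbb{E}_{1,T}}
\]
uniformly in $T$, and the choice $\delta:=\varepsilon/C$ finishes the proof.

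The main point to watch is the $T$-uniformity: one needs the constant in the embedding $\widetilde{\mathbb{E}}_{1,T}\hookrightarrow BUC([0,T];C^2(\overline{\Sigma_a}))$ to be independent of $T$, which is the reason the norm on $\widetilde{\mathbb{E}}_{1,T}$ was set up in the excerpt to include $\|u(0)\|_{\widetilde{X}_\gamma}$. A secondary subtlety is that the comparison function $h\circ\Pi$ does not itself belong to $\widetilde{\mathbb{E}}_{1,T}$, because constancy in the normal direction couples the tangential regularity to the full Sobolev exponent and costs $1/p$ derivatives; consequently one cannot simply take $E_\delta h=h\circ\Pi$, and the dilation trick is exactly what reconciles the required $W^4_p$-regularity with pointwise $C^1$-proximity.
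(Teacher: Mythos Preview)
Your proof is correct and follows essentially the same route as the paper's: start from an arbitrary extension $E_0$, rescale it in the normal direction by a small factor (your $\delta$ is the paper's $a'/a$), and use the $T$-uniform embedding $\widetilde{\mathbb{E}}_{1,T}\hookrightarrow BUC([0,T];C^2(\overline{\Sigma_a}))$ together with the fundamental theorem of calculus to convert the $C^2$-bound into $C^1$-smallness of order $\delta$. The paper writes the two steps in the opposite order---first proving the estimate on the thin tube $\Sigma_{a'}$ and then defining the rescaled extension $E'(h)(t,x)=(Eh)(t,\Pi(x)+\tfrac{a'}{a}d_\Sigma(x)\nu_\Sigma(\Pi(x)))$---but the content is identical.
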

\begin{proof}
  First of all, since $E{h}(t,x)=h(t,x)$ for all $x\in\Sigma$, $t\in [0,T]$,
  \begin{equation*}
    \sup_{0\leq t\leq T}\|h(t,\Pi(\cdot))-E{h}(t,\cdot)\|_{C^1(\Sigma_{a'})}\leq a' \sup_{0\leq t\leq T}\|E{h}(t,\cdot)\|_{C^2(\Sigma_{a'})}
\leq Ca' \|h\|_{\mathbb{E}_{1,T}}
  \end{equation*}
  for any $0<a'\leq a$, where $C$ is independent of $0<T<\infty$.
  Hence, if, for given $\eps>0$, $a'$ is chosen sufficiently small, we have
  \begin{equation}\label{eq:EpsEstim}
    \sup_{0\leq t\leq T}\|h(t,\Pi(\cdot))-E{h}(t,\cdot)\|_{C^1(\Sigma_{a'})}\leq \eps \|h\|_{\mathbb{E}_{1,T}}.
  \end{equation}
  If we now define $E'\colon {\E}_{1,T}\to \widetilde{\E}_{1,T}$ by
  \begin{equation*}
    (E' h)(t,x)= (Eh)\left(t,\Pi(x)+ \frac{a'}{a}d_\Sigma(x)\nu_\Sigma(\Pi(x))\right)\quad \text{for all}\ x\in \Sigma_a, t\in [0,T],
  \end{equation*}
  then $E'\colon {\E}_{1,T}\to \widetilde{\E}_{1,T} $ is an extension operator, which satisfies the statement of the lemma.
\end{proof}

For technical reasons, we modify  the Hansawa transformation $\Theta_h$ to
\begin{equation*}
  \widetilde{\Theta}_h(x,t)= x+\chi(d_{\Sigma}(x)/a)\tilde{h}(t,x)\nu_\Sigma(\Pi(x)),
\end{equation*}
where $\tilde{h}=Eh\in \widetilde{E}_{1,T}$ is the extension of $h$ to $\Omega$ as above.
Then
\begin{eqnarray*}
  \|\widetilde{\Theta}_h(.,t)-\Theta_h(.,t)\|_{C^1(\ol\Omega)}&\leq& C\|\tilde{h}(.,t)-h(\Pi(.),t)\|_{C^1(\Sigma_a)}
\end{eqnarray*}
for all $0\leq t\leq T$, where $C$ is independent of $h$ and $0<T<\infty$. If we now choose $\eps>0$ in \eqref{eq:EpsEstim} sufficiently small,
$\widetilde{\Theta}_h(.,t)\colon\Omega\to\Omega$ is again a $C^1$-diffeomorphism for every $0\leq t\leq T$. This can be shown by applying the contraction mapping principle to
\begin{equation*}
  x =\Theta_h^{-1}\left(\Theta_h(x)-\widetilde{\Theta}_h(x)+y\right)
\end{equation*}
for given $y\in \Omega$, which is equivalent to $\widetilde{\Theta}_h(x)=y$.
 Moreover, $\widetilde{\Theta}_h(\Sigma,t)= \Theta_h(\Sigma,t)=\Gamma(t)$ for all $0\leq t\leq T$.

Now let
\begin{equation*}
F_{h,t}=\widetilde{\Theta}_{h}(.,t)\circ \widetilde{\Theta}_{h}(.,0)^{-1}.
\end{equation*}
 Then
$F_{h,t}\colon \Omega\to \Omega$ with $F_{h,t}(\Omega^\pm_0)= \Omega^\pm(t)$ and $F_{h,t}(\Gamma_0)=\Gamma(t)$, where $\Gamma_0=\Gamma(0)=\partial\Omega^+(0)$.
Moreover, $F_h=(F_{h,t})_{t\in [0,T]}\in BUC([0,T];W^{4-\frac3p}_p(\Omega))\cap W^1_p(0,T;W^1_p(\Omega))$
and
\begin{eqnarray}\label{eq:LipschitzA}
  \|F_{h_1}-F_{h_2}\|_{C^\tau([0,T];C^2(\ol{\Omega}))}&\leq& C\|h_1-h_2\|_{\mathbb{E}_{1,T}},\\\label{eq:Lipschitz2}
  \|F_{h_1}-F_{h_2}\|_{W^1_p(0,T;W^1_p(\Omega))}&\leq& C\|h_1-h_2\|_{\mathbb{E}_{1,T}}
\end{eqnarray}
for all $\|h_j\|_{\mathbb{E}_{1,T}}\leq R$, $j=1,2$, where $C$ is independent of $h_j$ and $0<T<\infty$. Since $F_{h,0}=\Id_{\Omega}$ for all $h\in \mathbb{E}_{1,T}$, \eqref{eq:LipschitzA} implies
\begin{equation}\label{eq:Lipschitz1}
  \|F_{h_1}-F_{h_2}\|_{BUC([0,T];C^2(\ol{\Omega}))}\leq CT^\tau\|h_1-h_2\|_{\mathbb{E}_{1,T}}.
\end{equation}

Now we consider
\begin{alignat*}{2}
  \partial_t v +  v\cdot \nabla v -\mu^\pm \Delta v+\nabla \tilde{p} &= 0&\quad& \text{in} \ \Omega^\pm(t),t\in (0,T),\\
  \Div v &= 0&\quad& \text{in} \ \Omega^\pm(t),t\in (0,T),\\
  \Jump{v} &= 0 &\quad& \text{on} \ \Gamma(t),t\in (0,T),\\
\Jump{\nu_{\Gamma(t)}\cdot T(v,\tilde{p})} &= \STC \MC_{\Gamma(t)}\nu_{\Gamma(t)}&\quad& \text{on} \ \Gamma(t),t\in (0,T),\\
v|_{\partial\Omega} &=0 && \text{on}\ \partial\Omega,t\in (0,T),\\
v|_{t=0} &= v_0  &\quad& \text{on} \ \Omega^\pm(t),t\in (0,T).
\end{alignat*}
 Defining
 \begin{equation*}
u(x,t)=v(F_{t,h}(x),t),\quad q(x,t)=\tilde{p}(F_{t,h}(x),t),
 \end{equation*}
 the latter system can be transformed to
\begin{alignat}{2}\label{eq:TransNS1}
  \partial_t u -\mu^\pm \Delta u+\nabla q &= a^\pm(h;D_x)(u,q)+ \partial_t F_{h}\cdot \nabla_h u- u\cdot \nabla_h u&\quad& \text{in} \ Q_T^\pm,\\\label{eq:TransNS2}
  \Div u &= \Tr((I-A(h))\nabla u)=:g(h)u &\quad& \text{in} \ Q^\pm_T,\\
  \Jump{u} &= 0 &\quad& \text{on} \ \Gamma_{0,T},\\
\Jump{\nu_{\Gamma_0} \cdot T(u,q)} &= t(h;D_x) (u,q) + \sigma \widetilde{H}_{h}&\quad& \text{on} \ \Gamma_{0,T},\\
u|_{\partial\Omega} &=0&&\text{on}\ \partial\Omega_T,
\\\label{eq:TransNS5}
u|_{t=0} &= v_0  &\quad& \text{on} \ \Omega_0^\pm,
\end{alignat}
where $Q^\pm_T=(0,T)\times\Omega_0^\pm$, $\Omega_0^-=\Omega\setminus(\Gamma_0\cup \Omega_0^+)$, $\Gamma_{0,T}=(0,T)\times \Gamma_0$, $\partial\Omega_T=(0,T)\times \partial\Omega$.
Here
\begin{eqnarray*}
  a^\pm(h;D_x)(u,q)&=& \mu^\pm\Div_h(\nabla_h u)-\mu^\pm \Div \nabla u +(\nabla-\nabla_h)q,\\
 \nabla_h  &=& A(h)\nabla,\ \Div_h u = \Tr (\nabla_h u),\ A(h)=DF_{t,h}^{-T},\ \nu_h= \frac{A(h)\nu_{\Gamma_0}}{|A(h)\nu_{\Gamma_0}|},\\
t(h,D_x)(u,q)&=& [\![(\nu_{\Gamma_0}-\nu_h)\cdot (2\mu^\pm D u-qI)+2\nu_h\cdot \sym(\nabla u-\nabla_h u)]\!],\\
\tilde{\MC}_h(x) &=& \MC_{\Gamma(t)}(F_{h,t}(x))\nu_{\Gamma(t)}(F_{h,t}(x))\qquad \text{for all}\ x\in \Gamma_0.
\end{eqnarray*}
In the following let
$Y_T= Y_T^1\times Y_T^2$, where
\begin{eqnarray*}
  Y_T^1&=& \left\{ u\in BUC([0,T];H^1(\Omega)^n)\cap H^1(0,T;L_{2,\sigma}(\Omega)):u|_{\Omega_0^\pm} \in L_2(0,T;H^2(\Omega_0^\pm)^n)\right\}\\
Y_T^2&=& \left\{q\in L_2(0,T;L_{2,(0)}(\Omega)): \nabla q|_{\Omega_0^\pm} \in L_2 ((0,T)\times\Omega_0^\pm)^n\right\}.
\end{eqnarray*}

The main result of this section is:
\begin{theorem}\label{thm:Existence}
  Let $R>0$, $h_0\in U$. Then there is some $T_0=T_0(R)>0$ such that for every $0<T\leq T_0$ and $h\in \E_{1,T}\cap \U$ with $h|_{t=0}=h_0$ and $v_0\in H^1_{0}(\Omega)^n\cap L_{2,\sigma}(\Omega)$, $n=2,3$, with $\max\{\|h\|_{\E_{1,T}},\|v_0\|_{H^1_{0}(\Omega)}\}\leq R$ there is a unique solution $(u,p)=:\mathcal{F}_T(h,v_0)\in Y_T$ of \eqref{eq:TransNS1}-\eqref{eq:TransNS5}.
  Moreover, for every $\eps>0$
  \begin{equation*}
    \mathcal{F}_T\in BC^1(A_{\eps,R}\times\overline{B_{H^1_{0}}(0,R)} ;Y_T),
  \end{equation*}
  where
    \begin{equation*}
    A_{\eps,R}=\left\{ h\in \overline{B_{\mathbb{E}_{1,T}}(0,R)}: h(0)=h_0, \sup_{0\leq t\leq T}\|h(t)\|_{L^\infty(\Sigma)}\leq a-\eps\right\}.
  \end{equation*}
\end{theorem}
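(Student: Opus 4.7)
The plan is to reformulate \eqref{eq:TransNS1}-\eqref{eq:TransNS5} as a fixed-point equation in $Y_T$ based on the maximal $L^2$-regularity of the linear two-phase Stokes system with interface $\Gamma_0$ (Theorem~\ref{thm:MaxReg}), and then to deduce the $C^1$-regularity of $\mathcal{F}_T$ via the implicit function theorem. Schematically I write the system as
\begin{equation*}
\mathcal{L}(u,q) = \mathcal{N}(h)(u,q) + \mathcal{R}(h),\qquad u|_{t=0}=v_0,
\end{equation*}
where $\mathcal{L}$ is the linear Stokes operator with jump condition $\Jump{\nu_{\Gamma_0}\cdot T(u,q)}$ on $\Gamma_0$ and no-slip on $\partial\Omega$, $\mathcal{R}(h)$ carries the curvature source $\sigma\tilde{H}_h$, and $\mathcal{N}(h)$ collects the $h$-dependent lower-order perturbations $a^\pm(h;D_x)(u,q)$, $g(h)u$, $t(h;D_x)(u,q)$, the transport term $\partial_t F_h\cdot\nabla_h u$ and the quadratic convection $u\cdot\nabla_h u$. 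By Theorem~\ref{thm:MaxReg}, $\mathcal{L}$ is an isomorphism from $Y_T$ onto the corresponding data space augmented by the initial value, with inverse bounded uniformly in $T\in(0,T_0]$; in the $L^2$-setting the only compatibility required of $v_0$ is $v_0\in H^1_{0}(\Omega)^n\cap L_{2,\sigma}(\Omega)$, matching the hypothesis of the theorem.

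The main step is to verify that $(u,q)\mapsto \mathcal{L}^{-1}(\mathcal{N}(h)(u,q)+\mathcal{R}(h),v_0)$ is a self-map and contraction on a suitable closed ball in $Y_T$, for $T$ sufficiently small. Because $F_{h,0}=\Id$, the Lipschitz-type estimate \eqref{eq:Lipschitz1} and the embedding $\widetilde{\mathbb{E}}_{1,T}\hookrightarrow C^\tau([0,T];C^2(\overline{\Sigma_a}))$ force the geometric coefficients $A(h)-I$ and $\nu_h-\nu_{\Gamma_0}$ to be of order $O(T^\tau)$ in sup-norm, uniformly on $A_{\eps,R}$. This renders $a^\pm(h;D_x)$, $g(h)$ and $t(h;D_x)$ small as operators from $Y_T$ into the interior, divergence and jump data spaces of $\mathcal{L}^{-1}$; the transport term is handled via \eqref{eq:Lipschitz2} combined with parabolic Sobolev embeddings for $\nabla u$; and the quadratic term $u\cdot\nabla_h u$ is controlled by Sobolev embeddings of $Y^1_T$ which gain a small power of $T$. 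The curvature source $\sigma\tilde{H}_h$ lies in $L_2(0,T;H^{\frac12}(\Gamma_0))\cap H^{\frac14}(0,T;L_2(\Gamma_0))$ by Corollary~\ref{cor:Curvature}, exactly the jump-data space of $\mathcal{L}^{-1}$. The contraction mapping principle then gives existence and uniqueness of $\mathcal{F}_T(h,v_0)\in Y_T$.

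The main obstacle is the mismatch between the anisotropic $L^p$-regularity class of $h$ and the $L^2$-regularity of $(u,q)$: the curvature source and the ``principal-part'' perturbations $a^\pm(h;D_x)(u,q)$ and $t(h;D_x)(u,q)$ must be shown to have precisely the $L^2$-based interior and jump regularity required by the Stokes solver, which is where the multiplication and embedding arguments underlying Corollary~\ref{cor:Curvature}, together with the restriction $3<p\le 2(n+2)/n$, do the real work. In particular one has to use the extra time-regularity of $v|_{\Gamma_0}$ compared to $\nabla\mu|_{\Gamma_0}$ highlighted in the introduction to absorb the contribution of the jump perturbation into the Stokes data space.

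For the $C^1$-statement I would apply the implicit function theorem to
\begin{equation*}
\Phi(h,v_0,u,q):=\mathcal{L}(u,q)-\mathcal{N}(h)(u,q)-\mathcal{R}(h),
\end{equation*}
viewed as a map into the data space on $A_{\eps,R}\times \overline{B_{H^1_0}(0,R)}\times Y_T$. Every coefficient in $\mathcal{N}$ and $\mathcal{R}$ depends on $h$ through compositions of smooth nonlinearities with $A(h)$, $\partial_t F_h$ and $\nabla_h h$, to which \eqref{eq:CompOp}, \eqref{eq:AlgebraEstim} and Corollary~\ref{cor:Curvature} give $C^1$-dependence, while the $(u,q)$-dependence is polynomial. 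Since $D_{(u,q)}\Phi=\mathcal{L}-D_{(u,q)}\mathcal{N}(h)(u,q)$ differs from $\mathcal{L}$ by an $O(T^\tau)$-small operator, it remains an isomorphism for small $T$, and the implicit function theorem yields $\mathcal{F}_T\in BC^1(A_{\eps,R}\times\overline{B_{H^1_0}(0,R)};Y_T)$. The restriction to $A_{\eps,R}$ is what keeps $h$ strictly inside $\U$ so that $\widetilde{\Theta}_h$ remains a $C^1$-diffeomorphism uniformly in $t\in[0,T]$.
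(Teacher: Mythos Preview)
Your proposal is correct and follows essentially the same route as the paper: reformulate \eqref{eq:TransNS1}--\eqref{eq:TransNS5} as a fixed-point equation $w=L^{-1}G(w;h,v_0)$ in $Y_T$, use the $O(T^\alpha)$-smallness of the perturbation terms (packaged in the paper as Lemma~\ref{lem:C1Estim}) together with Theorem~\ref{thm:MaxReg} to obtain a contraction for small $T$, and then apply the implicit function theorem to $w-L^{-1}G(w;h,v_0)=0$ for the $BC^1$-statement. One remark is out of place: the extra time-regularity of $v|_{\Gamma_0}$ versus $\nabla\mu|_{\Gamma_0}$ plays no role in this theorem (where $\mu$ does not appear)---that asymmetry is exploited only later, in Corollary~\ref{cor:Volterra} and Section~\ref{sec:local}, when the Navier--Stokes solver is coupled back to the height-function equation.
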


We can formulate (\ref{eq:TransNS1})-(\ref{eq:TransNS5}) as an abstract fixed-point equation
\begin{equation}\label{eq:FixedPoint}
  Lw = G(w;h,v_0)\qquad \text{in}\ Z_T
\end{equation}
for $w\in Y_T$, where
\begin{eqnarray*}
  L(u,q)&=&
  \begin{pmatrix}
    \partial_t u -\mu^\pm \Delta u+\nabla q\\
    \Div u \\
    \Jump{\nu_{\Gamma_0}\cdot T^\pm (u,q)}\\
    u|_{t=0}
  \end{pmatrix}\\
  G(u,q;h,v_0)&=&
  \begin{pmatrix}
    a^\pm(h;D_x)u+ \partial_t F_{h}\cdot \nabla_h u- u\cdot \nabla_h u\\
    g(h)u-\tfrac{1}{|\Omega|}\int_\Omega g(h)u\sd x\\
    t(h;D_x) (u,q) + \sigma \widetilde{H}_{h}\\ v_0
  \end{pmatrix}
\end{eqnarray*}
for all $w=(u,q)\in Y_T$, where $Z_T= Z_T^1\times Z_T^2\times Z_T^3\times Z_T^4 $,
\begin{eqnarray*}
  Z_T^1&=& L_2((0,T)\times \Omega_0)^n,\qquad Z_T^4=H^1_{0}(\Omega)^n\cap L_{2,\sigma}(\Omega),\\
  Z_T^2&=& L_2(0,T;H^1_{(0)}(\Omega_0))\cap H^1(0,T;H^{-1}_{(0)}(\Omega_0)),\\
  Z_T^3&=& L_2(0,T;H^{\frac12}_2(\Gamma_0)^n)\cap H^{\frac14}(0,T;L_2(\Gamma_0)^n),
\end{eqnarray*}
and $Z_T^4= H^1_0(\Omega)^n\cap L_{2,\sigma}(\Omega)$.
Here  $H^1_{(0)}(\Omega_0)= H^1(\Omega_0)\cap L_{2,(0)}(\Omega)$ is normed by $\|\nabla \cdot\|_{L_2}$, $H^{-1}_{(0)}(\Omega)= (H^1_{(0)}(\Omega))'$.

First of all, let us note that \eqref{eq:FixedPoint} implies \eqref{eq:TransNS1}-\eqref{eq:TransNS5} except that \eqref{eq:TransNS2} is replaced by
\begin{equation*}
  \Div u = g(h)u -\frac1{|\Omega|} \int_\Omega g(h) \sd x.
\end{equation*}
But the latter equation implies \eqref{eq:TransNS2}, which can be seen as follows: Let $K(t)= \frac1{|\Omega|} \int_\Omega g(h(x,t)) \sd x$ and $v(x,t)= u(F_h^{-1}(x,t),t)$. Then $v(t)\in H^1_0(\Omega)$ for all $t\in (0,T)$ and therefore
\begin{eqnarray*}
  0 &=& \int_\Omega \Div v(x,t) \sd x= \int_{\Omega} \Tr (A(h(x,t))\nabla u(x,t)) \det DF_h(x,t) \sd x\\
 &=& K(t) \int_{\Omega} \det DF_h(x,t) \sd x
\end{eqnarray*}
for all $t\in [0,T]$. Since the last integral is positive, we obtain $K(t)=0$ for all $t\in (0,T)$.

\begin{lemma}\label{lem:C1Estim}
  Let $R>0$, $\eps>0$, and let $Y_T,Z_T$, $h_0$  be as above. Moreover, let
  \begin{equation*}
    A_{\eps,R} =\left\{h\in \mathbb{E}_{1,T}: \sup_{0\leq t\leq T}\|h(t)\|_{L^\infty(\Sigma)}\leq a-\eps, h(0)=h_0, \|h\|_{\mathbb{E}_{1,T}}\leq R \right\}.
  \end{equation*}
Then there is some $T_0>0$ such that for every $0<T\leq T_0$ the mapping $G$ defined above is well-defined and
$$
G\in C^1(\overline{B_{Y_T}(0,R')}\times A_{\eps,R}\times\overline{B_{H^1_0\cap L_{2,\sigma}}(0,R')}; Z_T).
$$
Moreover, there are some $C,\alpha>0$ such that
  \begin{equation*}
    \|G(w_1;h,v_0)-G(w_2;h,v_0)\|_{Z_T} \leq CT^\alpha\|w_1-w_2\|_{Y_T}
  \end{equation*}
for every $w_1,w_2\in \overline{B_{Y_T}(0,R')}$, $0<T\leq T_0$, $h\in A_{\eps,R} $, and $v_0\in \overline{B_{H^1_0\cap L_{2,\sigma}}(0,R)}$.
\end{lemma}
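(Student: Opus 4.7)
The central observation driving the entire proof is that the Hanzawa-type diffeomorphism is normalised by $F_{h,t}=\widetilde{\Theta}_h(\cdot,t)\circ \widetilde{\Theta}_h(\cdot,0)^{-1}$, so that $F_{h,0}=\Id_\Omega$ and hence $A(h)|_{t=0}=I$ for every $h\in \mathbb{E}_{1,T}$, independently of $h_0$. Combined with the embedding $\widetilde{\mathbb{E}}_{1,T}\hookrightarrow C^\tau([0,T];C^2(\overline{\Omega}))$, whose operator norm is uniform in $T>0$, this yields
\[
\|A(h)-I\|_{BUC([0,T];C^1(\overline{\Omega}))}+\|\nu_h-\nu_{\Gamma_0}\|_{BUC([0,T];C^1(\Gamma_0))}+\|g(h)\|_{BUC([0,T];C^1(\overline{\Omega_0}))}\leq CT^\tau
\]
whenever $\|h\|_{\mathbb{E}_{1,T}}\leq R$, and likewise for $\nabla-\nabla_h=(I-A(h))\nabla$. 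This is the universal source of the $T^\alpha$-smallness required for the Lipschitz estimate. The plan is to treat the four components of $G$ separately in the target spaces $Z_T^j$, and then deduce the $C^1$-regularity from standard composition arguments.

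For the $Z_T^1$-component, rewrite $a^\pm(h;D_x)(u,q)$ as $(I-A(h))$-multiples of second derivatives of $u$ and first derivatives of $q$; the $L_2((0,T)\times\Omega_0)$ bound with a $T^\tau$ factor is then immediate. The term $\partial_tF_h\cdot\nabla_h u$ is handled by H\"older in time: since $\partial_tF_h\in L_p(0,T;W^1_p(\Omega))$ with $p>3$ and $W^1_p\hookrightarrow L^\infty$, pairing against $\nabla u\in BUC([0,T];L_2(\Omega))$ yields a factor $T^{(p-2)/(2p)}$. The convective term $u\cdot\nabla_h u$ is treated via the Ladyzhenskaya-type estimate $\|u\cdot\nabla_h u\|_{L_2((0,T)\times\Omega_0)}\leq\|u\|_{L_4(0,T;L_4(\Omega))}\|\nabla u\|_{L_4(0,T;L_4(\Omega))}$; using the interpolation $L_2(0,T;H^2)\cap H^1(0,T;L_2)\hookrightarrow L_4(0,T;H^1)$ together with $H^1\hookrightarrow L_4$ for $n\leq 3$ controls the second factor, while a H\"older-in-time step on the first factor supplies the desired $T^\alpha$.

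For the $Z_T^2$-component $g(h)u-\tfrac{1}{|\Omega|}\int_\Omega g(h)u\sd x$, the $L_2(0,T;H^1_{(0)})$-norm is bounded by \eqref{eq:AlgebraEstim} applied to the product $g(h)\cdot u$, exploiting the $C^1$-smallness of $g(h)$ established above. The $H^1(0,T;H^{-1}_{(0)})$-norm is controlled by testing against $\varphi\in H^1_{(0)}$, using the product rule $\partial_t(g(h)u)=\partial_t g(h)\cdot u+g(h)\cdot\partial_tu$, and pairing $\partial_tA(h)\in L_p(0,T;L^\infty(\Omega))$ against $u\in BUC([0,T];H^1)$, together with $g(h)\in BUC([0,T];C^1)$ against $\partial_tu\in L_2(0,T;L_2)$. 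The mean-value subtraction is a continuous linear projection and presents no issue. The boundary term $t(h;D_x)(u,q)$ in $Z_T^3$ is the main obstacle: its $L_2(0,T;H^{1/2}(\Gamma_0))$-part follows from the anisotropic trace theorem applied to the $Y_T$-regularity of $u$ and $q$, combined with \eqref{eq:AlgebraEstim} and the $C^1$-smallness of $\nu_h-\nu_{\Gamma_0}$ and $A(h)-I$; its $H^{1/4}(0,T;L_2(\Gamma_0))$-part is more delicate. To control the latter I would use \eqref{eq:VecHEstim} to convert the $C^\tau$-in-time H\"older bound on the coefficients into an $H^s(0,T;L^\infty)$-bound carrying a $T^{1/2}$ factor, and combine this with a bilinear product estimate against the trace of $(u,\nabla u,q)$, which lies in $H^{1/4}(0,T;L_2(\Gamma_0))$ by the standard anisotropic trace theory for $Y_T$. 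The curvature contribution $\sigma\widetilde{H}_h$ lies in $Z_T^3$ directly by Corollary~\ref{cor:Curvature}; being independent of $w$, it cancels in the Lipschitz estimate.

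The $C^1$-regularity is then routine: the assignments $h\mapsto F_h$ and the derived maps $h\mapsto A(h),\nu_h,g(h)$ are $C^1$ into $BUC([0,T];C^1)$ (and into $L_p(0,T;W^1_p)$ for their time-derivatives) via the composition-operator result \eqref{eq:CompOp} applied slicewise and combined with $\widetilde{\mathbb{E}}_{1,T}\hookrightarrow C^\tau([0,T];C^2(\overline{\Omega}))$, while multiplication and differentiation are continuous bilinear/linear operations in the relevant Besov and Sobolev spaces. The curvature piece inherits $C^1$-regularity directly from Corollary~\ref{cor:Curvature}, and the dependence on $v_0$ is affine. Hence $G\in C^1$ on the indicated ball. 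The Lipschitz estimate with exponent $T^\alpha$, where $\alpha$ is the minimum of all the $T$-exponents accumulated above, then follows by collecting the bounds of the previous three paragraphs.
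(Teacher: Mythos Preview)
Your proposal is correct and mirrors the paper's own proof: both exploit $F_{h,0}=\Id_\Omega$ together with the uniform embedding $\widetilde{\mathbb{E}}_{1,T}\hookrightarrow C^\tau([0,T];C^2(\overline{\Omega}))$ to extract $T^\tau$-smallness of $A(h)-I$, then estimate the four components of $G$ separately and deduce $C^1$-regularity from the smoothness of matrix inversion and composition operators. The paper differs only in minor technical choices---it bounds the convective term via $L_\infty(0,T;W^1_p)$ rather than your $L_4$-interpolation, integrates by parts \emph{before} differentiating in time when estimating $g(h)u$ in $H^1(0,T;H^{-1}_{(0)})$ (writing $(g(h)u,\varphi)_\Omega=-(u,\Div((I-A(h)^T)\varphi))_\Omega$ first), and simply cites \cite{FreeSurface} for the $H^{1/4,1/2}$-bound on $t(h;D_x)$---but the overall architecture is the same.
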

\begin{proof}
  First of all, because of \eqref{eq:LipschitzA}, for any $\eps>0$ there are some $C,T_0>0$ such that
  \begin{equation*}
    \|D_x F_{h}-\operatorname{id}\|_{BUC([0,T];C^1(\ol{\Omega}))} \leq \eps\|h\|_{\mathbb{E}_{1,T}}
  \end{equation*}
  for all $0<T\leq T_0$, $\|h\|_{\mathbb{E}_{1,T}}\leq R$.
  Hence $F_{t,h}\colon\Omega\to\Omega$ is a $C^2$-diffeomorphism and $D_x F_h$ is invertible with uniformly bounded inverse for these $h,T$. Since matrix inversion is smooth on the set of invertible matrices,
  \begin{equation*}
    A(h)= DF_{h}^{-T}\in  C^\tau([0,T];C^1(\overline{\Omega}))
  \end{equation*}
  for some $\tau>0$  if $\|h\|_{\mathbb{E}_{1,T}}\leq R$. Moreover, interpolation of \eqref{eq:LipschitzA} and \eqref{eq:Lipschitz2} yields
  \begin{equation*}
   DF_h\in  C^{\frac12-\frac1{2p}+\frac{\tau}2}([0,T];C^0(\overline{\Omega}))
  \end{equation*}
due to $W^1_p(0,T;X)\hookrightarrow C^{1-\frac1p}([0,T];X)$, where the operator norm of the latter embedding is bounded in $0<T<\infty$ if $W^1_p(0,T;X)$ is normed by
\begin{equation*}
 \|f\|_{W^1_p(0,T;X)}:=\|(f,f')\|_{L_p(0,T;X)}+\|f(0)\|_{X}.
\end{equation*}
 Here we have also used that $\|f\|_{C^1(\overline{\Omega})}\leq C \|f\|_{C^0(\overline{\Omega})}^{\frac12}\|f\|_{C^2(\overline{\Omega})}^{\frac12}$ and $W^1_p(\Omega)\hookrightarrow C^0(\ol{\Omega})$.
 Hence
  \begin{equation*}
    A(h)= DF_{h}^{-T}\in  C^{\frac12-\frac1{2p}+\frac{\tau}2}([0,T];C^0(\overline{\Omega})).
  \end{equation*}
  Furthermore,
  \begin{alignat}{1}\label{eq:ASmoothness}
 A&\in BC^1(\overline{B_{\E_{1,T}}(0,R)};X)\quad \text{with}\\
\label{eq:X}
    X&= C^\tau([0,T];C^1(\overline{\Omega}))\cap W^1_p(0,T;L_p(\Omega))\cap  C^{\frac12-\frac1{2p}+\frac{\tau}2}([0,T];C^0(\overline{\Omega}))
  \end{alignat}
  again since matrix inversion is smooth.

Using the above observations, one easily obtains
\begin{equation*}
  \|(\nabla_{h}-\nabla)f\|_{L_2(0,T;H^k(\Omega_0^\pm))}\leq CT^\tau \|h\|_{\mathbb{E}_{1,T}} \|f\|_{L_2(0,T;H^{k+1}(\Omega_0^\pm))}
\end{equation*}
for all $f\in L_2(0,T;H^k(\Omega_0^\pm))$, $k=0,1$, $\|h\|_{\mathbb{E}_{1,T}}\leq R$. From this estimate, one derives
\begin{eqnarray*}
  \|a^\pm(h;D_x)(u,q)\|_{L_2((0,T)\times \Omega_0^\pm)}&\leq& CT^\tau\|h\|_{\mathbb{E}_{1,T}}\|(u,q)\|_{Y_T},\\
  \|g(h)u\|_{L_2(0,T;H^1(\Omega_0^\pm))}&\leq& CT^\tau\|h\|_{\mathbb{E}_{1,T}}\|u\|_{L_2(0,T;H^2(\Omega_0^\pm))},\\
\|v\cdot DF_{h}\nabla u\|_{L_2((0,T)\times \Omega_0^\pm)}&\leq& CT^{\frac1{2}}\|v\|_{L_\infty(0,T;W^1_p(\Omega_0^\pm))}\|u\|_{L_\infty(0,T;W^1_p(\Omega_0^\pm))}\\
&\leq& CT^{\frac1{2}}\|v\|_{Y^1_T}\|u\|_{Y^1_T}\\
\|\partial_tF_{h}\cdot \nabla u\|_{L_2((0,T)\times \Omega_0^\pm)} &= & \|\left(\partial_t F_{h}-\partial_t F_{0}\right)\cdot \nabla u\|_{L_2((0,T)\times \Omega_0^\pm)}\\
&\leq&  CT^{\frac12-\frac1{p}}\|h\|_{\mathbb{E}_{1,T}}\|u\|_{L_\infty(0,T;H^1(\Omega))}\\
&\leq& CT^{\frac12-\frac1{p}}\|h\|_{\mathbb{E}_{1,T}}\|u\|_{Y^1_T},
\end{eqnarray*}
where we have used (\ref{eq:Lipschitz2}) for the last estimate. Moreover,
\begin{eqnarray}
  \|(t(h,D_x)u\|_{Z^2_T}&\leq& CT^\alpha\|h\|_{\mathbb{E}_{1,T}}\|(u,q)\|_{Y_T}
\end{eqnarray}
for some $\alpha>0$ can be proved in the same way as in \cite[Proof of Lemma 4.3]{FreeSurface}.
In order to estimate $g(h)u\in H^1(0,T;H^{-1}_{(0)}(\Omega))$, we use that
\begin{eqnarray*}
  (g(h)u,\varphi)_{\Omega} &=& - (u, \Div ((I-A(h)^T) \varphi))_{\Omega}\qquad \text{for all}\ \varphi\in H^1_{(0)}(\Omega).
\end{eqnarray*}
 Therefore we obtain for all $\varphi \in H^1_{(0)}(\Omega)$ with $\|\nabla\varphi\|_{L_2(\Omega)}=1$
\begin{eqnarray*}
  \frac{d}{dt}(g(h)u,\varphi)_{\Omega} &=&- (\partial_t u, \Div ((I-A(h)^T) \varphi))_{\Omega}- (\nabla u, (\partial_t A(h)^T)  \varphi)_{\Omega}\\
&\equiv& \weight{F_1(t),\varphi}+ \weight{F_2(t),\varphi},
\end{eqnarray*}
where
\begin{eqnarray*}
  \lefteqn{\left|(\partial_t u(t), \Div ((I-A(h(t))^T) \varphi))_{\Omega}\right|}\\
 &\leq& C\|\partial_t u(t)\|_{L_2(\Omega)}\|I-A(h)^T\|_{L_\infty(0,T;C^1(\ol\Omega))}\\
  &\leq& CT^{\tau}\|\partial_t u(t)\|_{L_2(\Omega)}\|h\|_{C^\tau([0,T;C^1(\ol\Omega))}
  \end{eqnarray*}
and
  \begin{eqnarray*}
\lefteqn{\left|(\nabla u (t), (\partial_t (I-A(h(t)))^T \varphi))_{\Omega}\right|}\\
&\leq & C\|u\|_{L_\infty(0,T;H^1(\Omega))}\|\|\partial_t E(h(t))\|_{W^1_p(\Omega)}\\
&\leq &C\|u\|_{Y_T^1}\|\partial_t A(h(t))\|_{W^1_p(\Omega)}
\end{eqnarray*}
for all $t\in (0,T)$.  Hence
\begin{eqnarray*}
  \|F_1\|_{L_2(0,T;H^{-1}_{(0)})}&\leq& C T^\tau\|\partial_t u\|_{L_2(\Omega\times (0,T))}\|h\|_{C^\tau([0,T];C^1(\ol\Omega))} \\
  \|F_2\|_{L_2(0,T;H^{-1}_{(0)})}&\leq& C\|u\|_{Y_T^1}\|\partial_t A(h(t))\|_{L_2(0,T;W^1_p)}\leq C T^{\frac12-\frac1p}\|u\|_{Y_T^1}\|h\|_{\E_{1,T}}.
\end{eqnarray*}
and therefore
\begin{equation*}
      \|\partial_t g(h)u\|_{L_2(0,T;H^{-1}_{(0)})} \leq C(R) T^{\min(\tau,\frac12-\frac1{p})}\|h\|_{\E_{1,T}}\|u\|_{Y^1_T}
\end{equation*}
for all $h\in \E_{1,T}$ with $\|h\|_{\E_{1,T}}\leq R$. Here we have used that $A\in BC^1(A_{\eps,R};X)$, where $X$ is as in \eqref{eq:X}.

Finally, it remains to estimate the term $\tilde{H}_h$. To this end we use that
\begin{equation*}
  \tilde{H}_h = \left(K(h)\circ \theta_{h_0}^{-1}\right)\nu_h
\end{equation*}
where ${\theta}_{h_0}:= \tilde{\Theta}_h(\cdot,0)|_{\Sigma}\colon \Sigma\to \Gamma_0$ bijectively. Here
\begin{equation*}
  K \in BC^1(A_{\eps,R};H^{\frac14}(0,T;L_2(\Sigma))\cap L_2(0,T;H^{\frac12}(\Sigma)))
\end{equation*}
because of Corollary~\ref{cor:Curvature}. Since ${\theta}_{h_0}\in C^2(\Sigma)^n$ is independent of $t$ and $h$, the same is true for $K(\cdot)\circ {\theta}_{h_0}^{-1}$ with $\Sigma$ replaced by $\Gamma_0$. Because of \eqref{eq:ASmoothness}, we have for $\tilde{H}(h):= \tilde{H}_h$ for all $h\in A_{\eps,R}$
\begin{equation*}
  \tilde{H} \in BC^1(\overline{B_{\mathbb{E}_{1,T}}(0,R)};H^{\frac14}(0,T;L_2(\Gamma_0))\cap L_2(0,T;H^{\frac12}(\Gamma_0))).
\end{equation*}
Altogether, since all terms in $G$ are linear or bilinear in $(u,q)$ and $A(h)$, these considerations imply that $G\in BC^1(\overline{B_{Y_T}(0,R)}\times A_{\eps,R}\times  \overline{B_{H^1(\Omega)^n}(0,R)}; Z_T)$ and
\begin{equation}\label{eq:GEstim}
  \|G(w_1;h,v_0)-G(w_2;h,v_0)\|_{Z_T}\leq CT^{\alpha'}\|w_1-w_2\|_{Y_T}
\end{equation}
for all $w_j=(u_j,q_j)\in Y_T$ with $\|w_j\|_{Y_T}\leq R$, $h\in A_{\eps,R}$, $v_0\in \overline{B_{Z_4}(0,R)}$ and $0<T\leq T_0$ for some $\alpha'>0$.
\end{proof}

\begin{proof*}{of Theorem~\ref{thm:Existence}}
Let $\eps>0$. Using Lemma~\ref{lem:C1Estim} and choosing $T_0>0$ sufficiently small,
\begin{equation*}
  L^{-1}G(.;h,v_0)\colon \overline{B_{Y_T}(0,R')}\to \overline{B_{Y_T}(0,R')}
\end{equation*}
becomes a contraction and is invertible if $h\in A_{\eps,R}$ and $\|v_0\|_{H^1(\Omega)}\leq R$, where
\begin{equation*}
  R'=2\sup\left\{\|L^{-1}G(0;h,v_0)\|_{Y_T}: h\in A_{\eps,R},\|v_0\|_{H^1(\Omega)}\leq R\right\}.
\end{equation*}
 Hence for every $(h,v_0)\in A_{\eps,R}\times\overline{B_{Z_4}(0,R)}$
there is a unique $w=:\mathcal{F}_T(h,v_0)\in \overline{B_{Y_T}(0,R')}$ such that
\begin{equation*}
  w= L^{-1}G(w;h,v_0).
\end{equation*}
Moreover, \eqref{eq:GEstim} implies
\begin{equation*}
  \|L^{-1}D_wG(w;h,v_0)\|_{\mathcal{L}(Z_T)}\leq CT^{\alpha'}\leq \frac12
\end{equation*}
for all $w_j=(u_j,q_j)\in Y_T$ with $\|w_j\|_{Y_T}\leq R$, $(h,v_0)\in A_{\eps,R}\times\overline{B_{H^1_{0}\cap L_{2,\sigma}}(0,R)}$, and $0<T\leq T_0$ if $T_0$ is sufficiently small. Hence we can apply the implicit function theorem to
\begin{equation*}
  F(w;h,v_0)= w- L^{-1}G(w;h,v_0)=0
\end{equation*}
and conclude that
\begin{equation*}
\mathcal{F}_T\in BC^1\left(A_{\eps,R}\times \overline{B_{Z_4}(0,R)}; \overline{B_{Y_T}(0,R')}\right)
\end{equation*}
 since $D_wF(w;h,v_0)$ is invertible for all $w\in \ol{B_{Y_T}(0,R')}$, $h\in A_{\eps,R}, v_0\in \overline{B_{Z_T^4}(0,R)}$.
\end{proof*}

Finally we obtain that the mapping $h\mapsto (\nu_{h}\cdot u)\circ  (\Theta_h|_{t=0})|_{\Sigma}$ satisfies the conditions to apply the general result of \cite{AmannQuasiLinear}:
\begin{corollary}\label{cor:Volterra}
    Let $R,\eps>0$, $T_0=T_0(R)>0$, $A_{\eps,R}$, and $\mathcal{F}_T$ be as in Theorem~\ref{thm:Existence}. For every $h\in A_{\eps,R}$, $v_0\in H^1_{0}(\Omega)^n\cap L_{2,\sigma}(\Omega)$ with $\|v_0\|_{H^1_{0}(\Omega)}\leq R$ let
    \begin{equation*}
      \mathcal{G}_T(h;v_0):= (\nu_{h}\cdot u)\circ  (\Theta_h|_{t=0})|_{\Sigma},
    \end{equation*}
 where $(u,p)=\mathcal{F}_T(h,v_0)$, $0<T\leq T_0$.
Then there is some $q>p$ such that $\mathcal{G}_T\in C^1(A_{\eps,R}\times\overline{B_{H^1_{0}\cap L_{2,\sigma}}(0,R)} ;L_q(0,T;X_0))$. Moreover, if $h_1|_{[0,T']}=h_2|_{[0,T']}$ for some $0<T'\leq T$, then $\mathcal{G}_T(h_1;v_0)|_{[0,T']}=\mathcal{G}_T(h_2;v_0)|_{[0,T']}$, i.e., the mapping $h\mapsto \mathcal{G}_T(h;v_0)$ is a Volterra map in the sense of \cite{AmannQuasiLinear}.
\end{corollary}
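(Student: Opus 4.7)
The plan is to combine the $C^1$-regularity of $\mathcal{F}_T$ established in Theorem~\ref{thm:Existence} with a gain-of-time-regularity trace embedding for the velocity, and to extract the Volterra property from the uniqueness clause of that same theorem.

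First I would show that the Dirichlet trace to $\Gamma_0$ maps $Y_T^1$ continuously into $L_q(0,T; W^{1-1/p}_p(\Gamma_0)^n)$ for some $q>p$. Every $u\in Y_T^1$ satisfies $u\in H^1(0,T;L_2(\Omega))\cap L_2(0,T; H^2(\Omega_0^\pm)^n)$, so complex interpolation yields
\begin{equation*}
  u\in H^\theta(0,T;H^{2(1-\theta)}(\Omega_0^\pm)^n)\qquad\text{for every}\ \theta\in[0,1].
\end{equation*}
Choosing $\theta$ just above $\tfrac12-\tfrac1p$ and below $\tfrac12-\tfrac{n}{4}+\tfrac{n}{2p}$, which is feasible precisely because $p\le 2(n+2)/n$, Sobolev embedding gives $H^{2(1-\theta)}(\Omega_0^\pm)\hookrightarrow W^1_p(\Omega_0^\pm)$, while $H^\theta(0,T;\cdot)\hookrightarrow L_q(0,T;\cdot)$ for some $q>p$. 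Composing with the bounded spatial trace $W^1_p(\Omega_0^\pm)\to W^{1-1/p}_p(\Gamma_0)$ then produces the desired continuous linear map $u\mapsto u|_{\Gamma_0}$.

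Next, because $h(0)=h_0$ is fixed throughout $A_{\eps,R}$, the pullback map $\theta_{h_0}:=\widetilde{\Theta}_h(\cdot,0)|_\Sigma\colon\Sigma\to\Gamma_0$ is independent of $h$, so precomposition with $\theta_{h_0}$ is a fixed bounded isomorphism $W^{1-1/p}_p(\Gamma_0)\to W^{1-1/p}_p(\Sigma)=X_0$. By \eqref{eq:ASmoothness}, $h\mapsto A(h)$ is $BC^1$ into $C^\tau([0,T];C^1(\overline{\Omega}))$, and since $A(h)\nu_{\Gamma_0}$ is uniformly bounded away from $0$ on $A_{\eps,R}$, the same holds for $h\mapsto \nu_h$. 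Pointwise multiplication by an element of $C^\tau([0,T];C^1(\Gamma_0))$ acts boundedly on $L_q(0,T;W^{1-1/p}_p(\Gamma_0))$, since $C^1(\Gamma_0)$ embeds into the pointwise multipliers of $W^{1-1/p}_p(\Gamma_0)$. Chaining the $C^1$-dependence of $\mathcal{F}_T$ given by Theorem~\ref{thm:Existence}, the linear trace, the $BC^1$-dependence of $\nu_h$ on $h$, and the fixed pullback $\theta_{h_0}$ through the chain rule produces $\mathcal{G}_T\in C^1(A_{\eps,R}\times\overline{B_{H^1_0\cap L_{2,\sigma}}(0,R)};L_q(0,T;X_0))$.

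For the Volterra property, suppose $h_1|_{[0,T']}=h_2|_{[0,T']}$ for some $0<T'\le T$. Then every coefficient of the transformed system \eqref{eq:TransNS1}--\eqref{eq:TransNS5}, namely $A(h)$, $\partial_t F_h$, $g(h)$, $t(h;D_x)$, and $\widetilde{H}_h$, coincides for $h_1$ and $h_2$ on $[0,T']$, and the initial datum $v_0$ is the same; hence the restrictions $(u_j,q_j)|_{[0,T']}:=\mathcal{F}_T(h_j,v_0)|_{[0,T']}$ both lie in $Y_{T'}$ and solve the same initial-boundary value problem, so they agree by the uniqueness part of Theorem~\ref{thm:Existence}. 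Since $\nu_h$ depends only on $h$ pointwise in time and $\theta_{h_0}$ depends only on $h_0$, this forces $\mathcal{G}_T(h_1;v_0)|_{[0,T']}=\mathcal{G}_T(h_2;v_0)|_{[0,T']}$. The main obstacle is the sharp interpolation/embedding step producing the gain $q>p$; it is precisely at this point that the upper bound $p\le 2(n+2)/n$ is used in an essential way, since the required chain of inclusions fails for larger $p$.
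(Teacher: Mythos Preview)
Your proposal is correct and follows essentially the same route as the paper: gain time integrability via interpolation of $H^1(J;L_2)\cap L_2(J;H^2)$ into $L_q(J;W^1_p)$ for some $q>p$, take the spatial trace to $\Gamma_0$, multiply by $\nu_h$ (which depends $C^1$ on $h$), pull back by the fixed $C^2$-diffeomorphism $\theta_{h_0}$, and derive the Volterra property from the uniqueness clause of Theorem~\ref{thm:Existence}. Your single mixed-derivative interpolation $H^1(J;L_2)\cap L_2(J;H^2)\hookrightarrow H^\theta(J;H^{2(1-\theta)})$ is in fact a slightly cleaner, dimension-uniform version of the paper's case-split argument (which goes via $L_4(J;H^{3/2})$ for $n=3$ and $L_4(J;W^1_4)$ for $n=2$), but the content is the same and the constraint $p<2(n+2)/n$ enters at exactly the same point.
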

\begin{proof}
  First let $n=3$. Then by interpolation
  \begin{alignat*}{1}
    H^1(J;L_2(\Omega))&\cap L_2(J;H^2(\Omega))\hookrightarrow L_4(J;H^{\frac32}(\Omega))\cap L_2(J;H^2(\Omega))\\
&\hookrightarrow L_q(J;H^s(\Omega))\hookrightarrow L_q(J;W^1_p(\Omega))
  \end{alignat*}
where
\begin{equation*}
  s=1+\frac{n}2-\frac{n}p\in \left(\frac32,2\right),\quad \frac1q= \frac{n}2\left(\frac12-\frac1p\right)\in \left(0,\frac1p\right)
\end{equation*}
since $3<p<\frac{10}3$ and $n=3$. If $n=2$, we use that
\begin{equation*}
  H^1(J;L_2(\Omega))\cap L_2(J;H^2(\Omega))\hookrightarrow L_4(J;W^1_4(\Omega))\hookrightarrow L_4(J;W^1_p(\Omega)).
\end{equation*}
 Hence
$\mathcal{F}_T\in C^1(A_{\eps,R}\times\overline{B_{H^1_{0}}(0,R)} ;L_q(J;W^1_p(\Omega))$ for some $q>p$. The rest of the first statement follows from the trace theorem, the fact that
\begin{equation*}
h\mapsto \nu_h\in C^1(\overline{B_{\E_{1,T}}(0,R)},BUC([0,T];C^1(\ol{\Omega}))),
\end{equation*}
and that $\Theta_h|_{x\in\Sigma,t=0}\colon \Sigma\to \Gamma_0$ is a $C^2$-diffeomorphism.

Finally, the Volterra property follows easily from the fact that the solution of \eqref{eq:TransNS1}-\eqref{eq:TransNS5} on a time interval $(0,T)$ is also a solution of \eqref{eq:TransNS1}-\eqref{eq:TransNS5} on $(0,T')$ for any $0<T'<T$ (after restriction) and the uniqueness of the solution.
\end{proof}

\section{Local Well-Posedness}\label{sec:local}

In this section we show that the system \eqref{eq:1}-\eqref{eq:6} admits a unique local-in-time solution by reducing the whole system to a single quasilinear evolution equation for the height function $h$. For this purpose we use the solution operator obtained in the previous section and the solution operator for the (transformed) chemical potential coming from \eqref{eq:LWP3}-\eqref{eq:LWP4}.

We transform \eqref{eq:2'}, \eqref{eq:4}, \eqref{eq:4'} and \eqref{eq:5'} to the fixed domain $\Omega\backslash\Sigma$, with $\Sigma\subset\Omega$ as in the previous section, by means of the Hanzawa transform. This yields
\begin{alignat}{2} \label{eq:LWP1}
m\Delta_h\eta&=0 &\quad& \text{in}\ \Omega_T\backslash\Sigma_T,\\
\partial_t h-(\nu_h\cdot u)\circ  (\Theta_h|_{t=0})|_{\Sigma}&=-m\Jump{\nu_h\cdot\nabla_h\eta} &\quad& \text{on}\ \Sigma_{T},\\
\eta|_{\Sigma}&=\sigma K(h)&\quad& \text{on}\ \Sigma_{T},\\
\nu_\Omega\cdot m\nabla\eta&=0&\quad& \text{on}\ \partial\Omega_T,\\
h|_{t=0} &= h_0  &\quad& \text{on} \ \Sigma,\label{eq:LWP2}
\end{alignat}
where $\Sigma_T:=(0,T)\times\Sigma$, $\eta(t,x):=\mu(F_{t,h}(x),t)$ and $K(h)$ denotes the transformed mean curvature operator. Assume that we already know a solution $(u,h)\in Y_T^1\times\mathbb{E}_{1,T}$. Then we may use Corollary \ref{cor:Volterra} to write $(\nu_h\cdot u)\circ  (\Theta_h|_{t=0})|_{\Sigma}=\mathcal{G}_T(h;v_0)$. Consider the elliptic (time dependent) problem
\begin{alignat}{2}\label{eq:LWP3}
   m\Delta_h\eta&=0 &\quad& \text{in}\ \Omega_T\backslash\Sigma_T,\\
  \eta|_{\Sigma}&=\sigma K(h)&\quad& \text{on}\ \Sigma_{T},\\
\nu_\Omega\cdot m\nabla\eta&=0&\quad& \text{on}\ \partial\Omega_T.\label{eq:LWP4}
\end{alignat}
If
$$h\in \mathbb{E}_{1,T}^U:=\{h\in \mathbb{E}_{1,T}:h([0,T])\in U\},$$
where $U\subset W_p^{4-4/p}(\Sigma)$ is a sufficiently small neighborhood of zero, then \eqref{eq:LWP3}-\eqref{eq:LWP4} admits a unique solution $\eta=:S(h)K(h)\in L_p(0,T;W_p^{2}(\Omega\backslash\Sigma))$. Defining $B(h)\eta:=m\Jump{\nu_h\cdot\nabla_h\eta}$ we may reduce \eqref{eq:LWP1}-\eqref{eq:LWP2} to a single equation for $h$:
$$
\partial_t h+B(h)S(h)K(h)=\mathcal{G}_T(h;u_0)\ \text{on}\ \Sigma_{T},\quad h(0)=h_0\ \text{on}\ \Sigma.
$$
Employing the decomposition from Lemma \ref{lem:LCO1} we may write
\begin{equation}\label{eq:LWP5}
\partial_t h+\mathcal{A}(h)h=F_{1,T}(h)+F_2(h)\ \text{on}\ \Sigma_{T},\quad h(0)=h_0\ \text{on}\ \Sigma,
\end{equation}
where $\mathcal{A}(h):=B(h)S(h)P(h)$, $F_{1,T}(h):=\mathcal{G}_T(h;v_0)$ and $F_2(h):=-B(h)S(h)Q(h)$. Note that $\mathcal{A}$ and $F_2$ are nonlocal in $x$ but local in $t$, whereas $F_{1,T}$ is nonlocal operator in $t$ and $x$, but it has the Volterra property with respect to $t$. Firstly we show that $F_2\in C^1(U; W_p^{1-1/p}(\Sigma))$. By Lemma \ref{lem:LCO1} we have
$$Q\in C^1(U; W_p^{2-1/p}(\Sigma)).$$
Next we show that $S\in C^1(U;\mathcal{L}(W_p^{2-1/p}(\Sigma),W_p^2(\Omega\backslash\Sigma)))$. Writing
$$
\Delta_h=\sum_{j,k=1}^{n-1}a_{jk}^h\partial_j\partial_k+\sum_{j=1}^{n-1}a_j^h\partial_j
$$
with coefficients
$$a_{jk}^h=a_{jk}(x,h,\nabla h,\nabla^2 h),\quad a_{j}^h=a_{j}(x,h,\nabla h,\nabla^2 h),$$
depending smoothly on $(x,h,\nabla h,\nabla^2 h)$, it is not hard to see that
$$a_{jk}^h(\cdot,h,\nabla h,\nabla^2 h),a_{j}^h(\cdot,h,\nabla h,\nabla^2 h)\in BUC(\Omega\backslash\Sigma)^2,$$
for all $h\in U$. Here we used the embedding
$$W_p^{2-4/p}(\Sigma)\hookrightarrow C(\Sigma)$$
whenever $p>(n+3)/2$. This in turn yields that
$$h\mapsto\Delta_h\in C^1(U;\mathcal{L}(W_p^2(\Omega\backslash\Sigma),L_p(\Omega))).$$
We can now write
$$S(h)g=(\Delta_h,\gamma,\gamma_{N,\partial\Omega})^{-1}(0,g,0)$$
for some function $g\in W_p^{2-1/p}(\Sigma).$
Here $\gamma$ denotes the trace operator to $\Sigma$ and $\gamma_{N,\partial\Omega}$ stands for the Neumann derivative on $\partial\Omega$. Since the mapping $h\mapsto (\Delta_h,\gamma,\gamma_{N,\partial\Omega})$ belongs to
$$
C^1(U;\mathcal{L}(W_p^2(\Omega\backslash\Sigma),L_p(\Omega)\times W_p^{2-1/p}(\Sigma)\times W_p^{1-1/p}(\partial\Omega))),
$$
and inversion is smooth, we may conclude that
$$S\in C^1(U;\mathcal{L}(W_p^{2-1/p}(\Sigma),W_p^2(\Omega\backslash\Sigma))).$$
Finally, we show that $B\in C^1(U;\mathcal{L}(W_p^{2}(\Omega\backslash\Sigma),W_p^{1-1/p}(\Sigma)))$. We may write $B(h)=\sum_{j=1}^{n-1}b_j^h\gamma\partial_j$, where the coefficients $b_j^h=b_j(x,h,\nabla h)$ depend smoothly on $(x,h,\nabla h)$. This yields $b_j(\cdot,h,\nabla h)\in C^1(\Sigma),$
for each $h\in U$ since $W_p^{3-4/p}(\Sigma)\hookrightarrow C^1(\Sigma)$ for $p>(n+3)/2$.
It follows readily that
$$h\mapsto b_j^h\gamma\partial_j\in C^1(U;\mathcal{L}(W_p^{2}(\Omega\backslash\Sigma),W_p^{1-1/p}(\Sigma))).$$
Summarizing we have shown that
$$F_2\in C^1(U; W_p^{1-1/p}(\Sigma)),$$
hence the desired assertion.

Concerning the mapping $h\mapsto\mathcal{A}(h)$, we would like to show that
$$h\mapsto\mathcal{A}(h)\in C^1(U;\mathcal{L}(W_p^{4-1/p}(\Sigma),W_p^{1-1/p}(\Sigma))).$$
But this is an immediate consequence of Lemma \ref{lem:LCO1}, since
$$h\mapsto P(h)\in C^1(U;\mathcal{L}(X_1,W_p^{2-1/p}(\Sigma))).$$
It has been shown in \cite[Proof of Theorem 4.1]{PSZ09} that $\mathcal{A}(0)$ has the property of maximal $L_p$-regularity in $X_0=W_p^{1-1/p}(\Sigma)$, that is for each given $f\in L_p(0,T;X_0)$ there exists a unique solution $h\in H_p^1(0,T;X_0)\cap L_p(0,T;X_1)$ of the problem
$$\partial_t h(t)+\mathcal{A}(0) h(t)=f(t),\quad t\in (0,T),\quad h(0)=0,$$
where $X_1=W_p^{4-1/p}(\Sigma)$.
If $U\subset W_p^{4-4/p}(\Sigma)$ is a sufficiently small neighborhood of zero, then, by a perturbation argument, also $\mathcal{A}(h_0)$ has maximal $L_p$-regularity, whenever $h_0\in U$.

Note that the principal part in \eqref{eq:LWP5} is local in time. Furthermore, by Corollary \ref{cor:Volterra}, we have
$$F_{1,T}\in C^1(A_{\eps,R}\times\overline{B_{H^1_{0}\cap L_{2,\sigma}}(0,R)} ;L_q(0,T;X_0)),$$
for some $q>p$. This means that the nonlocal term $F_{1,T}$ is somehow of lower order with respect to $t$.
Based on this fact we are in a situation to apply existence and uniqueness results for quasilinear evolution equations with main part being local in time. We show that the nonlocal term $F_1$ satisfies the Lipschitz estimate
\begin{equation}\label{eq:LWP6}
\|F_{1,T}(h_1)-F_{1,T}(h_2)\|_{L_p(0,T;X_0)}\le\kappa(T)\|h_1-h_2\|_{\mathbb{E}_{1,T}}
\end{equation}
for all $h_1,h_2\in B_{r,\mathbb{E}_{1,T}}$, where $\kappa(T)\to 0_+$ as $T\to 0_+$ and
$$B_{r,\mathbb{E}_{1,T}}:=\{h\in \mathbb{E}_{1,T}: \|h-h_*\|_{\mathbb{E}_{1,T}}\le r,\ h(0)=h_0\},\ r\in  (0,1].$$
Here $h_*\in\mathbb{E}_{1,T_0}$ solves the linear Cauchy problem
$$\partial_t h_*(t)+\mathcal{A}(0)h_*(t)=0,\quad t\in (0,T_0),\quad h_*(0)=h_0,$$
for each $T_0>0$. Let $T\in (0,T_0)$, $\delta>0$ such that $\|h_0\|_{W_p^{4-4/p}(\Sigma)}<\delta$. It follows that
\begin{align*}
\|h(t)&\|_{W_p^{4-4/p}(\Sigma)}\\
&\le \|h(t)-h_*(t)\|_{W_p^{4-4/p}(\Sigma)}+\|h_*(t)-h_0\|_{W_p^{4-4/p}(\Sigma)}+
\|h_0\|_{W_p^{4-4/p}(\Sigma)}\\
&\le Mr+\sup_{t\in[0,T_0]}\|h_*(t)-h_0\|_{W_p^{4-4/p}(\Sigma)}+\delta\\
&\le a-\varepsilon,
\end{align*}
for all $h\in B_{r,\mathbb{E}_{1,T}}$, provided that $r,T_0,\delta>0$ are sufficiently small. Here $a>0$ denotes the number in the definition of the set $A_{\varepsilon,R}$ in Theorem \ref{thm:Existence}.

Choosing $R\ge r+\|h_*\|_{\mathbb{E}_{1,T_0}}$ we obtain that
$B_{r,\mathbb{E}_{1,T}}\subset A_{\varepsilon,R}$ for all $T\in (0,T_0)$. It holds that
$$F_{1,T}(h_1)-F_{1,T}(h_2)=\left[\int_0^1DF_{1,T}( h_2+\theta(h_1-h_2))d\theta\right](h_1-h_2).$$
Hence
\begin{align*}
\|F_{1,T}(h_1)&-F_{1,T}(h_2)\|_{L_q(0,T;X_0)}\le \|F_{1,T_0}(e(h_1))-F_{1,T_0}(e(h_2))\|_{L_q(0,T_0;X_0)}\\
& \le C\|e(h_1)-e(h_2)\|_{\mathbb{E}_{1,T_0}}\\
&\le CM\|h_1-h_2\|_{\mathbb{E}_{1,T}}+CM\|h_1(0)-h_2(0)\|_{W_p^{4-4/p}(\Sigma)}
\end{align*}
for all $h_1,h_2\in B_{r,\mathbb{E}_{1,T}}$, where
$$C:=\sup\{\|DF_{1,T_0}(h)\|_{\mathcal{L}(\mathbb{E}_{1,T_0};L_q(0,T_0;X_0))}:h\in B_{r,\mathbb{E}_{1,T_0}}\}>0.$$
and $e$ denotes an appropriate linear extension operator from $\mathbb{E}_{1,T}$ to $\mathbb{E}_{1,T_0}$, $T<T_0$, such that
$$\|e(h)\|_{\mathbb{E}_{1,T_0}}\le M\left(
\|h\|_{\mathbb{E}_{1,T}}+\|h(0)\|_{W_p^{4-4/p}(\Sigma)}\right)$$
holds for all $h\in \mathbb{E}_{1,T}$ and $M>0$ does not depend on $T<T_0$ and $h$ (see e.g. \cite[Lemma 7.2]{AmannQuasiLinear}).

Since $q>p$, an application of H\"{o}lder's inequality yields
\begin{align*}
\|F_{1,T}(h_1)-F_{1,T}(h_2)\|_{L_p(0,T;X_0)}&\le T^{\frac{q-p}{pq}}\|F_{1,T}(h_1)-F_{1,T}(h_2)\|_{L_q(0,T;X_0)}\\
&\le T^{\frac{q-p}{pq}}CM\|h_1-h_2\|_{\mathbb{E}_{1,T}}
\end{align*}
for all $h_1,h_2\in B_{r,\mathbb{E}_{1,T}}$.
Therefore we can choose $\kappa(T)=T^{\frac{q-p}{pq}}CM$. In particular, the nonlocal term $F_{1,T}(h)$ is a small perturbation in $L_p(0,T;X_0)$ provided that $T>0$ is small enough. This can be seen as follows
\begin{align*}\|F_{1,T}(h)\|_{L_p(0,T;X_0)}&\le \|F_{1,T}(h)-F_{1,T}(h_*)\|_{L_p(0,T;X_0)}+\|F_{1,T}(h_*)\|_{L_p(0,T;X_0)}\\
&\le\kappa(T)r+T^{\frac{q-p}{pq}}\|F_{1,T}(h_*)\|_{L_q(0,T;X_0)},
\end{align*}
for all $h\in B_{r,\mathbb{E}_{1,T}}$ and the right side of the last inequality can be made as small as we wish, by decreasing $T>0$.

We may now follow e.g.\ the lines of the proof of \cite[Theorem 2.1]{KPW10} to conclude that for each initial value $h_0\in U$ there exists a possibly small $T>0$ such that \eqref{eq:LWP5} admits a unique solution $h\in H_p^1(0,T;X_0)\cap L_p(0,T;X_1)$ which depends (locally) Lipschitz continuously on the initial data $h_0$.

We have proven the following result.
\begin{theorem}
Let $3<p\le 2(n+2)/n$, $n=2,3$, $R>0$ and $U=B_{W_p^{4-4/p}(\Sigma)}(0,\delta)$. Then there exist a sufficiently small $\delta>0$ and $T>0$ such that the (transformed) system \eqref{eq:TransNS1}-\eqref{eq:TransNS5}, \eqref{eq:LWP1}-\eqref{eq:LWP2} has a unique solution
$$(u,q,\eta,h)\in Y_T^1\times Y_T^2\times L_p(0,T;W_p^{2}(\Omega\backslash\Sigma))\times\mathbb{E}_{1,T},$$
provided that $h_0\in U$ and $v_0\in H^1_{0}(\Omega)^n\cap L_{2,\sigma}(\Omega)$, $\|v_0\|_{H^1}\le R$.
\end{theorem}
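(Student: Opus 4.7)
The overall strategy is to reduce the full coupled system to a single quasilinear evolution equation for the height function $h$ on $\Sigma$ and then invoke a fixed-point argument based on maximal $L_p$-regularity. Using the Navier--Stokes solution operator $\mathcal{F}_T$ from Theorem~\ref{thm:Existence}, set $(u,q) = \mathcal{F}_T(h,v_0)$, so the right-hand side of the kinematic condition becomes $F_{1,T}(h) := \mathcal{G}_T(h;v_0)$ with $\mathcal{G}_T$ as in Corollary~\ref{cor:Volterra}. For $h$ in a small neighborhood $U$ of $0$ in $W_p^{4-4/p}(\Sigma)$, the transformed elliptic problem \eqref{eq:LWP3}--\eqref{eq:LWP4} is uniquely solvable and defines a solution operator $\eta = S(h)K(h)$. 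Using the splitting $K(h) = P(h)h + Q(h)$ from Lemma~\ref{lem:LCO1} and $B(h)\eta := m\Jump{\nu_h\cdot\nabla_h\eta}$, the system \eqref{eq:LWP1}--\eqref{eq:LWP2} collapses to
\begin{equation*}
\partial_t h + \mathcal{A}(h)h = F_{1,T}(h) + F_2(h), \qquad h(0)=h_0,
\end{equation*}
with $\mathcal{A}(h) := B(h)S(h)P(h)$ and $F_2(h) := -B(h)S(h)Q(h)$, where $\mathcal{A}$ and $F_2$ are local in time while $F_{1,T}$ is nonlocal but Volterra.

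Next I would check that all building blocks have the required regularity. Lemma~\ref{lem:LCO1} supplies $P,Q$ as $C^1$-mappings; smoothness of $S$ comes from the fact that the coefficients of $\Delta_h$ depend smoothly on $(x,h,\nabla h,\nabla^2 h)$ together with the embedding $W_p^{2-4/p}(\Sigma) \hookrightarrow C(\Sigma)$ (which uses $p>(n+3)/2$) and the smoothness of operator inversion; a parallel argument treats $B$. Composing gives $\mathcal{A} \in C^1(U;\mathcal{L}(X_1,X_0))$ and $F_2 \in C^1(U;X_0)$ with $X_0 = W_p^{1-1/p}(\Sigma)$, $X_1 = W_p^{4-1/p}(\Sigma)$. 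The linearization $\mathcal{A}(0)$ has maximal $L_p$-regularity on $X_0$ by \cite[Proof of Theorem~4.1]{PSZ09}, and a standard perturbation argument transfers this to $\mathcal{A}(h_0)$ provided $U$ is small enough.

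The main obstacle is the nonlocal-in-time term $F_{1,T}$: classical quasilinear theory insists that the right-hand side be local in $t$. I would circumvent this by exploiting the improved time-integrability in Corollary~\ref{cor:Volterra}, namely $F_{1,T} \in C^1(A_{\eps,R}\times\overline{B_{H^1_0\cap L_{2,\sigma}}(0,R)}; L_q(0,T;X_0))$ for some $q>p$. Centering on the solution $h_*$ of the linear Cauchy problem $\partial_t h_* + \mathcal{A}(0)h_* = 0$, $h_*(0) = h_0$, and working on balls $B_{r,\mathbb{E}_{1,T}}$ around $h_*$, one extends $h \in \mathbb{E}_{1,T}$ to $\mathbb{E}_{1,T_0}$ via a bounded extension operator as in \cite[Lemma~7.2]{AmannQuasiLinear}, applies the mean value theorem and H\"older's inequality on $(0,T)$ to obtain
\begin{equation*}
\|F_{1,T}(h_1)-F_{1,T}(h_2)\|_{L_p(0,T;X_0)} \leq T^{(q-p)/(pq)}CM\|h_1-h_2\|_{\mathbb{E}_{1,T}},
\end{equation*}
so that $F_{1,T}$ becomes an arbitrarily small Lipschitz perturbation as $T\to 0$. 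Combined with the local $C^1$-dependence of $\mathcal{A}$ and $F_2$ and the maximal regularity of $\mathcal{A}(h_0)$, the quasilinear contraction scheme of \cite[Theorem~2.1]{KPW10} then yields a unique solution $h \in H_p^1(0,T;X_0) \cap L_p(0,T;X_1)$ for sufficiently small $T$; the remaining unknowns are recovered via $(u,q) = \mathcal{F}_T(h,v_0)$ and $\eta = S(h)K(h)$, producing the full quadruple in the asserted regularity class.
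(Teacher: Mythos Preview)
Your proposal is correct and follows essentially the same approach as the paper: the reduction to a single quasilinear equation for $h$ via the operators $\mathcal{A}(h)=B(h)S(h)P(h)$, $F_2(h)=-B(h)S(h)Q(h)$, and $F_{1,T}(h)=\mathcal{G}_T(h;v_0)$, the $C^1$-regularity of $P,Q,S,B$, maximal $L_p$-regularity of $\mathcal{A}(0)$ from \cite{PSZ09}, the Lipschitz estimate $\|F_{1,T}(h_1)-F_{1,T}(h_2)\|_{L_p(0,T;X_0)}\le T^{(q-p)/(pq)}CM\|h_1-h_2\|_{\mathbb{E}_{1,T}}$ via extension and H\"older, and the final appeal to \cite[Theorem~2.1]{KPW10} all match the paper's argument. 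The only detail you leave implicit is the verification that $B_{r,\mathbb{E}_{1,T}}\subset A_{\eps,R}$ for small $r,T_0,\delta$, which the paper spells out but which is routine.
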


\section{Qualitative Behavior}\label{sec:Global}

This section is devoted to the long-time behavior of solutions to \eqref{eq:1}-\eqref{eq:6} starting close to equilibria. We will study the spectrum of the full linearization of the transformed two-phase Navier-Stokes/Mullins-Sekerka equations around an equilibrium. Since, among other things, the divergence-free-condition for the velocity field $v$ is destroyed under the Hanzawa transform, we have to split the solutions into two parts, one part which is divergence free and the remaining part which is not. The treatment of the first part is done by considering the so-called normal form of the equations in exponentially weighted spaces and the fact that the set of equilibria can be parameterized over the kernel of the linearization. The remaining part, which is not divergence free can be handled by the implicit function theorem.

For simplicity we assume that the dispersive phase is connected. Moreover, we assume for simplicity that $m=1$. (By a simple scaling in time one can always reduce to that case.) Note that the pressure $p$ as well as the chemical potential $\mu$ may be reconstructed by the semiflow $(v(t),\Gamma(t))$ as follows:
\begin{alignat*}{2}
\left(\nabla p|\nabla\phi\right)_{L_2} &= \left(\mu^\pm\Delta v-v\cdot\nabla v|\nabla \phi\right)_{L_2}&
\quad &\text{for all}\ \phi\in W^1_{2}(\Omega),\\
[\![ p]\!] &= 2[\![\mu^\pm (Dv)\nu_{\Gamma(t)}\cdot\nu_{\Gamma(t)}]\!]+\sigma H&
\quad &\text{ on } \Gamma(t),
\end{alignat*}
and
\begin{align*}
m\Delta \mu=0,&\quad t>0,\ x\in\Omega^\pm(t),\\
\mu|_{\Gamma(t)}=\sigma H,&\quad t>0,\ x\in\Gamma(t),\\
\nu_{\Omega}\cdot m\nabla \mu=0,&\quad t>0,\ x\in\partial\Omega.
\end{align*}
Therefore we may concentrate on the set of equilibria $\mathcal{E}$ for the flux $(v(t),\Gamma(t))$ which is given by
    $$\mathcal{E}=\{(0,S_R(x_0)),\ S_R(x_0)\subset\Omega\ \text{is a sphere}\}.$$
The linearization of the (transformed) two-phase Navier-Stokes-Mullins-Sekerka problem around an equilibrium $(0,\Sigma)\in\mathcal{E}$ reads as follows:
\begin{align}
\begin{split}\label{QB1}
\partial_t u-\mu^\pm\Delta u+\nabla q=f_u,&\quad t>0,\ x\in\Omega^\pm,\\
{\rm div}\ u=f_d,&\quad t>0,\ x\in\Omega^\pm,\\
-2[\![\mu^\pm Du]\!]\nu_\Sigma+[\![q]\!]\nu_\Sigma-\sigma(\mathcal{A}_\Sigma h)\nu_\Sigma=g_u,&\quad t>0,\ x\in\Sigma,\\
[\![u]\!]=0,&\quad t>0,\ x\in\Sigma,\\
u=0,&\quad t>0,\ x\in\partial\Omega,\\
\partial_t h-u\cdot\nu_\Sigma-[\![\partial_{\nu_{\Sigma}}\eta]\!]=g_h,&\quad t>0,\ x\in\Sigma,\\
\Delta \eta=f_\eta,&\quad t>0,\ x\in\Omega^\pm,\\
\eta|_{\Sigma}+\mathcal{A}_{\Sigma}h=g_\eta,&\quad t>0,\ x\in\Sigma,\\
\partial_\nu\eta=0,&\quad t>0,\ x\in\partial\Omega,\\
u(0)=u_0,&\quad x\in\Omega^\pm,\\
h(0)=h_0,&\quad x\in\Sigma,
\end{split}
\end{align}
where $\mathcal{A}_\Sigma=\frac{n-1}{R^2}I+\Delta_\Sigma$ and $\Delta_\Sigma$ denotes the Laplace-Beltrami operator on $\Sigma$. We want to reformulate \eqref{QB1} as an abstract evolution equation. To this end we introduce the Banach spaces $X_0:=L_{2,\sigma}(\Omega)\times W_p^{1-1/p}(\Sigma)$ and $\tilde{X}_1:=(L_{2,\sigma}(\Omega)\cap W_2^2(\Omega\setminus\Sigma)^n)\times W_p^{4-1/p}(\Sigma)$, where
    $$L_{2,\sigma}(\Omega):=\overline{\{u\in C_0^\infty(\Omega)^n:{\rm div}\ u=0\}}^{\|\cdot\|_{L_2(\Omega)}}.$$
Define a linear operator $A:D(A)\subset\tilde{X}_1\to X_0$ by means of
    $$A(u,h):=(-\mu^\pm\Delta u+\nabla q,-u\cdot\nu_\Sigma-[\![\partial_{\nu_{\Sigma}}\eta]\!]),$$
with domain
    $$D(A)=\{(u,h)\in \tilde{X}_1:u=0\ \text{on}\ \partial\Omega,\ [\![u]\!]=0\ \text{on}\ \Sigma\}.$$
Here $q\in H_{(0)}^1(\Omega\backslash\Sigma)$ and $\eta\in W_p^2(\Omega\backslash\Sigma)$ are determined as the solutions of the elliptic transmission problems
\begin{alignat*}{2}
\left(\nabla q|\nabla\phi\right)_{L_2} &= \left(\mu^\pm\Delta u|\nabla \phi\right)_{L_2}
&\quad&\text{for all}\ \phi\in H^1(\Omega),\\
[\![ q]\!] &= 2[\![\mu^\pm (Du)\nu_\Sigma\cdot\nu_\Sigma]\!]
+\sigma\cA_\Sigma h&\quad&\mbox{ on } \Sigma,
\end{alignat*}
and
\begin{align*}
\Delta \eta=0,&\quad t>0,\ x\in\Omega\setminus\Sigma,\\
\eta|_{\Sigma}+\mathcal{A}_{\Sigma}h=0,&\quad t>0,\ x\in\Sigma,\\
\partial_\nu\eta=0,&\quad t>0,\ x\in\partial\Omega.\\
\end{align*}
In the sequel we will use the solution formula
$$\nabla q=\mathcal{T}_1 (\mu^\pm\Delta u)+\mathcal{T}_2(2[\![\mu^\pm (Du)\nu_\Sigma\cdot \nu_\Sigma]\!]).$$
Setting $z=(u,h)$ and $f=(f_u,g_h)$ we may rewrite \eqref{QB1} as
    \begin{equation}\label{QB2}
    \dot{z}(t)+Az(t)=f(t),\ t>0,\quad z(0)=z_0:=(u_0,h_0),
    \end{equation}
provided that $f_d=g_u=f_\eta=g_\eta=0$. The operator $A$ has the following properties.
\begin{proposition}\label{propertiesA}
Let $n=2,3$, $p\in (3,2(n+2)/n)$, $\mu^\pm >0, \sigma>0$ be constants
and let $X_0$ and $A$ be defined as above.
Then the following assertions hold.
\begin{enumerate}
\item The linear operator $-A$ generates an analytic
$C_0$-semigroup $e^{-At}$ in $X_0$ which has the property of maximal
$L_p$-regularity.
\item The spectrum of $A$ consists of countably many eigenvalues
with finite algebraic multiplicity and is independent of $p$.
\item $-A$ has no eigenvalues $\lambda$
with nonnegative real part other than $\lambda=0$.
\item $\lambda=0$ is a semi-simple eigenvalue with multiplicity $n+1$, i.e.\ $X_0=N(A)\oplus R(A)$.
\item The kernel $N(A)$ is isomorphic to the tangent space
$T_{z_*}\mathcal{E}$ of $\cE$ at the given equilibrium $z_*=(0, \Sigma)\in\mathcal{E}$.
\item The restriction of $e^{-At}$ to $R(A)$ is exponentially stable.
\end{enumerate}
\end{proposition}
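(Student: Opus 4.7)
The plan is to prove (1)--(6) in order, relying on the maximal $L^2$-regularity theorem for the two-phase Stokes system from the appendix (Theorem~\ref{thm:MaxReg}), the explicit spherical-harmonic decomposition of $\mathcal{A}_\Sigma$ on $S_R$, and a coupled energy-dissipation identity. For (1)--(2), I would rewrite the resolvent equation $(\lambda + A)(u,h) = f$ as three coupled subproblems: the two-phase Stokes resolvent for $(u,q)$ (with jump $[\![q]\!] = 2[\![\mu^\pm(Du)\nu_\Sigma\cdot\nu_\Sigma]\!] + \sigma\mathcal{A}_\Sigma h$), the elliptic transmission problem for $\eta$ (Dirichlet datum $\eta|_\Sigma = -\mathcal{A}_\Sigma h$, Neumann on $\partial\Omega$), and the scalar dynamic equation $\lambda h - u\cdot\nu_\Sigma - \Jump{\partial_{\nu_\Sigma}\eta} = g_h$ on $\Sigma$. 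Substituting the Stokes solution operator from Theorem~\ref{thm:MaxReg} and the explicit harmonic extension for $\eta$ into the third equation produces a scalar pseudodifferential equation for $h$ with principal part $\lambda h + \mathcal{B}\mathcal{A}_\Sigma h$, where $\mathcal{B}$ is the positive first-order Dirichlet-to-Neumann jump operator for Laplace on $\Sigma$; this is the structure analyzed in~\cite{PSZ09}, and the Stokes coupling contributes only lower-order perturbations. $\mathcal{R}$-bounded resolvent estimates then yield analyticity of $e^{-At}$ on $X_0$ and maximal $L_p$-regularity. Property (2) follows from compactness of the embedding $D(A)\hookrightarrow X_0$ (Rellich--Kondrachov on both components); $p$-independence of the spectrum reduces to elliptic regularity of the eigenfunctions.

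For the kernel in (5) and the dimension count in (4), assume $A(u,h)=0$. Pairing the $u$-equation with $\bar u$, integrating by parts in $\Omega^\pm$, and using $\dive u = 0$, $[\![u]\!]=0$, $u|_{\partial\Omega}=0$ together with the pressure jump converts the boundary term into $\sigma(\mathcal{A}_\Sigma h,\overline{u\cdot\nu_\Sigma})_\Sigma$ balanced by $\int_\Omega 2\mu^\pm|Du|^2\,dx$. Pairing $\Delta\eta=0$ with $\bar\eta$, using $\eta|_\Sigma = -\mathcal{A}_\Sigma h$, $\partial_\nu\eta|_{\partial\Omega}=0$, and the identity $u\cdot\nu_\Sigma + \Jump{\partial_{\nu_\Sigma}\eta}=0$, produces the same boundary integral with opposite sign balanced by $\int_\Omega|\nabla\eta|^2\,dx$. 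Adding the two identities cancels the boundary contributions and leaves
\[
\int_\Omega 2\mu^\pm|Du|^2\,dx + \int_\Omega|\nabla\eta|^2\,dx = 0.
\]
Korn's inequality with no-slip forces $u\equiv 0$, and $\nabla\eta\equiv 0$ gives $\eta\equiv c$ constant, whence $\mathcal{A}_\Sigma h\equiv -c$. Since $\mathcal{A}_\Sigma = \tfrac{n-1}{R^2}I + \Delta_{S_R}$ has eigenvalue $\tfrac{(n-1)-k(k+n-2)}{R^2}$ on degree-$k$ spherical harmonics, the space $\{h : \mathcal{A}_\Sigma h \equiv \mathrm{const}\}$ is spanned by the constants ($k=0$; radial variation of $R$) and the $n$ first-order harmonics ($k=1$; translations of the center $x_0$). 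This $(n+1)$-dimensional space matches $T_{z_*}\mathcal{E}$ exactly.

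For (3) and semi-simplicity of $\lambda = 0$ in (4), the same pairing applied to an eigenpair $(-A)z=\lambda z$ yields, after taking real parts,
\[
\re\lambda\bigl(\|u\|_{L_2}^2 + \sigma(\mathcal{A}_\Sigma h, h)_\Sigma\bigr) = -\int_\Omega 2\mu^\pm|Du|^2\,dx - \sigma\int_\Omega|\nabla\eta|^2\,dx \leq 0.
\]
Because $\mathcal{A}_\Sigma$ has a positive eigenvalue on constants, the quadratic form in parentheses is not sign-definite; the remedy is volume conservation. Integrating the $h$-equation over $\Sigma$ and using $\int_\Sigma u\cdot\nu_\Sigma\,dS = \int_{\Omega^+}\dive u\,dx = 0$ together with $\int_\Sigma\Jump{\partial_{\nu_\Sigma}\eta}\,dS = 0$ (from $\Delta\eta = 0$ in $\Omega^\pm$ and $\partial_\nu\eta|_{\partial\Omega}=0$) gives $\lambda\int_\Sigma h\,dS = 0$, so $\int_\Sigma h = 0$ whenever $\lambda\neq 0$. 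On this volume-preserving subspace $\mathcal{A}_\Sigma$ is negative semidefinite with kernel equal to the translation modes, and a mode-by-mode spherical-harmonic diagonalization of $-A$ (each degree-$k$ block with $k\geq 2$ reduces to a finite-dimensional matrix problem whose eigenvalues can be computed and shown to lie strictly in $\{\re z < 0\}$) completes the proof of (3). Semi-simplicity of $\lambda=0$ in (4) follows by applying the identity to a would-be generalized eigenvector $(-A)z_1 = z_0\in N(-A)\setminus\{0\}$: the resulting constraint is incompatible with $z_0\neq 0$. Property (6) is then automatic: by (1)--(4), $-A|_{R(A)}$ is sectorial with compact resolvent and discrete spectrum in $\{\re z < 0\}$ bounded away from the imaginary axis, so $e^{-At}|_{R(A)}$ decays exponentially by the spectral mapping theorem for analytic semigroups.

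The main technical obstacle is the spectral argument in (3): the straightforward $L_2$-pairing produces an identity with a sign-indefinite quadratic form because of the positive eigenvalue of $\mathcal{A}_\Sigma$ on constants, and converting it into a genuine dissipation inequality on a complement of $N(-A)$ requires both the volume-conservation identity—which itself hinges on the subtle interplay between incompressibility of $u$ and harmonicity of $\eta$, so as to freeze the unstable radial-inflation mode—and either an explicit mode-by-mode calculation on spherical harmonics or a refined pairing adapted to the $H^{-1}$-Mullins--Sekerka gradient-flow structure to handle the degree-$k$ modes with $k\geq 2$.
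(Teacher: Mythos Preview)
Your overall strategy matches the paper's: reduce (1) to a scalar equation for $h$ whose principal part is the Mullins--Sekerka operator $h\mapsto -[\![\partial_{\nu_\Sigma}S_2(\mathcal{A}_\Sigma h)]\!]$ (known to have maximal $L_p$-regularity by \cite{PSZ09}) plus a lower-order Stokes coupling; deduce (2) from compactness of $D(A)\hookrightarrow X_0$; and attack (3)--(5) via an energy identity together with the volume constraint $\int_\Sigma h\,do=0$ for $\lambda\neq 0$. The kernel computation and its identification with $T_{z_*}\mathcal{E}$ are exactly as in the paper.

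There is, however, a genuine gap in your argument for (3). The ``mode-by-mode spherical-harmonic diagonalization of $-A$'' that you propose for $k\geq 2$ cannot work: the operator $A$ involves the two-phase Stokes problem on $\Omega\setminus\Sigma$ with no-slip on $\partial\Omega$, and $\Omega$ is an \emph{arbitrary} bounded $C^4$-domain containing the sphere $\Sigma=S_R$. There is no spherical symmetry to exploit, so $-A$ does not decompose into finite-dimensional blocks indexed by degree. The good news is that this step is entirely unnecessary. Look again at the real part of the energy identity (the paper's version reads $\lambda\|u\|_2^2+2\|\mu^\pm Du\|_2^2+\sigma\|\nabla\eta\|_2^2-\sigma\bar\lambda(\mathcal{A}_\Sigma h|h)_{L_2(\Sigma)}=0$; your displayed sign on the $(\mathcal{A}_\Sigma h,h)$ term is off). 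The two dissipative terms $2\|\mu^\pm Du\|_2^2$ and $\sigma\|\nabla\eta\|_2^2$ carry no factor of $\lambda$, so once $\operatorname{Re}\lambda\geq 0$ and $\int_\Sigma h=0$ force all four nonnegative terms to vanish, you get $Du=0$ and $\nabla\eta=0$ directly---even on the imaginary axis. Korn plus $u|_{\partial\Omega}=0$ give $u\equiv 0$; $\eta\equiv\text{const}$ gives $[\![\partial_{\nu_\Sigma}\eta]\!]=0$; and then the $h$-equation $\lambda h=u\cdot\nu_\Sigma+[\![\partial_{\nu_\Sigma}\eta]\!]=0$ forces $h=0$ for $\lambda\neq 0$. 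No spectral computation beyond this is needed.

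Your semi-simplicity sketch is also too vague. The paper's argument for $N(A^2)=N(A)$ hinges on two specific observations you do not mention: for $Az=z_1=(0,h_1)\in N(A)$, the divergence theorem forces $\int_\Sigma h_1\,do=0$, hence $\mathcal{A}_\Sigma h_1=0$; and then the cross term $(\mathcal{A}_\Sigma h\,|\,h_1)_{L_2(\Sigma)}$ in the energy identity vanishes by self-adjointness of $\mathcal{A}_\Sigma$. This is what kills the generalized eigenvector, and it should be made explicit.
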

\begin{proof}
Consider \eqref{QB1} with $f_d=g_u=f_\eta=g_\eta=0$ and let $J=(0,T)$, $T>0$. Suppose that
$$h\in W_p^1(J;W_p^{1-1/p}(\Sigma))\cap L_p(J;W_p^{4-1/p}(\Sigma))$$
is known. Then solve problem $\eqref{QB1}_{1}$-$\eqref{QB1}_{5}$ with initial value $u_0\in H_{0}^1(\Omega)^n\cap L_{2,\sigma}(\Omega)$ by Theorem \ref{thm:MaxReg} with $g=a=0$ to obtain a unique solution
$$
u=S_{1,T}(h)\in H^1(0,T;L_{2,\sigma}(\Omega))\cap L_\infty(0,T;H^1_{0}(\Omega)^n)\cap L_2(0,T;H^2(\Omega\setminus\Sigma)^n),
$$
for each $T>0$. Plugging $u=S_{1,T}(h)$ into $\eqref{QB1}_6$ and denoting by $\eta=S_2(-\mathcal{A}_\Sigma h)=-S_2(\mathcal{A}_\Sigma h)$ the unique solution to $\eqref{QB1}_{7,8,9}$, we obtain the linear nonlocal problem
\begin{equation}\label{QB2a}
\partial_t h-m\Jump{\partial_{\nu_\Sigma}S_2(\mathcal{A}_\Sigma h)}=S_{1,T}(h)+g_h\ \text{on}\ \Sigma_{T},\quad h(0)=h_0\ \text{on}\ \Sigma.
\end{equation}
By \cite[Proof of Theorem 4.1]{PSZ09} the operator $[h\mapsto m\Jump{\partial_{\nu_\Sigma}S_2(\mathcal{A}_\Sigma h)}]$ has maximal $L_p$-regularity. Furthermore it holds that $S_{1,T}(h)\in L_q(0,T;W_p^{1-1/p}(\Sigma))$ for some $q>p$ which means that this term is of lower order in $L_p(0,T;X_0)$ compared to $m\Jump{\partial_{\nu_\Sigma}S_2(\mathcal{A}_\Sigma h)}]$. This can be seen as in the proof of Corollary \ref{cor:Volterra}.

Hence, by perturbation arguments we may conclude that \eqref{QB2a} has for each given $h_0\in W_p^{4-4/p}(\Sigma)$ a unique solution
$$h\in W_p^1(J;W_p^{1-1/p}(\Sigma))\cap L_p(J;W_p^{4-1/p}(\Sigma)).$$
In other words, we have shown that for each $T>0$ and for each given $f=(f_u,g_h)\in L_2(J;L_{2,\sigma}(\Omega))\times L_p(J;W_p^{1-1/p}(\Sigma))$ there exists a unique solution $z=(u,h)$ of \eqref{QB2} with
$$u\in H^1(0,T;L_{2,\sigma}(\Omega))\cap L_\infty(0,T;H^1_{0}(\Omega)^n)\cap L_2(0,T;H^2(\Omega\setminus \Sigma )^n),$$
and
$$h\in W_p^1(J;W_p^{1-1/p}(\Sigma))\cap L_p(J;W_p^{4-1/p}(\Sigma)),$$
provided that $u_0\in L_{2,\sigma}(\Omega)\cap H_0^1(\Omega)^n$ and $h_0\in W_p^{4-4/p}(\Sigma)$.

Mimicking the proof of \cite[Proposition 1.2]{Pr02} it follows that the operator $-A$ generates an analytic semigroup in $X_0=L_{2,\sigma}(\Omega)\times W_p^{1-1/p}(\Sigma)$.

By compact embedding, the resolvent of $A$ is compact and therefore the spectrum $\sigma(A)$ of $A$ consists of countably many eigenvalues with finite algebraic multiplicity and $\sigma(A)$ does not depend on $p$, by classical results. Let $\lambda\in \sigma(-A)$ with eigenfunctions $(u,h)$. Then the corresponding eigenvalue problem is
\begin{align}
\begin{split}\label{QB3}
\lambda u-\Delta u+\nabla q=0,&\quad x\in\Omega\setminus\Sigma,\\
{\rm div}\ u=0,&\quad x\in\Omega\setminus\Sigma,\\
-2[\![\mu^\pm(Du)]\!]\nu_\Sigma+[\![q]\!]\nu_\Sigma-\sigma(\mathcal{A}_\Sigma h)\nu_\Sigma=0,&\quad x\in\Sigma,\\
[\![u]\!]=0,&\quad x\in\Sigma,\\
\lambda h-u\cdot\nu_\Sigma-[\![\partial_{\nu_{\Sigma}}\eta]\!]=0,&\quad x\in\Sigma,\\
\Delta \eta=0,&\quad x\in\Omega\setminus\Sigma,\\
\eta|_{\Sigma}+\mathcal{A}_{\Sigma}h=0,&\quad x\in\Sigma,\\
\partial_\nu\eta=0,&\quad x\in\partial\Omega,\\
u=0,&\quad x\in\partial\Omega.
\end{split}
\end{align}
Taking the inner product of equation $\eqref{QB3}_1$ with $u$, integrating by parts and invoking the boundary as well as the transmission conditions, we obtain
    \begin{equation}\label{QB4}
    \lambda \|u\|_{2}^2+2\|\mu^\pm Du\|_2^2+\sigma\|\nabla\eta\|_2^2-\sigma\bar{\lambda}(\mathcal{A}_\Sigma h|h)_{L_2(\Sigma)}=0.
    \end{equation}
If $\lambda\neq 0$, then
    $$\lambda\int_\Sigma h do=\int_\Sigma \nu_{\Sigma}\cdot u do+\int_\Sigma [\![\partial_{\nu_{\Sigma}}\eta]\!] do=\int_{\Omega^+}{\rm div}\, u\, dx=0,$$
hence $h$ has mean value zero. It is well-known that the operator $\mathcal{A}_{\Sigma}=\frac{n-1}{R^2}+\Delta_\Sigma$ is negative definite on $L_{2,(0)}(\Omega)$. Taking real parts in \eqref{QB4} it follows that $\eta=const$ and $Du=0$, hence $u=0$ by Korn's inequality since $u|_{\partial\Omega}=0$. This in turn yields $h=0$ by $\eqref{QB3}_5$, showing that there are no eigenvalues $\lambda\neq 0$ of $-A$ with $\rm{Re}\ \lambda\ge 0$. Next we show that $\lambda=0$ is an eigenvalue of $A$. If $\lambda=0$, then \eqref{QB4} implies $\eta=\eta_\infty=const$ and $Du=0$. Hence, as before, $u=0$ by Korn's inequality. Since $q$ is constant by $\eqref{QB3}_1$ it follows from $\eqref{QB3}_{3,7}$ that
    $$\eta_\infty=\frac{[\![q]\!]}{\sigma}=\frac{n-1}{R^2}h+\Delta_\Sigma h,$$
which is a linear second order partial differential equation for $h$ on $\Sigma$. Note that a special solution to this linear equation is given by the constant function $h_\infty=\eta_\infty R^2/(n-1)$. The solution space $\mathcal{L}$ of the corresponding homogeneous equation $\mathcal{A}_\Sigma h=0$ is given by
    $$\mathcal{L}={\rm span}\{Y_1,\ldots,Y_n\},$$
where $Y_j$, $j\in\{1,\ldots,n\}$, are the spherical harmonics of degree one. Furthermore it holds that $\dim\mathcal{L}=n$. Since the constant $\eta_\infty=[\![q]\!]/\sigma$ is arbitrary, we see that $\dim N(A)=n+1$.

Let $z_1\in N(A)$ such that $A z=z_1$. The corresponding problem for $z=(u, h)^{\sf T}$ is given by
\begin{align}
\begin{split}\label{QB5}
-\Delta u+\nabla q=0,&\quad x\in\Omega\setminus\Sigma,\\
{\rm div}\ u=0,&\quad x\in\Omega\setminus\Sigma,\\
-2[\![\mu Du]\!]\nu_\Sigma+[\![q]\!]\nu_\Sigma-\sigma(\mathcal{A}_\Sigma h)\nu_\Sigma=0,&\quad x\in\Sigma,\\
[\![u]\!]=0,&\quad x\in\Sigma,\\
-u\cdot\nu_\Sigma-[\![\partial_{\nu_{\Sigma}}\eta]\!]=h_1,&\quad x\in\Sigma,\\
\Delta \eta=0,&\quad x\in\Omega\setminus\Sigma,\\
\eta|_{\Sigma}+\mathcal{A}_{\Sigma}h=0,&\quad x\in\Sigma,\\
\partial_\nu\eta=0,&\quad x\in\partial\Omega,\\
u=0,&\quad x\in\partial\Omega,
\end{split}
\end{align}
since $z_1=(0,h_1)$ and $h_1=\sum_{j=0}^n \alpha_j Y_j$, $Y_0:=1$. From the divergence condition we obtain
    $$0=\int_\Omega \Div u \sd x=\int_\Sigma\left(u\cdot\nu_\Sigma+[\![\partial_{\nu_{\Sigma}}\eta]\!]\right)do=-\int_\Sigma h_1 do,$$
hence $h_1$ has mean value zero and this in turn implies $\mathcal{A}_\Sigma h_1=0$. Multiplying $\eqref{QB5}_1$ by $u$, integrating by parts and taking into account the boundary and transmission conditions, we obtain
    \begin{equation}\label{QB6}
    2\|\mu^\pm Du\|_2^2+\sigma\|\nabla\eta\|_2^2+\sigma(\mathcal{A}_\Sigma h|h_1)_{L_2(\Sigma)}=0.
    \end{equation}
Since $\mathcal{A}_{\Sigma}$ is self-adjoint in $L_2(\Sigma)$ it follows that the last term in \eqref{QB6} vanishes and then, as before, $\eta=const$ and $u=0$, by Korn's inequality. In this case $\eqref{QB5}_5$ yields $h_1=0$, i.e.\ $z\in N(A)$, hence $N(A^2)=N(A)$. Since $A$ has compact resolvent, it follows that $R(A)$ is closed in $X_0$ and $\lambda=0$ is a pole of $(\lambda-A)^{-1}$. Therefore \cite[Remark A.2.4]{Lunardi} yields that $\lambda=0$ is semisimple, in particular it holds that $X_0=N(A)\oplus R(A)$. Moreover, the restricted semigroup $e^{-At}|_{R(A)}$ is exponentially stable, since we have a spectral gap.

Finally we show that the tangent space $T_{z_*}\mathcal{E}$ of $\mathcal{E}$ at $z_*=(0,\Sigma)\in\mathcal{E}$ coincides with $N(A)$. This can be seen as follows. Assume w.l.o.g.\ that $\Sigma$ is centered at the origin of $\mathbb{R}^n$ with radius $R$. Suppose $\mathcal{S}$ is a sphere that is sufficiently close to $\Sigma$. Denote by $(y_1,\ldots,y_n)$ the center of $\mathcal{S}$ and let $R+y_0$ be the corresponding radius of $\mathcal{S}$. Then by \cite[Section 6]{ES98} the sphere $\mathcal{S}$ can be parametrized over $\Sigma$ by the distance function
    $$d(y)=\sum_{j=1}^n y_jY_j-R+\sqrt{\left(\sum_{j=1}^n y_jY_j\right)^2+(R+y_0)^2-\sum_{j=1}^ny_j^2}.$$
Denoting by $\mathcal{O}$ a sufficiently small neighborhood of $0$ in $\mathbb{R}^{n+1}$, the mapping $d:\mathcal{O}\to W_p^{4-1/p}(\Sigma)$ is smooth and the derivative at $0$ is given by
    \begin{equation}\label{QB7}
    d'(0)w=\sum_{j=0}^n w_j Y_j,\quad \text{for all}\ w\in\mathbb{R}^{n+1}.
    \end{equation}
Therefore, near $\Sigma$, the set of equilibria $\mathcal{E}$ is a smooth manifold in $\tilde{X}_1$ of dimension $n+1$ and $T_{z_*}\mathcal{E}=N(A)$ by \eqref{QB7}.

Since $X_0=N(A)\oplus R(A)$ and $\sigma(A|_{R(A)})\subset \mathbb{C}_+$ it follows that the restricted semigroup $e^{-At}|_{R(A)}$ is exponentially stable. The proof is complete.

\end{proof}

We are now ready to prove the main result of this section. Note that the transformed equations near an equilibrium $(0,\Sigma)\in\cE$ read as follows.
	\begin{align}\label{nonlinstab1}
	\begin{split}
	\partial_t u-\mu^\pm\Delta u+\nabla \pi=F_u(u,\pi,h),&\quad t>0,\ x\in\Omega^\pm,\\
	{\rm div}\ u=F_d(u,h),&\quad t>0,\ x\in\Omega^\pm,\\
	-[\![\mu(\nabla u+\nabla u^{\sf T})]\!]\nu_\Sigma+[\![\pi]\!]\nu_\Sigma-\sigma(\mathcal{A}_\Sigma h)\nu_\Sigma=G_u(u,h),&\quad t>0,\ x\in\Sigma,\\
	[\![u]\!]=0,&\quad t>0,\ x\in\Sigma,\\
	\partial_t h-u\cdot\nu_\Sigma-[\![\partial_{\nu_{\Sigma}}\eta]\!]=G_h(u,\eta,h),&\quad t>0,\ x	\in\Sigma,\\
	\Delta \eta=F_\eta(\eta,h),&\quad t>0,\ x\in\Omega^\pm,\\
	\eta|_{\Sigma}+\mathcal{A}_{\Sigma}h=G_\eta(h),&\quad t>0,\ x\in\Sigma,\\
	\partial_\nu\eta=0,&\quad t>0,\ x\in\partial\Omega,\\
	u=0,&\quad t>0,\ x\in\partial\Omega,\\
	u(0)=u_0,&\quad x\in\Omega^\pm,\\
	h(0)=h_0,&\quad x\in\Sigma,
	\end{split}
	\end{align}
where the derivatives of the nonlinearities on the right hand side with respect to $(u,h)$ vanish at $(u,h)=(0,0)$ for constant $\pi$ and constant $\eta$.
\begin{theorem}\label{stability} The equilibrium $(0,\Sigma)\in\cE$ is stable in the sense
that for each $\varepsilon>0$
there exists some $\delta(\varepsilon)>0$ such that for all initial values
$(u_0,h_0)$ subject to
$$\|h_0\|_{W_p^{4-4/p}(\Sigma)}+\|u_0\|_{H^1_0(\Omega)^n}\leq \delta(\varepsilon)$$ there exists a unique global solution $(u(t),h(t))$
of \eqref{nonlinstab1} and it satisfies
$$\|h(t)\|_{W_p^{4-4/p}(\Sigma)}+\|u(t)\|_{H^1_0(\Omega)^n}\leq \varepsilon\ \text{for all}\ t\geq0.$$
Moreover, there exists some $h_\infty\in W_p^{4-1/p}(\Sigma)$ such that $\Theta_{h_\infty}\Sigma=\partial B_R(x)\subset\Omega$ for some $R>0$, $x\in\Omega$, and
$$\lim_{t\to\infty} \left(\|h(t)-h_\infty\|_{W_p^{4-4/p}(\Sigma)}+
\|u(t)\|_{H^1_0(\Omega)^n}\right)=0.$$
The convergence is at an exponential rate.
\end{theorem}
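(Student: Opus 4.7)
The plan is to apply the generalized principle of linearized stability for quasilinear parabolic problems with a finite-dimensional manifold of equilibria, in the spirit of \cite{PSZ09}. The three decisive inputs from \propref{propertiesA} are: $-A$ is sectorial of maximal $L_p$-regularity on $X_0$; $0$ is a semisimple eigenvalue with $N(A)=T_{z_*}\cE$ of dimension $n+1$; and $\sigma(A|_{R(A)})\subset\{\re\lambda\ge\omega_0\}$ for some $\omega_0>0$. Fix $\omega\in(0,\omega_0)$ and work on $J=[0,\infty)$ in exponentially weighted maximal regularity spaces, namely $e^{-\omega\cdot}$ times the natural $L_p$-space $W^1_p(J;W_p^{1-1/p}(\Sigma))\cap L_p(J;W_p^{4-1/p}(\Sigma))$ for the height, and the analogous $L_2$-based class matching $Y_T^1$ for the velocity; in such spaces exponential decay of the component-norms is built in by construction.

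Using \eqref{QB7}, introduce the smooth embedding $\zeta\mapsto z_*^\zeta=(0,d(\zeta))$ of a neighbourhood of $0$ in $\R^{n+1}$ onto $\cE$ near $z_*=(0,\Sigma)$. Decompose any candidate solution as $z(t)=z_*^{\zeta(t)}+w(t)$ with $w(t)\in R(A)$; by the splitting $X_0=N(A)\oplus R(A)$ this ansatz is unique for sufficiently small data. Inserting it into \eqref{nonlinstab1} and using $Az_*^\zeta=0$ together with the solution operators from \secref{sec:local} to eliminate $\pi$ and $\eta$ reduces the system to
$$
\partial_t w + Aw = \cN(w,\zeta) - \partial_\zeta z_*^\zeta\cdot\dot\zeta,
$$
where the nonlinearity $\cN$ is built from $F_u,F_d,G_u,G_h,F_\eta,G_\eta$ and satisfies $D\cN(0,0)=0$, hence has Lipschitz constant tending to zero on small balls.

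Next I would project with $P_c\colon X_0\to N(A)$ along $R(A)$ and $P_s=I-P_c$. The $N(A)$-projection gives the finite-dimensional slow equation $\dot\zeta=\Phi(w,\zeta)$ with $\Phi=(\partial_\zeta z_*^\zeta)^{-1}P_c\cN$ quadratic at the origin; one solves it backwards as $\zeta(t)=\zeta_\infty-\int_t^\infty\Phi(w,\zeta)\,ds$, with $\zeta_\infty$ implicitly determined by the initial data. The $R(A)$-projection is a linear parabolic problem for $w$ to which the maximal $L_p$-regularity of $A|_{R(A)}$ in the weighted space applies, yielding
$$
\|w\|_{\EE^\omega}\le C\bigl(\|P_s(z_0-z_*^{\zeta_\infty})\|_{X_\gamma}+\|P_s\cN(w,\zeta)\|_{\EE_0^\omega}\bigr).
$$
The lost divergence constraint ($F_d\ne 0$) is treated exactly as in \secref{sec:local}, either by a Bogovski-type corrector absorbing $F_d$ into a divergence-free replacement of $u$, or by admitting it as an inhomogeneous datum permissible in \thmref{thm:MaxReg}. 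A contraction argument in the product of the two weighted spaces then produces a unique small global solution.

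The outcome $w\in\EE^\omega$ and $\dot\zeta\in L_1(J;\R^{n+1})$ gives $\zeta(t)\to\zeta_\infty$ and $w(t)\to 0$ at exponential rate, hence convergence of $(u,h)$ to $(0,h_\infty)$ with $h_\infty=d(\zeta_\infty)$; the corresponding interface $\Theta_{h_\infty}\Sigma$ is then a sphere $\partial B_R(x)\subset\Omega$, and stability together with continuous dependence follow from the contraction. The main obstacle I expect is the mixed functional framework of the paper, that is $L_2$-based maximal regularity for the Navier--Stokes block versus $L_p$-based maximal regularity ($p>3$) for the height and Mullins--Sekerka blocks: one must verify that in the common exponentially weighted product space the nonlocal Volterra coupling $\cG_T$ from \corref{cor:Volterra} and the pressure/chemical-potential solution operators from \secref{sec:local} remain Lipschitz with small constant, so that the contraction principle applies uniformly on all of $J=[0,\infty)$.
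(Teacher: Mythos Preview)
Your overall strategy matches the paper's: both invoke the \cite{PSZ09} normal form via the spectral projections $P^c,P^s$ from \propref{propertiesA}, integrate the center variable backward from $t=\infty$ to identify $\zeta_\infty$ (equivalently $\sx_\infty$), and close a contraction in exponentially weighted maximal-regularity spaces on $\mathbb{R}_+$.

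The real difference, and the place where your sketch is too thin, is the treatment of the divergence constraint. Your ansatz $z(t)=z_*^{\zeta(t)}+w(t)$ with $w(t)\in R(A)$ cannot hold as written, because $R(A)\subset X_0=L_{2,\sigma}(\Omega)\times W_p^{1-1/p}(\Sigma)$ forces the first component of $w$ to be divergence-free, whereas the transformed velocity satisfies $\Div u=F_d(u,h)\neq 0$. You acknowledge this but say it is handled ``exactly as in \secref{sec:local}'' or by a Bogovski corrector; neither is what the paper does, and \secref{sec:local} in fact bypasses the issue entirely by reducing to a scalar equation for $h$. The paper instead introduces an explicit $C^1$ parametrization $\phi$ (with $\phi(0)=\phi'(0)=0$) of the nonlinear phase manifold $\mathcal{PM}=\{\Div u=F_d(u,h)\}$ over its tangent $\mathcal{PM}_0=\{\Div u=0\}$, obtained by solving an auxiliary stationary Stokes problem. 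The solution is then split as $(u,h)=(0,h_\infty)+(\tilde u,\tilde h)+(\bar u,\bar h)$ into a divergence-free tilde part, to which $A$ and the normal form genuinely apply, and a bar part that carries \emph{all} the nonlinearities (including $F_d$) and is governed by a full $\omega$-shifted linear two-phase Stokes/Mullins--Sekerka system; the two systems are coupled only through the lower-order terms $\omega\bar u,\omega\bar h$. The bar system is solved globally in the weighted space by the implicit function theorem, using \thmref{thm:MaxReg} for invertibility of $\mathbb{L}_\omega$. This two-layer decomposition is the missing structural ingredient in your outline; once you insert it, the rest of your argument aligns with the paper.
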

\begin{proof}
The nonlinear phase manifold for the semiflow is given by
	\begin{equation*}
          \mathcal{P}\mathcal{M}=\{(u,h)\in H^1_0(\Omega)^n\times W_p^{4-4/p}(\Sigma): \diver u=F_d(u,h)\}.
	\end{equation*}
In a first step we want to parametrize $\mathcal{P}\mathcal{M}$ over its tangent space at $(0,0)$, that is
	\begin{equation*}
          \mathcal{P}\mathcal{M}_0:=\{(u,h)\in H^1_0(\Omega)^n\times W_p^{4-4/p}(\Sigma): \diver u=0\}.
	\end{equation*}
To this end we consider the generalized Stokes equation
	\begin{align}\label{nonlinstab2}
	\begin{split}
	-\Delta u+\nabla\pi =0&\quad \text{in}\ \Omega,\\
	\diver u=f&\quad \text{in}\ \Omega,\\
	u=0&\quad \text{on}\ \partial\Omega
	\end{split}
	\end{align}
for which we have the following existence and uniqueness result.
\begin{proposition}
For every $f\in L^2_{(0)}(\Omega)=\{u\in L^2(\Omega): \int_\Omega u\sd x=0\}$ the Stokes problem \eqref{nonlinstab2} admits a unique solution $(u,\pi)\in H^1_0(\Omega)^n\times L^2_{(0)}(\Omega)$, which depends continuously on $f\in L^2_{(0)}(\Omega)$.
\end{proposition}
\begin{proof}
  The proposition is a special case of \cite[Theorem III.1.4.1]{BuchSohr}.
\end{proof}
With the help of this result we may continue as follows. For a given $(\tilde{u},\tilde{h})\in \mathcal{P}\mathcal{M}_0$ with a sufficiently small norm, we solve the auxiliary problem
\begin{align}\label{nonlinstab3}
\begin{split}
-\Delta \bar{u}+\nabla\bar{\pi} =0&\quad \text{in}\ \Omega,\\
\diver {\bar{u}}=P_0F_d(\bar{u}+\tilde{u},\tilde{h})&\quad \text{in}\ \Omega,\\
\bar{u}=0&\quad \text{on}\ \partial\Omega,
\end{split}
\end{align}
where $P_0:L^2(\Omega)\to L_{(0)}^2(\Omega)$ is defined by $P_0f=f-\frac{1}{|\Omega|}\int_\Omega f dx$.
Since the Fr\'{e}chet derivatives of the nonlinearities vanish in $(0,0)$, the implicit function theorem yields the existence of a ball $B(0,r)\subset H_0^1(\Omega)^n\cap W_p^{4-4/p}(\Sigma)$ and a unique solution
$$(\bar{u},\bar{\pi})=\tilde{\phi}(\tilde{u},\tilde{h})\in H^1_{0}(\Omega)\times L^2_{(0)}(\Omega)$$
with a function $\tilde{\phi}\in C^1(B(0,r))$ such that $\tilde{\phi}'(0)=0$.
Define $\Theta_{\tilde{h}}(x)$ as in \eqref{eq:Hansawa} with $h$ replaced by $\tilde{h}$, which does not depend on $t$. Let $v(x):=(\bar{u}+\tilde{u})(\Theta_{\tilde{h}}^{-1}(x))$. Then $v\in H_0^1(\Omega)^n$ and
\begin{align*}
\Div v(x)&=\Tr[D\Theta_{\tilde{h}}^{-T}(x)\nabla (\tilde{u}+\bar{u})(\Theta_{\tilde{h}}^{-1}(x))]\\
&=\Tr[(D\Theta_{\tilde{h}}^{-T}(x)-I)\nabla (\tilde{u}+\bar{u})(\Theta_{\tilde{h}}^{-1}(x))]+\Div (\bar{u}+\tilde{u})(\Theta_{\tilde{h}}^{-1}(x))\\
&=-F_d(\bar{u}+\tilde{u},\tilde{h})(\Theta_{\tilde{h}}^{-1}(x))+
P_0F_d(\bar{u}+\tilde{u},\tilde{h})(\Theta_{\tilde{h}}^{-1}(x))\\
&=-\frac{1}{|\Omega|}\int_\Omega F_d(\bar{u}+\tilde{u},\tilde{h})(x)\ dx,
\end{align*}
since $\Div\tilde{u}=0$. Because of $0=\int_\Omega\Div v(x) dx$, it follows that $P_0 F_d(\bar{u}+\tilde{u},\tilde{h})=F_d(\bar{u}+\tilde{u},\tilde{h})$.

Let
$$P\colon H^1_{0}(\Omega)^n\times L^2_{(0)}(\Omega)\to H^1_{0}(\Omega)^n,\ P(u,\pi)=u,$$
and set $\phi(\tilde{u},\tilde{h})=P\tilde{\phi}(\tilde{u},\tilde{h})$.
It is not difficult to see that $$(u,h):=(\tilde{u},\tilde{h})+(\phi(\tilde{u},\tilde{h}),0)\in \mathcal{P}\mathcal{M}.$$ Note that this mapping is injective. For the final construction of the parametrization we have to show that this mapping is also surjective. For that purpose we solve the linear problem
\begin{align}\label{nonlinstab4}
\begin{split}
-\Delta \bar{u}+\nabla\bar{\pi} =0&\quad \text{in}\ \Omega,\\
\diver {\bar{u}}=P_0F_d(u,h)&\quad \text{in}\ \Omega,\\
\bar{u}=0&\quad \text{on}\ \partial\Omega,
\end{split}
\end{align}
for given functions $(u,h)\in\mathcal{P}\mathcal{M}$. Setting $(\tilde{u},\tilde{h})=(u-\bar{u},h)$ we obtain that $\tilde{u}\in H_0^1(\Omega)^n$ and
$$\Div \tilde{u}=F_d(u,h)-P_0F_d(u,h)=\frac{1}{|\Omega|}\int_\Omega F_d(u,h) dx.$$
Since $0=\int_\Omega\Div \tilde{u}dx$ this yields $P_0 F_d(u,h)=F_d(u,h)$.

Furthermore it holds that $(\tilde{u},\tilde{h})\in\mathcal{P}\mathcal{M}_0$ and $\bar{u}=\phi(\tilde{u},\tilde{h})$ by injectivity. This in turn proves surjectivity. Observe that also $\phi(0)=0$. This can be seen as follows. Suppose that $\tilde{u}=\tilde{h}=0$. Then obviously $\bar{u}=0$ and $\bar{\pi}=const.$ is a solution of \eqref{nonlinstab3}. By the uniqueness it follows that $\phi(0)=0$. Furthermore, if $(u_\infty,h_\infty,\pi_\infty,\eta_\infty)$ is an equilibrium of \eqref{nonlinstab1}, then $u_\infty=0$ and
$$\eta_\infty=[\![\pi_\infty]\!]/\sigma=\mathcal{H}(h_\infty)=const.$$
Since $F_d(0,h_\infty)=0$, the unique solvability of \eqref{nonlinstab3} implies that $\phi(0,h_\infty)=0$. This is reasonable since the equilibria are contained in the linear phase manifold $\mathcal{P}\mathcal{M}_0$.

Let $(u_0,h_0)=(\tilde{u}_0,\tilde{h}_0)+(\phi(\tilde{u}_0,\tilde{h}_0),0)\in \mathcal{P}\mathcal{M}$ and let $(u,h,\pi,\eta)$ be the solution of \eqref{nonlinstab1} to this initial value on some interval $[0,a]$. With the help of the map $\phi$ we want to derive a decomposition for $(u,h)$. To be precise we want to write
	$$(u,h)=(u_\infty,h_\infty)+(\tilde{u},\tilde{h})+(\bar{u},\bar{h}),$$
where $(\tilde{u},\tilde{h})(t)\in \mathcal{P}\mathcal{M}_0$ for all $t\in [0,a]$ and $(u_\infty,h_\infty,\pi_\infty,\eta_\infty)$ is an equilibrium of \eqref{nonlinstab1}. Consider the two coupled systems
\begin{align}
\begin{split}\label{decomp1}
\omega \bar{u} +\partial_t \bar{u}-\mu^\pm\Delta \bar{u}+\nabla \bar{\pi}&=F_u(u,\pi,h)\\
{\rm div}\,\bar{u}&=F_d(u,h)\\
-P_\Sigma[\![\mu(\nabla\bar{u}+\nabla\bar{u}^{\sf T})]\!]\nu_\Sigma &= G_\tau(u,h)\\
-([\![\mu^\pm(\nabla\bar{u}+\nabla\bar{u}^{\sf T})]\!]\nu_\Sigma|\nu_\Sigma)+[\![\bar{\pi}]\!]-\sigma\cA_\Sigma\bar{h}&=G_\nu(u,h)+G_\gamma(h)-G_\gamma(h_\infty)\\
[\![\bar{u}]\!] &=0\\
\bar{u}|_{\partial\Omega}&=0\\
\omega \bar{h}+\partial_t\bar{h}-\bar{u}\cdot \nu_\Sigma-[\![\partial_{\nu_{\Sigma}}\bar{\eta}]\!]&=G_h(u,\eta,h)\\
\Delta \bar{\eta}&=F_\eta(\eta,h)\\
\bar{\eta}|_{\Sigma}+\mathcal{A}_{\Sigma}\bar{h}&=G_\eta(h)-G_\eta(h_\infty)\\
\partial_\nu\bar{\eta}|_{\partial\Omega}&=0\\
\bar{u}(0)=\phi(\tilde{u}_0,\tilde{h}_0),\quad \bar{h}(0)&=0,
\end{split}
\end{align}
and
\begin{alignat}{2}\label{decomp2}
\partial_t \tilde{u}-\mu^\pm\Delta \tilde{u}+\nabla \tilde{\pi}&=\omega \bar{u}&\quad& t>0,\ x\in\Omega\backslash\Sigma\nonumber\\
{\rm div}\,\tilde{u}&= 0&\quad& t>0,\ x\in\Omega\backslash\Sigma\nonumber\\
-P_\Sigma[\![\mu^\pm(\nabla\tilde{u}+\nabla\tilde{u}^{\sf T})]\!]\nu_\Sigma &=0&\quad& t>0,\ x\in\Sigma\nonumber\\
-([\![\mu(\nabla\tilde{u}+\nabla\tilde{u}^{\sf T})]\!]\nu_\Sigma)\cdot\nu_\Sigma+[\![\tilde{\pi}]\!]-\sigma\cA_\Sigma\tilde{h}&=0&\quad& t>0,\ x\in\Sigma\nonumber\\
[\![\tilde{u}]\!] &=0 &\quad& t>0,\ x\in\Sigma\\
\tilde{u}&=0&\quad& t>0,\ x\in\partial\Omega\nonumber\\
\partial_t\tilde{h}-\tilde{u}\cdot \nu_\Sigma-[\![\partial_{\nu_{\Sigma}}\tilde{\eta}]\!]&
=\omega\bar{h}&\quad& t>0,\ x\in\Sigma\nonumber\\
\Delta \tilde{\eta}&=0&\quad& t>0,\ x\in\Sigma\nonumber\\
\tilde{\eta}|_{\Sigma}+\mathcal{A}_{\Sigma}\tilde{h}&=0&\quad& t>0,\ x\in\Sigma\nonumber\\
\partial_\nu\tilde{\eta}&=0&\quad& t>0,\ x\in\partial\Omega\nonumber
\end{alignat}
with initial values $\tilde{u}(0)=\tilde{u}_0$ and $\tilde{h}(0)=\tilde{h}_0-h_\infty$. Here $\pi=\pi_\infty+\tilde{\pi}+\bar{\pi}$ and $\eta=\eta_\infty+\tilde{\eta}+\bar{\eta}$. We recall that $u_\infty=0$ and $\pi_\infty,\eta_\infty$ are constants and it holds that
$$\eta_\infty=[\![\pi_\infty]\!]/\sigma=\mathcal{H}(h_\infty).$$

With the help of the operator $A$ introduced above, we may rewrite problem \eqref{decomp2} as
	\begin{equation}\label{decomp3}
	\dot{\tilde{z}}(t)+A\tilde{z}(t)=R(\bar{z})(t)\quad \text{for}\ t\in (0,T),\ \tilde{z}(0)=\tilde{z}_0-z_\infty,
	\end{equation}
where $\tilde{z}_0:=(\tilde{u}_0,\tilde{h}_0)$, $z_\infty:=(0,h_\infty)$, $\tilde{z}=(\tilde{u},\tilde{h})$ and $\bar{z}=(\bar{u},\bar{h})$. Here the mapping $R$ is given by
	$$R(\bar{z})=(\omega(I-\mathcal{T}_1)\bar{u},\omega\bar{h}).$$
Thanks to Proposition \ref{propertiesA} we have the decomposition $X_0=N(A)\oplus R(A)$. Let $P^c$ denote the spectral projection corresponding to $\sigma_c(A)=\{0\}$ and set $P^s=I-P^c$. Then $R(P^c)=N(A)$ and $R(P^s)=R(A)$. Following the lines of \cite[Remark 2.2 (b)]{PSZ09} we may parametrize the set of equilibria near 0 over $N(A)$ via a $C^2$ map $[\sx\mapsto\sx+\psi(\sx)]$ such that $\psi(0)=\psi'(0)=0$ and $R(\psi)\subset D(A_s)$, where $A_s=AP^s$. This is true, since the nonlinearities on the right side in \eqref{nonlinstab1} are bilinear and smooth.

For $z_\infty$ sufficiently close to 0 there exists $\sx_\infty$ such that $z_\infty:=\sx_\infty+\psi(\sx_\infty)$. Introducing the new variables $\sx :=P^c\tilde{z}$ and
$$\sy:=P^s\tilde{z}-\psi(\sx_\infty+P^c\tilde{z})+\psi(\sx_\infty)$$
we obtain from \eqref{decomp3} the so-called normal form
\begin{alignat}{2}\label{normeq}
\dot{\sx}&=T(\bar{z}),\quad &\sx(0)=&\sx_0-\sx_\infty,\nonumber\\
\dot{\sy}+A_s\sy&=S(\sx_\infty,\sx,\bar{z}),\quad &\sy(0)=&\sy_0,
\end{alignat}
where $T(\bar{z})=P^cR(\bar{z})$,
$$S(\sx_\infty,\sx,\bar{z})=P^sR(\bar{z})-A_s(\psi(\sx_\infty+\sx)-\psi(\sx_\infty))-\psi'(\sx_\infty+\sx)T(\bar{z}),$$
and $\sx_0:=P^c\tilde{z}_0$, $\sy_0:=P^s\tilde{z}_0-\psi(\sx_0)$ with $\tilde{z}_0=(\tilde{u}_0,\tilde{h}_0)$. Observe that $S(0)=S'(0)=0$ by the properties of the function $\psi$ and since $T(0)=0$.

Let
$$\mathbb{E}_1(\mathbb{R}_+):=H^1(\mathbb{R}_+;L_{2,\sigma}(\Omega))\cap L_2(\mathbb{R}_+;H^2(\Omega\backslash\Sigma)),$$
$$\mathbb{E}_2(\mathbb{R}_+):=W_p^1(\mathbb{R}_+;W_p^{1-1/p}(\Sigma))\cap L_p(\mathbb{R}_+;W_p^{4-1/p}(\Sigma)).$$
and let
$$\mathbb{E}_1(\mathbb{R}_+,\delta):=\{v\in L_2(\mathbb{R}_+;L_2(\Omega)):e^{\delta t} v\in \mathbb{E}_1(\mathbb{R}_+)\},$$
$$\mathbb{E}_2(\mathbb{R}_+,\delta):=\{v\in L_p(\mathbb{R}_+;L_p(\Omega)):e^{\delta t} v\in \mathbb{E}_2(\mathbb{R}_+)\},$$
where $\delta\in (0,\delta_0)$ and $\delta_0>0$ depends on the spectral bound on the operator $A_s$ (see Proposition \ref{propertiesA}). Clearly, $T:\mathbb{E}(\mathbb{R}_+,\delta)\to \mathbb{E}^c(\mathbb{R}_+,\delta),$ where
$$\mathbb{E}(\mathbb{R}_+,\delta):=\mathbb{E}_1(\mathbb{R}_+,\delta)\times \mathbb{E}_2(\mathbb{R}_+,\delta)$$
and $\mathbb{E}^c(\mathbb{R}_+,\delta):=P^c\mathbb{E}(\mathbb{R}_+,\delta)$.
For given $(\sx_0,\sy_0,\bar{z})$ we want to solve \eqref{normeq} for $(\sx,\sy,\sx_\infty)$. First, for given $(\sx_0,\bar{z})\in X_0^c\times \mathbb{E}(\mathbb{R}_+,\delta)$ with $X_0^c:=P^cX_0$ we define
$$\sx_\infty:=\sx_0+\int_0^\infty T(\bar{z})(s)ds=:K_1(\sx_0,\bar{z})\in X_0^c.$$
Then
$$\sx(t):=-\int_t^\infty T(\bar{z})(s)ds=:K_2(\bar{z})$$
solves the first differential equation in \eqref{normeq} and
$$\sx(0)=-\int_0^\infty T(\bar{z})(s)ds=\sx_0-\sx_\infty.$$
Observe that by Young's inequality we have
$$\sx\in P^c[H^1(\mathbb{R}_+,\delta;L_{2,\sigma}(\Omega))\times W_p^1(\mathbb{R}_+,\delta;W_p^{1-1/p}(\Sigma))].$$
These exponentially weighted function spaces are defined in exactly the same way as $\mathbb{E}_j(\mathbb{R}_+,\delta)$.

Substituting the expressions for $\sx_\infty$ and $\sx$ into the function $S$, we obtain
$$\dot{\sy}+A_s\sy=S_1(\sx_0,\bar{z}),\quad \sy(0)=\sy_0,$$
where
$$S_1(\sx_0,\bar{z}):=S(K_1(\sx_0,\bar{z}),K_2(\bar{z}),\bar{z}),$$
and $\sy_0\in X_0^s\cap\mathcal{P}\mathcal{M}_0$. If one takes into account that the first component of $\psi$ is identically zero, it follows easily from the definition of $S$ and the smoothness of $\psi$ that
$$S_1(\sx_0,\bar{z}):X_0^c\times\mathbb{E}(\mathbb{R}_+,\delta)\to \mathbb{X}^s(\mathbb{R}_+,\delta)$$
where
$$\mathbb{X}^s(\mathbb{R}_+,\delta):=P^s[L_2(\mathbb{R}_+,\delta;L_{2,\sigma}(\Omega))\times L_p(\mathbb{R}_+,\delta;W_p^{1-1/p}(\Sigma))].$$
Here $X_0^s:=P^sX_0$. Since $\sigma(A_s)\subset\mathbb{C}_+$, we obtain for $\delta>0$ sufficiently small
$$\sy=\left(\frac{d}{dt}+A_s,{\rm tr}\right)^{-1}\left(S_1(\sx_0,\bar{z}),\sy_0\right)\in \mathbb{E}^s(\mathbb{R}_+,\delta),$$
where ${\rm tr}\ v:=v(0)$ and
$$\mathbb{E}^s(\mathbb{R}_+,\delta):=P^s\mathbb{E}(\mathbb{R}_+,\delta).$$
Here $\delta>0$ depends on the growth bound of the semigroup.
Putting things together, we see that
$$\tilde{z}=\tilde{\mathcal{G}}(\sx_0,\sy_0,\bar{z}):=\sx+\psi(\sx+\sx_\infty)-\psi(\sx_\infty)+\sy$$
and
$$z_\infty=\mathcal{G}_\infty(\sx_0,\sy_0,\bar{z}):=\sx_\infty+\psi(\sx_\infty).$$
We turn our attention to \eqref{decomp1}. Let $\mathbb{L}_\omega$ be the linear operator defined by the left side of \eqref{decomp1}. Then we can rewrite \eqref{decomp1} in the shorter form
\begin{equation}\label{eq:wbar}
\mathbb{L}_\omega\bar{w}=N(w_\infty+\tilde{w}+\bar{w})-N(w_\infty),
\end{equation}
with initial value $\bar{w}(0)=\bar{w}_0:=(\phi(\tilde{u}_0,\tilde{h}_0),0)$.

Here we have set $w_\infty=(u_\infty,h_\infty,0,0)$. Due to the first part of the proof, the nonlinearities on the right hand side of \eqref{decomp1} depend only on $(\sx_0,\sy_0,\bar{w})$, where $\bar{w}=(\bar{u},\bar{h},\bar{\pi},\bar{\eta})$ since $w_\infty=(\mathcal{G}_\infty(\sx_0,\sy_0,\bar{u},\bar{h}),0,0)$ and since there exists a function $\tilde{\mathcal{H}}$ such that
$$\tilde{w}=(\tilde{u},\tilde{h},\tilde{\pi},\tilde{\eta})=
\tilde{\mathcal{H}}(\sx_0,\sy_0,\bar{u},\bar{h}).$$
This follows from the considerations above, as $(\tilde{\pi},\tilde{\eta})$ can be written in terms of $(\bar{u},\bar{h})$ and $(\tilde{u},\tilde{h})=\tilde{z}=\tilde{\mathcal{G}}(\sx_0,\sy_0,\bar{z})$. Moreover, the right hand sides in \eqref{decomp1} do not depend on $(\pi_\infty,\eta_\infty)$, since these quantities are constant.

In order to solve \eqref{decomp1} we define
$$M(\sx_0,\sy_0,\bar{w}):=N(w_\infty+\tilde{w}+{\rm ext}_\delta[(\phi(\tilde{u}_0,\tilde{h}_0),0)-(\bar{u}(0),\bar{h}(0))]+\bar{w})-N(w_\infty),$$
where
$${\rm ext}_\delta:\{H_0^1(\Omega)^n\cap L_{2,\sigma}(\Omega)\}\times W_p^{4-4/p}(\Sigma)\to \mathbb{E}(\mathbb{R}_+,\delta)\times\{0\}\times\{0\},$$
such that $({\rm ext}_\delta z)(0)=(z_1,z_2,0,0)$ with $z=(z_1,z_2)$. We want to solve the equation
\begin{equation}\label{impl1}
\mathbb{L}_\omega\bar{w}=M(\sx_0,\sy_0,\bar{w}),\quad (\bar{w}_1,\bar{w}_2)(0)=(\phi(\tilde{u}_0,\tilde{h}_0),0),
\end{equation}
by the implicit function theorem. Let
$$\bar{\mathbb{E}}(\mathbb{R}_+,\delta):=\mathbb{E}(\mathbb{R}_+,\delta)\times L_2(\mathbb{R}_+,\delta;H_{(0)}^1(\Omega\backslash\Sigma))\times L_p(\mathbb{R}_+,\delta;W_p^2(\Omega\backslash\Sigma))$$
and define
$$K(\sx_0,\sy_0,\bar{w}):=\bar{w}-(\mathbb{L}_\omega,\operatorname{tr})^{-1}(M(\sx_0,\sy_0,\bar{w}),
(\phi(\tilde{u}_0,\tilde{h}_0),0)).$$
The mapping $K:\mathbb{B}(r,\delta)\to\bar{\mathbb{E}}(\mathbb{R}_+,\delta)$ is well defined, provided that $\omega>0$ is sufficiently large since $(M(\sx_0,\sy_0,\bar{w}),(\phi(\tilde{u}_0,\tilde{h}_0),0))$ satisfies all relevant compatibility conditions at $t=0$. To be precise, we have $\Jump{\phi(\tilde{u}_0,\tilde{h}_0)}=0$, $\phi(\tilde{u}_0,\tilde{h}_0)|_{\partial\Omega}=0$ as well as
$$\Div \phi(\tilde{u}_0,\tilde{h}_0)=\Div (\tilde{u}_0+\phi(\tilde{u}_0,\tilde{h}_0))=F_d(\tilde{u}_0+\phi(\tilde{u}_0,\tilde{h}_0),\tilde{h}_0),$$
since $\Div\tilde{u}_0=0$. Here we have set
\begin{multline*}
\mathbb{B}(r,\delta):=\\
\{(\sx_0,\sy_0,\bar{w})\in X_0^c\times(X_0^s\cap\mathcal{P}\mathcal{M}_0)\times\bar{\mathbb{E}}(\mathbb{R}_+,\delta):
\|(\sx_0,\sy_0,\bar{w})\|_{[\mathcal{P}\mathcal{M}_0]^2\times\bar{\mathbb{E}}(\mathbb{R}_+,\delta)}\le r\},
\end{multline*}
where $r>0$ is sufficiently small.

Note that $M(0,0,0)=M'(0,0,0)=0$ since $\phi(0,0)=\phi'(0,0)=0$. Therefore the implicit function theorem yields a ball
$$B(0,\rho)\subset X_0^c\times (X_0^s\cap \mathcal{P}\mathcal{M}_0)$$
and a unique solution $\bar{w}=\Phi(\sx_0,\sy_0)$ of \eqref{impl1}, where $\Phi\in C^1(B(0,\rho))$. Note that by construction, $\bar{w}$ is a solution of \eqref{decomp1}.

Finally this shows that $(\tilde{u}(t),\tilde{h}(t))$ as well as $(\bar{u}(t),\bar{h}(t))$ converge in $H_0^1(\Omega)^n\times W_p^{4-4/p}(\Sigma)$ to zero as $t$ tends to infinity at an exponential rate.

Therefore $(u(t),h(t))\to (u_\infty,h_\infty)$ in $H_0^1(\Omega)^n\times W_p^{4-4/p}(\Sigma)$ as $t\to\infty$, where the equilibrium $(u_\infty,h_\infty)$ is determined by $(u_0,h_0)$.
\end{proof}

\appendix
\section{Maximal Regularity for the linear Stokes System}

For the following let $\Omega_0^+$, $\Omega$ be bounded domains with $C^3$-boundary such that $\Gamma_0:=\partial\Omega_0^+\subset \Omega$ and let $\Omega_0^-=\Omega\setminus \overline{\Omega_0^+}$. Recall that $H^1_{(0)}(\Omega)=H^1(\Omega)\cap L_{2,(0)}(\Omega)$ and $H^{-1}_{(0)}(\Omega)= H^1_{(0)}(\Omega)$.
 
In this appendix we consider the unique solvability of the system
\begin{alignat}{2}\label{eq:InstatStokes1}
  \partial_t u -\mu^\pm \Delta u+\nabla q &= f &\quad& \text{in} \ \Omega^\pm_0\times (0,T),\\ 
  \Div u &= g &\quad& \text{in} \ \Omega^\pm_0\times (0,T),\\
  \Jump{u} &= 0 &\quad& \text{on} \ \Gamma_0\times (0,T),\\
\Jump{\nu_{\Gamma_0} \cdot T(u,q)} &= a&\quad& \text{on} \ \Gamma_0\times (0,T)=:\Gamma_{0,T},\\
u|_{\partial\Omega_0} &= 0 &&\text{on}\ \partial\Omega_0\times (0,T),
\\\label{eq:InstatStokes5}
u|_{t=0} &= v_0  &\quad& \text{on} \ \Omega_0,
\end{alignat}
where $T(u,q)= \mu^\pm Du -qI$ in $\Omega_0^\pm$.


\begin{theorem}\label{thm:MaxReg}
  Let $0<T\leq T_0<\infty$, $n\geq 2$, and $\Omega\subset \Rn$ be a bounded domain with $C^3$-boundary. For every $v_0\in H^1_{0}(\Omega)^n, f\in L_2(Q_T)^n, g\in L_2(0,T;H^1(\Omega\setminus \Gamma_0))$ with $g\in H^1(0,T;H^{-1}_{(0)}(\Omega))$, 
$$
a\in H^{\frac14}(0,T;L_2(\Gamma_0))\cap L_2(0,T;H^{\frac12}(\Gamma_0))=: H^{\frac14,\frac12}(\Gamma_{0,T})
$$ 
such that
  \begin{equation}
    \label{eq:CompCond1}
    \Div v_0= g|_{t=0},\quad \int_{\Omega} g(t,x)\sd x =0\quad \text{for almost all}\ t\in (0,T) 
  \end{equation}
 the system \eqref{eq:InstatStokes1}-\eqref{eq:InstatStokes5} has a unique solution 
 \begin{equation*}
   u\in H^1(0,T;L_{2}(\Omega)^n)\cap L_\infty(0,T;H^1_{0}(\Omega)^n)\cap L_2(0,T;H^2(\Omega\setminus \Gamma_0)^n)
 \end{equation*}
 Moreover, there is some constant $C$ independent of $T\in (0,T_0]$, $u,f,g,a,v_0$ such that
  \begin{eqnarray*}
    \lefteqn{\|(\partial_t u,\nabla u)\|_{L_2(J;L_2(\Omega))}+ \sum_{\pm}\|(\nabla^2 u,\nabla q)\|_{L_2(J;L_2(\Omega_0^\pm))}}\\
 &\leq& C_q\left(\|(f,\nabla g)\|_{L_2(Q_T)}+\|\partial_t g\|_{L_2(J;H^{-1}_0(\Omega))}+\|a\|_{H^{\frac14,\frac12}(\Gamma_{0,T})}+ \|v_0\|_{H^1(\Omega)}\right)
  \end{eqnarray*}
where $J=[0,T]$. 
\end{theorem}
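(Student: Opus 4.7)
My plan is to reduce the problem to one with homogeneous data $g=0$, $a=0$, $v_0=0$ by explicit lifting, then apply an analytic-semigroup approach in $L^2_\sigma(\Omega)$ together with the de Simon theorem that maximal $L^2$-regularity is automatic for analytic semigroups on Hilbert spaces. Throughout I would first extend all data to a fixed larger time interval $(0, T_0)$, so that constants become independent of $T \in (0, T_0]$.

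For the reduction, I construct a particular solution $(u_*, q_*)$ in the claimed regularity class that absorbs $g$, $a$, and $v_0$. To handle $g$, solve the transmission Neumann problem $\Delta \phi^\pm = g$ in $\Omega_0^\pm$ with $\Jump{\phi} = \Jump{\partial_\nu \phi} = 0$ on $\Gamma_0$ and $\partial_\nu \phi|_{\partial\Omega} = 0$, and set $u_1 = \nabla\phi$; elliptic regularity together with $g \in H^1(J; H^{-1}_{(0)})$ for the time derivative puts $u_1$ in the right space. To absorb the jump datum $a$, construct $u_2$ with $\Div u_2 = 0$ and $\Jump{\nu_{\Gamma_0}\cdot T(u_2, q_2)} = a$ by locally flattening $\Gamma_0$ and using the trace identification
\begin{equation*}
H^{1/4}(J;L^2(\Gamma_0)) \cap L^2(J;H^{1/2}(\Gamma_0)) = \operatorname{tr}\bigl(H^1(J;L^2)\cap L^2(J;H^2)\bigr)\big|_{\Gamma_0}.
\end{equation*}
A Bogovskii correction enforces $u_* |_{\partial\Omega}=0$, and a final subtraction of a stationary two-phase Stokes solution matches $v_0$ at $t=0$, using the compatibility $\Div v_0 = g|_{t=0}$.

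After the reduction the problem becomes $\partial_t w - \mu^\pm \Delta w + \nabla \pi = \tilde f$, $\Div w = 0$, $\Jump{w}=0$, $\Jump{\nu\cdot T(w,\pi)}=0$, $w|_{\partial\Omega}=0$, $w|_{t=0}=0$, with $\tilde f \in L^2(Q_T)^n$. I consider the bilinear form
\begin{equation*}
\mathfrak{a}(w,v) = \sum_{\pm} \int_{\Omega_0^\pm} 2\mu^\pm Dw : Dv \sd x
\end{equation*}
on $V := H^1_0(\Omega)^n \cap L^2_\sigma(\Omega)$, which is symmetric, bounded, and coercive by Korn's inequality. It defines a non-negative self-adjoint operator $A$ on $H := L^2_\sigma(\Omega)$; two-phase elliptic regularity (tangential difference quotients plus a Ne\v{c}as-type pressure estimate in each phase) gives $D(A) \subset H^2(\Omega\setminus\Gamma_0)^n$, and the transmission condition $\Jump{\nu\cdot T(w,\pi)}=0$ is encoded in the weak formulation. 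Since $-A$ generates a contraction analytic semigroup on a Hilbert space, de Simon's theorem yields maximal $L^2$-regularity on $(0, T_0)$ with a constant independent of $T$, and the real interpolation identity $(H, D(A))_{1/2,2} = V$ makes $V$ the correct trace space for the initial value $w_0 := v_0 - u_*|_{t=0}$.

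The pressure is recovered from $\nabla q = \mu^\pm \Delta u - \partial_t u + f$ in each phase (giving the $L^2$ gradient bound), and $\Jump{q}$ from the normal component of the stress jump condition; uniqueness follows by applying these estimates to the difference of two solutions. The main obstacle is the construction of the lift of $a$ in the precise anisotropic class $H^{1/4,1/2}(\Gamma_{0,T})$ while simultaneously being divergence-free and vanishing on $\partial\Omega$: this forces a careful local trace analysis on flattened charts of $\Gamma_0$ together with an auxiliary Bogovskii step to cancel the divergence produced by the naive lift, and it is the step where the specific regularity exponents in the hypothesis are tightly used.
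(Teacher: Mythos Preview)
Your overall strategy coincides with the paper's: reduce to the fully homogeneous problem $g=a=v_0=0$ by lifting, then invoke de~Simon's theorem for an analytic generator on the Hilbert space $L^2_\sigma(\Omega)$. The paper proceeds exactly this way. However, both the construction of the generator and the reduction steps differ in detail.

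For the generator, the paper does not set up the bilinear form $\mathfrak a$; it defines $A$ through the strong formulation (the pressure being fixed by an auxiliary transmission problem) and quotes the resolvent estimate of Shibata--Shimizu to obtain analyticity together with the identification of $D(A)$ with the $H^2$ two-phase space. Your form-based route is a legitimate and more self-contained alternative, but the step ``two-phase elliptic regularity gives $D(A)\subset H^2(\Omega\setminus\Gamma_0)$'' is essentially the content of the Shibata--Shimizu result at a fixed resolvent point, so you are not really bypassing that work.

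For the reductions, the paper does \emph{not} lift $g$ via a potential. Instead it solves the full one-phase instationary Stokes system with Neumann conditions in $\Omega_0^+$ (citing known maximal-regularity results), extends the solution across $\Gamma_0$, subtracts it, and then solves a one-phase Dirichlet Stokes problem in $\Omega_0^-$; these lifts automatically satisfy $u|_{\partial\Omega}=0$. Your choice $u_1=\nabla\phi$ with $\partial_\nu\phi|_{\partial\Omega}=0$ does \emph{not} vanish on $\partial\Omega$ (only its normal component does), so subtracting it destroys the Dirichlet condition there, and a Bogovski\u\i{} step will not repair this: Bogovski\u\i{} produces a field with prescribed divergence and zero trace, not a divergence-free field with a prescribed nonzero trace. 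You would need an additional divergence-free boundary lift, which is precisely what the paper's one-phase Stokes solves already deliver. For the jump datum $a$, the paper's argument is close to yours: it treats the tangential part $a_\tau$ by a trace lift on $\Omega_0^+$ followed by a Bogovski\u\i{} correction to enforce $\operatorname{div}=0$ (exactly the ``main obstacle'' you anticipate), and it absorbs the normal part $a_\nu$ directly into the pressure rather than into the velocity.
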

\begin{remark}
  The result follows from a result announced by Shimizu~\cite{ShmizuBanachCenterPubl}, where a general $L^q$-theory is discussed. In the case $q=2$, the proof is much simpler since Hilbert-space methods are available and the result basically follows from the resolvent estimate proved by Shibata and Shimizu in \cite{ShibataShimizuTwoPhase}. For the convenience of the reader we include a proof.
\end{remark}
\begin{proof*}{of Theorem~\ref{thm:MaxReg}} 
  First we consider the case $g=a=v_0=0$. We can assume without loss of generality that $f\in L_2(0,T;L_{2,\sigma}(\Omega))$. Otherwise we replace $f$ by $P_\sigma f$ and $q$ by $q-q_1$, where $\nabla q_1=f-P_\sigma f$. Then (\ref{eq:InstatStokes1})-(\ref{eq:InstatStokes5}) are equivalent to the abstract evolution equation
  \begin{eqnarray}\label{eq:AbstractEvol1}
    \frac{d}{dt} u(t) + Au(t) &=& f(t),\qquad t\in (0,T),\\\label{eq:AbstractEvol2}
u(0)&=&0,
  \end{eqnarray}
  where $A\colon \mathcal{D}(A)\to L_{2,\sigma}(\Omega)$ with
  \begin{eqnarray*}
    Au|_{\Omega^\pm}&=& -\nu^\pm \Delta u+\nabla q\\
\mathcal{D}(A) &=& \left\{u\in H^1_0(\Omega)^n\cap L_{2,\sigma}(\Omega): \nabla^2 u|_{\Omega_0^\pm}\in L^2(\Omega_0^\pm), \Jump{2\nu\cdot \mu^\pm Dv}_\tau =0\right\}
  \end{eqnarray*}
where $q\in L_{2,(0)}(\Omega)$ with $\nabla q|_{\Omega_0^\pm}\in L_2(\Omega_0^\pm)^n$ is uniquely defined by
\begin{alignat*}{2}
  \Delta q &=0&\qquad& \text{in}\ \Omega^\pm_0,\\~
  \Jump{q} &= \Jump{2\mu^\pm\partial_\nu v_\nu} &&\text{on}\ \Gamma_0,\\
  \partial_\nu q|_{\partial\Omega} &=\nu\cdot \mu^- \Delta u|_{\partial\Omega} &&\text{on}\ \partial\Omega.
\end{alignat*}
Because of \cite[Theorem~1.1]{ShibataShimizuTwoPhase}, $A$ is a generator of an exponentially decaying analytic semi-group and the graph norm $\|u\|_{\mathcal{D}(A)}$ is equivalent to
\begin{equation*}
  \|u\|_{H^1(\Omega)} +\sum_{\pm}\|\nabla^2u\|_{L_2(\Omega_0^\pm)}.
\end{equation*}
 Since $L_{2,\sigma}(\Omega)$ is a Hilbert space, for every $f\in L_2(0,T;L_{2,\sigma})$ there is a unique $u\in H^1(0,T;L_{2,\sigma})\cap L_2(0,T;\mathcal{D}(A)) $ solving (\ref{eq:AbstractEvol1})-(\ref{eq:AbstractEvol2}) and
 \begin{equation*}
   \left\|\frac{d}{dt}u\right\|_{L_2(0,T;L_{2,\sigma})}+   \left\|A u\right\|_{L_2(0,T;L_{2,\sigma})}\leq C\|f\|_{L_2(0,T;L_{2,\sigma})}
 \end{equation*}
with some $C>0$ independent of $T\in (0,\infty]$. In the case $T=\infty$ this statement follows from \cite{DeSimonMaxReg} or
\cite[Theorem 4.4]{DenkHieberPruess}, part ``(ii) implies (i)'', where we note that $\mathcal{R}$-boundedness of an operator family on a Hilbert space coincides with uniform boundedness, cf. \cite[Section 3.1]{DenkHieberPruess}. The case $T<\infty$ follows from the latter case by extending $f\colon (0,T)\to H$ by zero to $\tilde{f}\colon (0,\infty)\to H$. 
This proves the theorem in the case $g=a=v_0=0$.

The general case can be reduced to the latter case as follows: 
First we reduce to the case $(f,g,v_0)|_{\Omega_0^+}\equiv 0$. To this end let
\begin{equation*}
v^+\in H^1(0,T;L_2(\Omega_0^+)^n)\cap L_2(0,T;H^2(\Omega_0^+)^n)
,\quad q^+\in L^2(0,T;H^1(\Omega_0^+))  
\end{equation*}
be the solution of
\begin{alignat*}{2}
  \partial_t v^+ -\mu^+ \Delta v^++\nabla q^+ &= f|_{\Omega_0^+} &\quad& \text{in} \ \Omega^+_0\times (0,T),\\ 
  \Div v^+ &= g|_{\Omega_0^+} &\quad& \text{in} \ \Omega^+_0\times (0,T),
\\
\nu_{\Gamma_0}\cdot(2\mu^+ Dv^+-q^+) &= 0 && \text{on} \ \Gamma_0\times (0,T),\\
v^+|_{t=0} &= v_0|_{\Omega_0^+}  &\quad& \text{in} \ \Omega_0^+.
\end{alignat*}
The existence of such a $v^+$ follows from well known results for the instationary Stokes system with Neumann boundary conditions, cf. e.g. \cite{SolonnikovProceedings}.
Moreover, there is some constant $C>0$ such that for every $0<T\leq \infty$
\begin{eqnarray*}
    \lefteqn{\|(\partial_t v^+,\nabla v^+,\nabla^2 v^+,\nabla q^+)\|_{L_2(J;L_2(\Omega^+_0))}}\\
 &\leq& C_q\left(\|(f,\nabla g)\|_{L_2(J\times\Omega_0^+)}+\|\partial_t g\|_{L_2(J;H^{-1}(\Omega_0^+))}+\|v_0\|_{H^1(\Omega_0^+)}\right).
\end{eqnarray*} 
Now we extend $v^+$ and $q^+$ to some functions 
\begin{equation*}
 \tilde{v}^+\in L_2(0,T;H^2(\Omega_0))^n\cap H^1(0,T;L_2(\Omega_0))^n,\quad \tilde{q}^+\in L^2(0,T;H^1(\Omega_0^+)) 
\end{equation*}
 satisfying an analoguous estimate as before.
Now subtracting $(\tilde{v}^+,\tilde{q}^+)$ from $(u,q)$ we  reduce to the case $(f,g,v_0)|_{\Omega_0^+}\equiv 0$. Next we observe that 
$$g|_{\Omega_0^-}\in H^1(0,T;H^{-1}_{(0)}(\Omega_0^-))$$ 
because of 
\begin{equation*}
  \int_{\Omega_0^+} g(x,t)\varphi (x)\sd x = \int_{\Omega_0} g(x,t)\tilde{\varphi}(x)\sd x
\end{equation*}
for every $\varphi \in H^1_{(0)}(\Omega_0^+)$, where $\tilde{\varphi}\in H^1_{(0)}(\Omega_0)$ is an arbitrary extension of $\varphi$ to $\Omega_0$. Hence there are some
\begin{equation*}
v^-\in H^1(0,T;L_2(\Omega_0^-)^n)\cap L_2(0,T;H^2(\Omega_0^-)^n),\quad q^-\in L^2(0,T;H^1(\Omega_0^-))  
\end{equation*}
solving
\begin{alignat*}{2}
  \partial_t v^- -\mu^- \Delta v^-+\nabla q^- &= f|_{\Omega_0^-} &\quad& \text{in} \ \Omega^-_0\times (0,T),\\ 
  \Div v^- &= g|_{\Omega_0^-} &\quad& \text{in} \ \Omega^-_0\times (0,T),
\\
v^- &= 0 && \text{on} \ \Gamma_0\times (0,T),\\
v^-|_{t=0} &= v_0|_{\Omega_0^-}  &\quad& \text{in} \ \Omega_0^-.
\end{alignat*}
Existence of such a solution together with analoguous estimates as for $(v^+,q^+)$ follows e.g. from \cite{SolonnikovProceedings,FroehlichMaxReg,HieberEtAlStokesMaxRegShort,GigaSohrAC, SolonnikovStokesMaxReg}. Now extending $v^-$ and $q^-$ by zero to $\Omega_0$ and subtracting the extensions from $(u,p)$ we can reduce to the case $(f,g,v_0)\equiv 0$.

In order to reduce to the case, where also $a_\tau\equiv 0$, we construct some $A\in H^1(0,T;L_2(\Omega_0^+))\cap L_2(0,T;H^2(\Omega_0^+))$ such that
\begin{equation*}
  \|(A,\partial_t A, \nabla A, \nabla^2 A)\|_{L_2(Q_T^+)}\leq C\|a\|_{H^{\frac14,\frac12}(\Gamma_{0,T})}
\end{equation*}
and
\begin{equation*}
  A|_{t=0}=A|_{\Gamma_0}=0,\quad (\nu_{\Gamma_0} \cdot 2\mu^+ DA)_\tau|_{\Gamma_0}=a_\tau, \quad \Div A=0\quad \text{in}\ \Omega_0.
\end{equation*}
This can be done as follows: Choose some 
$$
\tilde{A}\in H^1(0,T;L_2(\Omega_0^+)^n)\cap L_2(0,T;H^2(\Omega_0^+)^n)
$$ 
such that
\begin{equation*}
  \|(\tilde{A},\partial_t \tilde{A}, \nabla \tilde{A}, \nabla^2 \tilde{A})\|_{L_2(\Omega_0^+\times (0,T))}\leq C\|a\|_{H^{\frac14,\frac12}(\Gamma_{0,T})}
\end{equation*}
and
\begin{equation*}
  \tilde{A}|_{\Gamma_0}=\tilde{A}|_{t=0}=0,\quad (\nu_{\Gamma_0} \cdot 2\mu^+ D\tilde{A})_\tau|_{\Gamma_0}=a_\tau,\quad \Div \tilde{A}|_{\Gamma_0}=0.
\end{equation*}
The existence of such an $\tilde{A}$ e.g. follows from \cite[Lemma~2.4]{SolonnikovProceedings}. Moreover, $C>0$ in the estimate above can be chosen independently of $0<T\leq T_0$ for any $T_0>0$.
Since $\Div \tilde{A}|_{\Gamma_0}=0$, $\Div \tilde{A}\in H^1_0(\Omega_0^+)$ and we can apply the Bogovski operator $B$, cf. e.g. \cite{Galdi}, to $\Div \tilde{A}$. Then we obtain $B(\Div \tilde{A})\in L^2(J;H^2_0(\Omega_0^+)\cap L^2_{(0)}(\Omega_0^+))$ and
\begin{equation*}
  \|B(\Div \tilde{A})\|_{L_2(J;H^2(\Omega_0^+))}\leq C\|\tilde{A}\|_{L_2(J;H^2(\Omega_0^+))}.
\end{equation*}
Moreover, due to \cite[Theorem~2.5]{GeissertHeckHieberBogovski} we also have
\begin{equation*}
  \|B(\Div \tilde{A})\|_{H^1(J;L_2(\Omega_0^+))}\leq C\|\Div \tilde{A}\|_{H^1(J;H^{-1}_{(0)}(\Omega_0^+))} \leq C'\|\tilde{A}\|_{H^1(J;L^2(\Omega_0^+))}. 
\end{equation*}
Since the Bogovski operator is independent of time, the latter constant can be chosen independently of $0<T\leq T_0$ for any $T_0>0$.
Altogether, we obtain that $A:= \tilde{A}- B(\Div \tilde{A})$ has the properties stated above. Replacing  $u$ by $u-A\chi_{\Omega_0^+}$, we can finally reduce to the case $v_0\equiv g\equiv a_\tau\equiv 0$.
Finally, we can also reduce to the case $a_\nu \equiv 0$ by substracting a suitable extension of $a_\nu$ from the pressure $q$.
\end{proof*}

\noindent
{\bf Acknowledgments:} We are grateful to the anonymous referees and Stefan Schau\-beck for careful reading previous versions of this work and many comments, which improved the paper. Moreover, the authors acknowledge support from the German
Science Foundation through Grant Nos. AB285/3-1 and AB285/4-1.

M.W. would like to express his thanks to Gieri Simonett for inspiring discussions concerning the proof of the stability result.

\bibliographystyle{amsplain}

\def\cprime{$'$} 
\providecommand{\bysame}{\leavevmode\hbox to3em{\hrulefill}\thinspace}
\providecommand{\MR}{\relax\ifhmode\unskip\space\fi MR }
\providecommand{\MRhref}[2]{%
  \href{http://www.ams.org/mathscinet-getitem?mr=#1}{#2}
}
\providecommand{\href}[2]{#2}

\end{document}